\documentclass[leqno,11pt,a4paper]{amsart}

\usepackage{jc}

\newcommand{\limitsto}{\dashrightarrow}

\newcommand{\NW}{\on{NW}}

\begin{document}

\author{Julian Chaidez}
\address{Department of Mathematics\\University of Southern California\\Los Angeles, CA\\90007\\USA}
\email{julian.chaidez@usc.edu}

\title[Conformally Symplectic Topology From A Dynamical Viewpoint]{Conformally Symplectic Topology From A Dynamical Viewpoint}

\begin{abstract} This survey article discusses the emerging interaction between conformally symplectic topology and dynamics, with a focus on recent developments in convex hypersurface theory. 
\end{abstract}

\vspace*{-10pt}

\maketitle

%\tableofcontents

\vspace*{-20pt}

\section{Introduction} \label{sec:introduction}

Since the emergence of Floer theory in the 1980s, the interplay between symplectic topology and dynamics has been a fundamental theme of symplectic geometry. The first achievement in this spirit was Floer's celebrated proof of the Arnold Conjecture for aspherical symplectic manifolds \cite{floer1988morse,floer1988unregularized,floer1995transversality}. In subsequent decades, many remarkable results in symplectic dynamics have been achieved by applying Floer-theoretic methods. Examples include the proof of the Conley conjecture \cite{ginzburg2010conley} for aspherical symplectic manifolds; the proof of the Hofer-Zehnder conjecture \cite{shelukhin2022hofer} for monotone symplectic manifolds with semi-simple quantum cohomology; the proof of the Weinstein conjecture in dimension three \cite{taubes2007seiberg,hutchings2009weinstein} and in other settings \cite{floer1990weinstein,albers2009weinstein}; and smooth closing lemmas in symplectic dynamics \cite{i2015,ai2016,eh2021,cpz2021,chaidez2022contact}. This list, by no means comprehensive, highlights the remarkable dynamical applications of Floer theoretic methods.

\vspace{3pt}

Classical symplectic and Hamiltonian dynamics deals with maps and flows that preserve a symplectic form (e.g. Hamiltonian diffeomorphisms and flows) or that conserve a contact form (e.g. Reeb flows). However, there is a natural, broader class of dynamical systems in symplectic topology that has (until recently) attracted much less attention from the symplectic community. 

\begin{definition*} A diffeomorphism $\Phi$ on a manifold $M$ is \emph{conformally symplectic} if it preserves a symplectic (or Liouville, or contact) form $\Omega$ on $M$ up to a scaling factor.
\[
\Phi^*\Omega = F \cdot \Omega \qquad\text{for a smooth function }F:M \to \R_+
\]
\end{definition*}
\noindent Conformally symplectic dynamics is not nearly as well studied as its strictly symplectic counterpart. Relatively little is is known about the properties of these systems, e.g. their fixed points, invariant sets, ergodic theory and so on. However, conformally symplectic dynamics is quite ubiquitous, appearing naturally in many parts of symplectic topology. Prototypical examples include the Liouville flow on an exact symplectic manifold, contactomorphisms and the flow of the characteristic foliation on hypersurfaces in contact manifolds. Moreover, several interesting questions of a purely topological nature about Liouville manifolds, contact manifolds and convex hypersurfaces are closely related to dynamical questions in conformally symplectic dynamics. 

\vspace{3pt}

The interaction between conformally symplectic topology and dynamics therefore represents exciting and largely uncharted mathematical territory. This short article serves as an invitation to the subject, with a focus on the specific setting of characteristic foliations. We start by introducing contact Hamiltonian manifolds and their characteristic foliations (Section \ref{sec:contact_Hamiltonian_manifolds}). These are intrinsic versions of hypersurfaces in contact manifolds, and they give rise to many of the most natural examples of conformally symplectic dynamical systems that the author is acquainted with. We then discuss convexity in the sense of Giroux \cite{g1991} in this intrinsic language, and give an overview of recent developments in the problem of convex approximation from the dynamical perspective (Section \ref{sec:convexity}). We conclude with a discussion of several conjectures and open problems (Section \ref{sec:questions_and_conjectures}).

\subsection*{Acknowledgements} This article was written for the Proceedings of the 2026 Georgia International Topology Conference. The author was partially supported by National Science Foundation award DMS-2446019 and by the US-Israel Binational Science Foundation award 2024157. This article is dedicated to the memory of Dietmar Salamon, one of the most important mathematical influences on the author.

\newpage

\section{Contact Hamiltonian Manifolds And Characteristic Flows} \label{sec:contact_Hamiltonian_manifolds}

Contact Hamiltonian manifolds are the natural intrinsic models for hypersurfaces in contact manifolds. They are directly analogous to Hamiltonian manifolds, which model hypersurfaces in symplectic manifolds (cf. Fish-Hofer \cite{fish2023feral}). Moreover, they possess a natural dynamical system called the characteristic foliation, which will be the main object of interest in this survey. In this section, we introduce contact Hamiltonian manifolds and their various associated structures. 

\subsection{Contact Hamiltonian Structures} We start by giving a precise definition and some examples. 

\begin{definition}[Contact Hamiltonian Form] \label{def:contact_Hamiltonian_form} A \emph{contact Hamiltonian form} $\lambda$ on a $2n$-manifold $\Sigma$ is a smooth 1-form such that
\begin{equation} \label{eq:contact_Hamiltonian_form}
d\lambda^n \neq 0 \text{ at any point where $\lambda \wedge d\lambda^{n-1}$ vanishes.}
\end{equation}
\end{definition}

\begin{lemma}[Rescaling Invariance] Let $\lambda$ be a contact Hamiltonian form on $\Sigma$ and let $f$ be a smooth nowhere vanishing function on $\Sigma$. Then $f\lambda$ is also contact Hamiltonian.
\end{lemma}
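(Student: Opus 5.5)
Let $\mu = f\lambda$. The approach is a direct pointwise computation of $\mu\wedge d\mu^{n-1}$ and $d\mu^n$ in terms of $\lambda$, followed by checking condition \eqref{eq:contact_Hamiltonian_form} at each point.

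\textbf{Expanding the forms.} We have $d\mu = df\wedge\lambda + f\,d\lambda$. Since $df\wedge\lambda$ is a decomposable 2-form, its square vanishes, and it commutes with $d\lambda$. The binomial expansion therefore truncates, giving
\[
d\mu^k = f^k\,d\lambda^k + k f^{k-1}\, df\wedge\lambda\wedge d\lambda^{k-1}.
\]
Wedging with $\mu = f\lambda$ kills the second term because $\lambda\wedge\lambda=0$. Hence
\[
\mu\wedge d\mu^{n-1} = f^n\,\lambda\wedge d\lambda^{n-1}.
\]
Taking $k=n$ gives
\[
d\mu^n = f^n\,d\lambda^n + n f^{n-1}\,df\wedge\lambda\wedge d\lambda^{n-1}.
\]

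\textbf{Checking the condition.} Fix a point $p$ where $\mu\wedge d\mu^{n-1}$ vanishes. Since $f(p)\neq 0$, the first identity shows that $\lambda\wedge d\lambda^{n-1}$ also vanishes at $p$. The second summand of $d\mu^n$ then vanishes at $p$, since it contains $\lambda\wedge d\lambda^{n-1}$ as a factor. So $d\mu^n(p) = f(p)^n\,d\lambda^n(p)$. This is nonzero because $\lambda$ is contact Hamiltonian and $f(p)\neq0$, which proves the lemma.

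\textbf{Main point of care.} There is no real obstacle here. The one step to handle carefully is the binomial expansion: it is valid because 2-forms commute, and it truncates because $(df\wedge\lambda)^2=0$. Everything else is pointwise algebra.
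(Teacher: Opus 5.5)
Your proof is correct and is essentially the paper's argument: the paper simply records the identities $f\lambda\wedge d(f\lambda)^{n-1} = f^n\,\lambda\wedge d\lambda^{n-1}$ and $d(f\lambda)^n = f^n\,d\lambda^n$ on $\ker(\lambda)$, meaning at the singular points. Your full expansion of $d(f\lambda)^n$ shows the correction term vanishes at every point where $\lambda\wedge d\lambda^{n-1}$ vanishes, so you avoid using the true but unstated fact that these points are exactly the zeros of $\lambda$.
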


\begin{proof} This is immediate from Definition \ref{def:contact_Hamiltonian_form} and the following identities.
\[f\lambda \wedge d(f\lambda)^{n-1} = f^n \lambda \wedge d\lambda^{n-1}
 \qquad \text{and}\qquad d(f\lambda)^n|_{\on{ker}(\lambda)} = f^n \cdot d\lambda^n|_{\on{ker}(\lambda)} \qedhere\]
\end{proof}

\begin{definition}[Singular Hyperplane Field] A \emph{singular hyperplane field} $\eta$ on a manifold $\Sigma$ is an equivalence class of 1-forms up to multiplication by a nowhere zero smooth function. We let
\[
\on{ker}(\lambda) \qquad \text{denote the equivalence class of a 1-form $\lambda$}
\]
A \emph{singularity} of a singular hyperplane field $\eta$ is a point where the defining 1-form $\lambda$ vanishes. \end{definition}
 
\begin{definition}[Contact Hamiltonian Manifold] \label{def:contact_Hamiltonian_manifold} A \emph{contact Hamiltonian manifold} $(\Sigma,\eta)$ is a $2n$-manifold $\Sigma$ equipped with a singular hyperplane field $\eta$ defined by a contact Hamiltonian form.\end{definition}

\begin{remark} This definition is more general than the version given in the authors work \cite{jc2024}, where contact Hamiltonian manifolds are synonymous with even contact manifolds. 
\end{remark}

\begin{figure}[h]
    \centering
    \includegraphics[width=.9\linewidth]{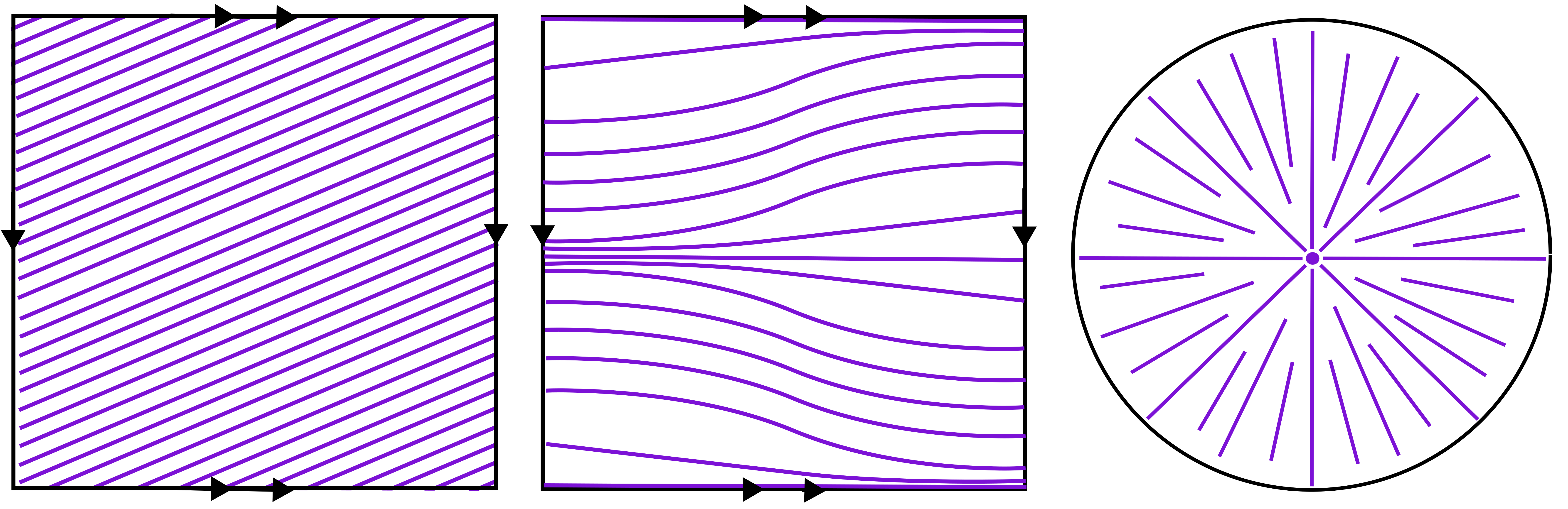}
    \caption{Examples of contact Hamiltonian structures on the 2-torus and the disk. }
    \label{fig:contact_Hamiltonian_examples}
\end{figure}

\noindent Here are the motivating examples of contact Hamiltonian manifolds that arise in practice.

\begin{example}[Hypersurfaces] \label{ex:hypersurfaces} Fix a hypersurface $\Sigma$ in a contact manifold $(Y,\xi)$ with contact form $\alpha$. Then $\Sigma$ is a contact Hamiltonian manifold with contact Hamiltonian structure
\[
\eta = \xi \cap T\Sigma \qquad\text{defined by the contact Hamiltonian form}\qquad \lambda = \alpha|_\Sigma
\]
Indeed, note that $\lambda \wedge d\lambda^{n-1}$ vanishes at a point in $\Sigma$ if and only if $\lambda$ vanishes at that point. In this case $\xi = T\Sigma$ at that point and thus $d\lambda|_{T\Sigma}$ is symplectic. \end{example}

\begin{example}[Exact Symplectic Manifolds] \label{ex:exact_symplectic} Any exact symplectic manifold $(\Sigma,\lambda)$ is contact Hamiltonian with contact Hamiltonian form $\lambda$ since $d\lambda^n$ is nowhere vanishing. \end{example}

\begin{example}[Even Contact Manifolds] \label{ex:even_contact} Recall that an even contact manifold $(\Sigma,\eta)$ is a $2n$-manifold with a maximally non-degenerate (non-singular) hyperplane field $\eta$. Equivalently
\[
\eta = \on{ker}(\lambda) \qquad\text{where}\qquad \lambda \wedge d\lambda^{n-1} \text{ is nowhere vanishing}
\]
Thus an even contact manifold is precisely a contact Hamiltonian manifold where $\eta$ is nowhere singular.\end{example}

\begin{example}[Symplectization] \label{ex:symplectization} The symplectization $\R_s \times \Gamma$ of a contact manifold $(\Gamma,\xi)$ is naturally a contact Hamiltonian manifold with contact Hamiltonian structure
\[
\eta = \on{span}(\partial_s) \oplus \xi
\]
This is a special case of both Example \ref{ex:exact_symplectic} and Example \ref{ex:even_contact}. \end{example}

\begin{example}[Suspensions] \label{ex:suspensions} Let $\Phi$ be a contactomorphism of a contact manifold $(\Gamma,\xi)$. Recall that the suspension of $\Phi$ is the quotient
\[\Sigma(\Phi) = (\R_s \times \Gamma)/\sim\]
by the identification sending $(s,x)$ to the point $(s-1,\Phi(x))$. This preserves the contact Hamiltonian structure on $\R_s \times \Gamma$, which then descends to the suspension. \end{example}

\begin{exercise} Find a contact Hamiltonian 1-form defining each of the 1-dimensional singular hyperplane fields (or in this case, line fields) depicted in Figure \ref{fig:contact_Hamiltonian_examples}.
\end{exercise}

We are often interested in understanding contact Hamiltonian structures up to deformation (or equivalently, isotopy). There are a two different notions of isotopy that are important.

\begin{definition}[Isotopies] An isotopy $\eta_s$ of contact Hamiltonian structures on a manifold $\Sigma$ is simply a smooth family of contact Hamiltonian structures
\[
\eta_s = \on{ker}(\lambda_s) \qquad\text{for $s \in [0,1]$}
\]
where $\lambda_s$ is a smooth family of contact Hamiltonian forms. An isotopy is \emph{ambient} in a contact manifold $(Y,\xi)$ if there is a family of codimension one embeddings such that
\[
\iota_s:\Sigma \to Y \qquad\text{such that}\qquad \eta_s = \iota_s^*\xi
\]\end{definition}

\begin{remark} With respect to isotopy, contact Hamiltonian structures behave more like contact forms and Liouville forms than symplectic forms and contact structures. In particular, there is no Moser stability or Grey stability that guarantees that isotopic contact Hamiltonian structures are strictly isomorphic.  \end{remark}

\subsection{Characteristic Foliation} A contact Hamiltonian manifold has a naturally associated singular line field, which generates a naturally associated smooth flow. 

\begin{definition}[Characteristic Flow] The \emph{characteristic vector field} $Z$ determined by a contact Hamiltonian form $\lambda$ and a volume form $\mu$ is the unique vector field satisfying
\begin{equation} \label{eq:characteristic_vectorfield}
\iota_Z\mu = \lambda \wedge d\lambda^{n-1}
\end{equation}
A flow acquired by integrating a characteristic vector field is called a \emph{characteristic flow}. \end{definition}

The characteristic vector field depends on the choice of contact Hamiltonian form and volume form, but it determines a well-defined (singular) line field, in the following sense.

\begin{definition}[Singular Line Field] A \emph{singular line field} $L$ on a manifold $\Sigma$ is an equivalence class of vector fields up to multiplication by a nowhere zero smooth function. We let
\[
\on{span}(V) \qquad \text{denote the equivalence class of a vector field $V$}
\]
A \emph{singularity} of a singuar line field $L$ is a point where the defining vector field vanishes.\end{definition}

\begin{definition}[Characteristic Foliation] The \emph{characteristic foliation} $\Sigma_\eta$ of a contact Hamiltonian manifold $(\Sigma,\eta)$ is the unique singular line field spanned by any characteristic vector field. \end{definition}

\noindent The well-definedness of the characteristic foliation is given by the following elementary lemma.

\begin{lemma}[Well-Defined] The singular line field spanned by a characteristic vector field $Z$ of a contact Hamiltonian manifold $(\Sigma,\eta)$ is independent of the choice of contact Hamiltonian form and volume form.
\end{lemma}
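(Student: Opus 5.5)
The plan is to compare two choices directly. Fix a contact Hamiltonian manifold $(\Sigma,\eta)$. Let $(\lambda,\mu)$ and $(\lambda',\mu')$ be two pairs of a contact Hamiltonian form defining $\eta$ and a volume form. We want to show that the corresponding characteristic vector fields $Z$ and $Z'$ differ by multiplication by a nowhere vanishing smooth function.

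By the definition of singular hyperplane field, $\lambda' = f\lambda$ for a nowhere zero smooth function $f$. Since $\Sigma$ carries volume forms $\mu,\mu'$ (implicitly oriented, or at least locally so), we can also write $\mu' = g\mu$ with $g$ nowhere zero and smooth. The key computation is the identity already used in the proof of the Rescaling Invariance lemma:
\[
\lambda' \wedge d\lambda'^{n-1} = f\lambda \wedge (f\,d\lambda + df\wedge\lambda)^{n-1} = f^n\, \lambda\wedge d\lambda^{n-1}.
\]
The identity holds because every term in the binomial expansion containing $df\wedge\lambda$ is killed by wedging with the leading $\lambda$ (since $\lambda\wedge\lambda=0$, and $2$-forms commute).

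Next we compare the defining equations. By \eqref{eq:characteristic_vectorfield},
\[
\iota_{Z'}\mu' = f^n\,\lambda\wedge d\lambda^{n-1} = f^n\,\iota_Z\mu .
\]
The left side equals $\iota_{Z'}(g\mu) = g\,\iota_{Z'}\mu$, so $\iota_{Z'}\mu = \iota_{(f^n/g) Z}\mu$. Since $\mu$ is a volume form, the map $V\mapsto \iota_V\mu$ from vector fields to $(2n-1)$-forms is a pointwise linear isomorphism. Hence
\[
Z' = \frac{f^n}{g}\, Z ,
\]
and $f^n/g$ is smooth and nowhere vanishing. Therefore $\on{span}(Z')=\on{span}(Z)$ as singular line fields, with the same singularities.

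There is no serious obstacle. The only points requiring care are the vanishing of the cross terms in the expansion above, and the remark that the nondegeneracy of $\iota_{(\cdot)}\mu$ gives both existence and uniqueness of $Z$. If $\Sigma$ is non-orientable, the same argument applies locally, with $\mu$ understood as a local volume form or density, and the local rescaling factors patch because they are determined pointwise.
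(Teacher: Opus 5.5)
Your proof is correct and takes the same approach as the paper: write $\lambda' = f\lambda$ and $\mu' = g\mu$, and observe that $\iota_{FZ}\mu' = \lambda' \wedge (d\lambda')^{n-1}$ with $F = f^n/g$, so the two characteristic fields differ by a nowhere vanishing factor. You also spell out two points the paper leaves implicit: why the $df\wedge\lambda$ cross terms vanish, and why $V \mapsto \iota_V\mu$ is a pointwise isomorphism.
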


\begin{proof} Let $Z$ be the characteristic vector field for a contact Hamiltonian form $\lambda$ and volume form $\mu$, and let $\lambda' = f\lambda$ and $\mu' = g\mu$ for nowhere vanishing functions $f$ and $g$. Then
\[
\iota_{FZ}\mu' = \lambda' \wedge (d\lambda')^{n-1} \qquad\text{where}\qquad F = f^n/g 
\]
Thus $FZ$ is the characteristic vector field of $\lambda'$ and $\mu'$. This proves the lemma.
\end{proof}

A similar argument shows that any spanning vector field of the characteristic foliation is characteristic for some contact Hamiltonian form.

\begin{lemma}[Spanning Vector Fields] \label{lem:spanning_vector_fields} Fix a contact Hamiltonian manifold $(\Sigma,\eta)$ and a volume form $\mu$. Then any vector field $Z$ spanning $\Sigma_\eta$ satisfies (\ref{eq:characteristic_vectorfield}) for some contact Hamiltonian form $\lambda$.
\end{lemma}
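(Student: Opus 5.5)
The plan is to reduce to the computation already made in the proof of the Well-Defined lemma. Fix a contact Hamiltonian form $\lambda_0$ with $\eta = \on{ker}(\lambda_0)$, and let $Z_0$ be the characteristic vector field of $\lambda_0$ and $\mu$. A vector field $Z$ spanning $\Sigma_\eta$ lies in the same equivalence class as $Z_0$. So I first write
\[
Z = h\, Z_0 \qquad\text{for a smooth nowhere vanishing function } h:\Sigma \to \R.
\]
Next I look for the desired form among rescalings $\lambda = f\lambda_0$ with $f$ nowhere vanishing. By the Rescaling Invariance lemma, any such $\lambda$ is again contact Hamiltonian, and it still defines $\eta$.

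The identity from the proof of the Well-Defined lemma, taken with $g = 1$ since $\mu$ is held fixed, gives
\[
\iota_{f^n Z_0}\mu = (f\lambda_0)\wedge d(f\lambda_0)^{n-1}.
\]
Concretely, $d(f\lambda_0) = df\wedge\lambda_0 + f\,d\lambda_0$, and every term containing $df\wedge\lambda_0$ is killed after wedging with $f\lambda_0$. So the whole problem reduces to solving $f^n = h$ for a smooth nowhere vanishing $f$.

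This root extraction is the only real step, and I expect the sign of $h$ to be the main obstacle.
\begin{itemize}
\item On a component where $h > 0$, take $f = h^{1/n}$. This is smooth because $t \mapsto t^{1/n}$ is smooth on $(0,\infty)$ and $h$ is bounded away from $0$ locally.
\item If $n$ is odd, take $f = \on{sign}(h)\,|h|^{1/n}$ in all cases. This is smooth since $h$ has constant sign on each connected component.
\item If $n$ is even and $h < 0$ on some component, then $f^n = h$ has no real solution. On that component no rescaling of $\lambda_0$ works, because rescaling always multiplies the characteristic field by the positive factor $f^n$.
\end{itemize}

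In the last case I would first try to show the lemma is meant with $Z$ \emph{positively} spanning $\Sigma_\eta$, i.e.\ with the orientation of the characteristic foliation induced by $\mu$. Equivalently, one can allow the replacement $\mu \mapsto -\mu$ on that component, which flips the sign and reduces to the case $h > 0$. I would state this caveat explicitly and then carry out the argument componentwise as above. The proof concludes that $\lambda = f\lambda_0$ is a contact Hamiltonian form defining $\eta$ with $\iota_Z\mu = \lambda\wedge d\lambda^{n-1}$, which is (\ref{eq:characteristic_vectorfield}).
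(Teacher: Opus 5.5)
This is the argument the paper has in mind: it gives no separate proof, only the remark that the rescaling computation from the Well-Defined lemma carries over (there $f\lambda$ and $g\mu$ multiply the characteristic field by $f^n/g$), and you run that computation with $g=1$. Your sign caveat is correct and is a point the paper passes over. With $\mu$ fixed and $n$ even, every form defining $\eta$ has the form $f\lambda_0$ and produces the field $f^n Z_0$ with $f^n>0$, so a negative multiple of $Z_0$ is never characteristic for $\mu$. The statement therefore needs $Z$ to span positively, or needs $\mu$ to be allowed to change sign, as the factor $g$ permits in the Well-Defined lemma.
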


The characteristic foliation can be described as the kernel of a certain maximally non-degenerate bilinear form at any of the non-singular points.

\begin{lemma}[Kernel Description] \label{lem:ker_description} The singularities of the characteristic foliation $\Sigma_\eta$ of a contact Hamiltonian manifold $(\Sigma,\eta)$ agree with the singularities of $\eta$. Away from the singularities
\begin{equation} \label{eq:ker_equals_char_foliation}
\Sigma_\eta = \on{ker}(d\lambda|_\eta) \subset \eta\qquad\text{for any contact Hamiltonian form $\lambda$}
\end{equation}
\end{lemma}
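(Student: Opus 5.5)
The plan is to work pointwise, using the defining equation \eqref{eq:characteristic_vectorfield} together with linear algebra of $2$-forms. Fix a contact Hamiltonian form $\lambda$ and a volume form $\mu$, and let $Z$ be the characteristic vector field. Since $\mu$ is a volume form, $Z_p=0$ exactly when $(\lambda\wedge d\lambda^{n-1})_p=0$. So the first claim reduces to showing that $\lambda\wedge d\lambda^{n-1}$ vanishes at $p$ if and only if $\lambda_p=0$.

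\emph{Singularities.} One direction is trivial: if $\lambda_p=0$ then $\lambda\wedge d\lambda^{n-1}$ vanishes at $p$. Conversely, suppose $\lambda\wedge d\lambda^{n-1}$ vanishes at $p$ but $\lambda_p\neq 0$. By \eqref{eq:contact_Hamiltonian_form}, $\omega=d\lambda_p$ is then symplectic on $T_p\Sigma$, so there is a vector $V\neq 0$ with $\iota_V\omega=\lambda_p$. This gives
\[
\iota_V(\omega^n)=n\,\lambda_p\wedge\omega^{n-1}.
\]
The left side is nonzero, because $\omega^n$ is a volume form and $V\neq0$. This contradicts the vanishing of $\lambda\wedge d\lambda^{n-1}$ at $p$, so the singularities of $\Sigma_\eta$ and of $\eta$ agree.

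\emph{Kernel description.} Now take $p$ with $\lambda_p\neq0$, so that $\lambda\wedge d\lambda^{n-1}\neq0$ at $p$. First I show that $Z\in\eta$. Since $\iota_Z\iota_Z\mu=0$,
\[
0=\iota_Z(\lambda\wedge d\lambda^{n-1})=\lambda(Z)\,d\lambda^{n-1}-(n-1)\,\lambda\wedge\iota_Zd\lambda\wedge d\lambda^{n-2}.
\]
Wedging with $\lambda$ kills the second term and leaves $\lambda(Z)\,\lambda\wedge d\lambda^{n-1}=0$, so $\lambda(Z)=0$. Substituting back, $\lambda\wedge(\iota_Zd\lambda)\wedge d\lambda^{n-2}=0$.

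Next I pass to $\eta$. Choose $w$ with $\lambda(w)=1$, contract the last identity with $w$, and restrict to $\eta_p$; the terms containing $\lambda$ vanish there. Writing $\omega=d\lambda|_{\eta_p}$ and $\alpha=\iota_Z\omega$, this gives
\[
\alpha\wedge\omega^{n-2}=0\qquad\text{on }\eta_p.
\]
Nonvanishing of $\lambda\wedge d\lambda^{n-1}$ at $p$ forces $\omega^{n-1}\neq0$ on the $(2n-1)$-dimensional space $\eta_p$. Hence $\omega$ has rank exactly $2n-2$, and $\ker\omega$ is a line.

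\emph{Main step.} The step requiring care is concluding $\alpha=0$, which is linear algebra. The form $\alpha=\iota_Z\omega$ annihilates $\ker\omega$, so both $\alpha$ and $\omega$ descend to the $(2n-2)$-dimensional quotient $\eta_p/\ker\omega$, on which $\omega$ is symplectic. There, wedging with $\omega^{n-2}$ is injective on $1$-forms; this is the Lefschetz-type fact, provable by the same contraction trick as above. Therefore $\alpha=0$, i.e.\ $Z_p\in\ker(d\lambda|_\eta)$. Since $Z_p\neq0$ and this kernel is one-dimensional, $Z_p$ spans it, which gives \eqref{eq:ker_equals_char_foliation}. The choice of $\lambda$ does not matter, by rescaling invariance and the well-definedness lemma.
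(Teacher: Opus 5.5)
Your proof is correct, but it establishes the key inclusion in the opposite direction from the paper. You show $\mathrm{span}(Z_p)\subset\ker(d\lambda|_{\eta_p})$. This costs you three steps: the computation $\lambda(Z)=0$, the rank count from $\omega^{n-1}\neq 0$, and the linear Lefschetz injectivity of $\alpha\mapsto\alpha\wedge\omega^{n-2}$ on $\eta_p/\ker\omega$.

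The paper instead starts from a nonzero $W\in\ker(d\lambda|_{\eta_p})$. Then $\iota_W\lambda=0$, and $\iota_W d\lambda$ vanishes on $\eta_p=\ker\lambda_p$, so it is a multiple of $\lambda$. Hence $\iota_W(\lambda\wedge d\lambda^{n-1})=0$ at once. Since $\lambda\wedge d\lambda^{n-1}=\iota_Z\mu$ with $\mu$ a volume form, $\iota_W\iota_Z\mu=0$ forces $W$ to be proportional to $Z$. That direction needs no Lefschetz step. The rank-one property is not really an input either: a $2$-form on the odd-dimensional space $\eta_p$ always has a nonzero kernel, and every nonzero kernel vector is then a multiple of $Z_p\neq 0$.

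What your version buys is an explicit justification of two facts the paper only asserts. The first is that $\lambda\wedge d\lambda^{n-1}$ vanishes exactly where $\lambda$ does; your contraction with $V$ satisfying $\iota_V d\lambda_p=\lambda_p$ is the right argument. The second is that the kernel is a line. You also get a direct proof that $Z$ is tangent to $\eta$.

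One small point: your main step involves $\omega^{n-2}$ and so needs $n\ge 2$. For surfaces the conclusion is immediate, because $\eta_p$ is then a line containing $Z_p\neq 0$ and $d\lambda|_{\eta_p}=0$.
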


\begin{proof} Fix a characteristic vector field $Z$ defined by $\lambda$ and a volume form $\mu$. For the first claim, note that $Z$ vanishes if and only if $\lambda \wedge d\lambda^{n-1}$ vanishes by (\ref{eq:characteristic_vectorfield}), and that $\lambda \wedge d\lambda^{n-1}$ vanishes if and only if $\lambda$ vanishes by (\ref{eq:contact_Hamiltonian_form}). For the second claim, note that the restriction of $d\lambda$ to $\eta = \on{ker}(\lambda)$ must have a rank one kernel at any point where $\lambda \wedge d\lambda^{n-1}$ is non-zero. Moreover, any non-zero vector $W$ tangent to this kernel satisfies
\begin{equation} \label{eq:char_vectorfield_f_version}
\iota_W(\lambda \wedge d\lambda^{n-1}) = \iota_W\lambda \cdot d\lambda^{n-1} + \lambda \wedge \iota_W d\lambda \wedge d\lambda^{n-2} = 0
\end{equation}
It follows that $Z$ and $W$ are proportional at any non-singular point of $\eta$, which proves (\ref{eq:ker_equals_char_foliation}). \end{proof}

\noindent There is also a corresponding description of any characteristic vector field.

\begin{lemma} \label{lem:ker_lambda_description} Fix a contact Hamiltonian manifold $(\Sigma,\eta)$ with contact Hamiltonian form $\lambda$. Then a vector field $Z$ is characteristic if and only if there is a smooth nowhere vanishing function $f$ such that $\iota_Zd\lambda = f\lambda$. \end{lemma}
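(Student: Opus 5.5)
The plan is to reduce both directions to the single identity
\[
\iota_Z(d\lambda^n) = n\,\iota_Z d\lambda \wedge d\lambda^{n-1},
\]
together with the description of $\Sigma_\eta$ in Lemma \ref{lem:ker_description}. Here ``characteristic'' means that $\iota_Z\mu = \lambda\wedge d\lambda^{n-1}$ for some volume form $\mu$, with $\lambda$ fixed. By the well-definedness lemma, changing $\mu$ by a positive function rescales $Z$, so the property is insensitive to that choice. The statement is pointwise, so I would split $\Sigma$ into two regions: the singular set $\{\lambda = 0\}$, and its complement, where $\lambda\wedge d\lambda^{n-1}\neq 0$ by (\ref{eq:contact_Hamiltonian_form}) and Lemma \ref{lem:ker_description}.

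\emph{Forward direction.} Suppose $\iota_Z\mu = \lambda\wedge d\lambda^{n-1}$.
\begin{itemize}
\item At a non-singular point, Lemma \ref{lem:ker_description} gives $Z\in \eta$ and $Z\in\ker(d\lambda|_\eta)$. Hence $\iota_Z d\lambda$ vanishes on $\eta = \ker\lambda$, so $\iota_Z d\lambda = f\lambda$ pointwise for a unique scalar $f$.
\item To see that $f$ is smooth and to identify it, I would contract $d\lambda^n$ with $Z$:
\[
\iota_Z d\lambda^n = n f\,\lambda\wedge d\lambda^{n-1} = n f\,\iota_Z\mu .
\]
Since $Z\neq 0$ there, and top forms are detected by contraction with a nonzero vector, this gives $d\lambda^n = n f\mu$.
\item So I would define $f := d\lambda^n/(n\mu)$ globally. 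This is manifestly smooth.
\item On the singular set, $Z = 0$ and $\lambda = 0$, so $\iota_Z d\lambda = f\lambda$ holds trivially. Moreover $f\neq 0$ there by (\ref{eq:contact_Hamiltonian_form}).
\end{itemize}

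\emph{Converse.} Given $\iota_Z d\lambda = f\lambda$ with $f$ nowhere zero, I would set
\[
\mu := \frac{1}{nf}\, d\lambda^n .
\]
The identity above then gives $\iota_Z\mu = \lambda\wedge d\lambda^{n-1}$. This is exactly (\ref{eq:characteristic_vectorfield}), provided $\mu$ is a volume form.

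\emph{Main obstacle: nondegeneracy of $d\lambda^n$.} This is the step where nondegeneracy enters in both directions. On the singular set, $d\lambda^n \neq 0$ is guaranteed by (\ref{eq:contact_Hamiltonian_form}). At non-singular points I would instead argue directly, without using $d\lambda^n$:
\begin{itemize}
\item Pick any $R$ with $\lambda(R) = 1$ locally.
\item Define $\mu$ near such a point by $\mu(Z,\cdot) = \lambda\wedge d\lambda^{n-1}$ on a complement of $Z$, extended by declaring $\mu(R,\ldots)$ appropriately.
\item Alternatively, show $Z \neq 0$ off the singular set. If $Z = 0$ at a point with $\lambda\neq 0$, then $f\lambda = 0$ forces $f = 0$, contradicting nowhere vanishing. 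So $Z$ spans the rank-one kernel of $d\lambda|_\eta$.
\item Then use Lemma \ref{lem:spanning_vector_fields} to produce a volume form making $Z$ characteristic, and patch with the formula $\mu = d\lambda^n/(nf)$ near the singular set via a partition of unity. Positive combinations of volume forms with the same orientation relative to $Z$ preserve (\ref{eq:characteristic_vectorfield}) after rescaling.
\end{itemize}

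The delicate point is matching the rescaling so that the identity holds exactly for $\lambda$ itself, rather than for a multiple $g\lambda$. I expect this patching, and checking that it is consistent with $d\lambda^n = nf\mu$ wherever $d\lambda^n\neq 0$, to be the hardest part of the argument. In the forward direction the same issue appears as the question of whether $f = d\lambda^n/(n\mu)$ can vanish at non-singular points. I would treat this by the same local analysis, using that $\iota_Z d\lambda\neq 0$ whenever $d\lambda$ restricted to $\eta$ has only the rank-one kernel spanned by $Z$.
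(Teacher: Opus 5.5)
\textbf{Gap in the forward direction.} The step that fails is the one you flag at the end: showing that $f$ is nowhere zero. Your proposed remedy is that $\iota_Z d\lambda\neq 0$ whenever $Z$ spans the rank-one kernel of $d\lambda|_\eta$. That claim is false, and no argument can close this step, because the forward implication as literally stated is false.

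Your own identity already gives $f = d\lambda^n/(n\mu)$. So $f$ vanishes exactly where $d\lambda$ is degenerate, and the condition $\lambda\wedge d\lambda^{n-1}\neq 0$ does not control $d\lambda^n$. Here is a concrete counterexample. Take $\Sigma = S^1_s\times\Gamma$ with $\lambda = \alpha$ a contact form on $\Gamma$ (the suspension of the identity, Example \ref{ex:suspensions}), and $\mu = ds\wedge\alpha\wedge d\alpha^{n-1}$. Then $Z = \partial_s$ is characteristic, but $\iota_Z d\lambda = 0$. More generally, on any closed $\Sigma$ Stokes gives $\int_\Sigma nf\mu = \int_\Sigma d\lambda^n = 0$, so $f$ must vanish somewhere.

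The paper's proof does not deliver nonvanishing either. Away from the singularities it only produces a smooth $f$, and it uses nonvanishing only on a neighborhood of the singular set, where $d\lambda$ is symplectic. That weaker conclusion is: $\iota_Z d\lambda = f\lambda$ with $f$ smooth and nonzero near the singular set. It is all that Lemma \ref{lem:contact_transversals} and Theorem \ref{thm:no_Anosovs} use. Your global formula $f := d\lambda^n/(n\mu)$ proves it more directly than the paper's split into the regular region and a symplectic neighborhood with $n\mu = \pm d\lambda^n$.

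\textbf{The converse needs no patching.} You are missing a one-line observation. Since $\iota_Z d\lambda^n = nf\,\lambda\wedge d\lambda^{n-1}$ with $f\neq 0$, any zero of $d\lambda^n$ would also be a zero of $\lambda\wedge d\lambda^{n-1}$. That contradicts (\ref{eq:contact_Hamiltonian_form}). Hence $d\lambda^n$ is nowhere zero, $\mu = d\lambda^n/(nf)$ is a volume form on all of $\Sigma$, and the converse is finished.

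The patching plan would also have needed repair. Lemma \ref{lem:spanning_vector_fields} changes the contact Hamiltonian form, not the volume form. And wherever $Z\neq 0$, the volume form is already uniquely determined by $\iota_Z\mu = \lambda\wedge d\lambda^{n-1}$, so there is nothing to patch.

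The same observation shows that the right-hand condition of the lemma can hold only when $\lambda$ is a global Liouville form. This is another way to see that the equivalence as stated is correct only in that case.
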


\begin{proof} Away from the singularities of $\eta$, Lemma \ref{lem:ker_description} immediately implies that $Z$ is characteristic if and only if $\iota_Zd\lambda|_\eta = 0$ or equivalently $\iota_Zd\lambda = f\lambda$ for a (unique) smooth function $f$. Thus, it suffices to prove the result in a neighborhood $U$ of the singularities where $d\lambda$ is symplectic. Fix a volume form $\mu$ such that $n \cdot \mu = \pm d\lambda^n$ in the neighborhood $U$. For this choice of volume form, it is simple to see that for any vector field $Z$, we have
\[
\iota_Z\mu = \pm f \cdot \lambda \wedge d\lambda^{n-1} \text{in $U$} \qquad\text{if and only if}\qquad \iota_Zd\lambda = f \cdot \lambda\text{ in $U$}
\]
Moreover, a vector field $Z$ is characteristic on $U$ if and only if it satisfies the formula on the left for some choice of $f$ that is nowhere vanishing on $U$. This proves the result.\end{proof}

Any hypersurface in a contact Hamiltonian manifold that is transverse to the characteristic foliation has a natural contact structure, and a characteristic flow induces a local identification with the symplectization.

\begin{lemma}[Contact Transversals] \label{lem:contact_transversals} Any hypersurface $\Gamma$ in a contact Hamiltonian manifold $(\Sigma,\eta)$ that is transverse to the characteristic foliation $\Sigma_\eta$ is a contact manifold with contact structure
\[
\xi = \eta \cap T\Gamma
\]
Moreover, if $\Gamma$ is compact then, for small $\epsilon$, the flow by any characteristic vector field $Z$ defines an embedding
\[
[-\epsilon,\epsilon]_s \times \Gamma \to \Sigma \qquad\text{inducing an identification}\qquad \eta = \on{span}(\partial_s) \oplus \xi
\]\end{lemma}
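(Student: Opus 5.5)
Two things need to be shown: that $\xi=\eta\cap T\Gamma$ is a contact structure on $\Gamma$, and that the flow of any characteristic vector field $Z$ gives a collar in which $\eta=\on{span}(\partial_s)\oplus\xi$. Both follow from the identity $\iota_Z d\lambda = f\lambda$ of Lemma \ref{lem:ker_lambda_description}.

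\textbf{Setup.} First note that $\Gamma$ contains no singularities of $\eta$. By Lemma \ref{lem:ker_description}, these are exactly the zeros of $Z$, and at such a point the characteristic foliation is not transverse to anything. Fix a contact Hamiltonian form $\lambda$ and a characteristic vector field $Z$. By Lemma \ref{lem:ker_lambda_description} we have $\iota_Zd\lambda = f\lambda$ with $f$ nowhere vanishing, and evaluating on $Z$ gives $\lambda(Z)=0$. So $Z$ is tangent to $\eta$ and transverse to $\Gamma$. Let $\alpha=\lambda|_\Gamma$.

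\textbf{Contact condition.} At a point of $\Gamma$, $\alpha$ is nonzero because $\eta$ is a hyperplane there containing the transverse vector $Z$, so $\eta\not\subset T\Gamma$. Thus $\xi=\ker\alpha$ is a hyperplane field on $\Gamma$. Now $T\Sigma=\on{span}(Z)\oplus T\Gamma$, and intersecting with $\eta\ni Z$ gives $\eta=\on{span}(Z)\oplus\xi$. Since $\lambda\wedge d\lambda^{n-1}\neq 0$ off the singularities, $\iota_Z(\lambda\wedge d\lambda^{n-1})=0$ by (\ref{eq:char_vectorfield_f_version}), and $T\Sigma = \on{span}(Z)\oplus T\Gamma$, the restriction $\alpha\wedge d\alpha^{n-1}=(\lambda\wedge d\lambda^{n-1})|_{T\Gamma}$ is nonzero. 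Equivalently, $d\lambda|_\eta$ has kernel exactly $\on{span}(Z)$, so $d\lambda|_\xi$ is nondegenerate. Hence $\xi$ is contact.

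\textbf{Collar.} Compactness of $\Gamma$ and transversality give an embedding $\Psi:[-\epsilon,\epsilon]\times\Gamma\to\Sigma$, $\Psi(s,x)=\phi^Z_s(x)$, for small $\epsilon$, with $\Psi_*\partial_s=Z$. The main point is that the flow preserves $\eta$ conformally: by Cartan's formula,
\[
\mathcal{L}_Z\lambda = d\iota_Z\lambda + \iota_Zd\lambda = f\lambda .
\]
Hence $(\phi^Z_s)^*\lambda = g_s\lambda$ with $g_s=\exp\bigl(\int_0^s f\circ\phi^Z_t\,dt\bigr)>0$. Pulling back, $\Psi^*\lambda$ evaluates to zero on $\partial_s$, and on $T\Gamma$ at level $s$ it equals $g_s(x)\,\alpha$. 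So $\Psi^*\lambda = g\cdot\alpha$ with no $ds$ component, and therefore $\ker\Psi^*\lambda=\on{span}(\partial_s)\oplus\xi$, as claimed.

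The only step needing care is the conformal invariance $\mathcal{L}_Z\lambda=f\lambda$. It is what lets the splitting at $s=0$ propagate along the flow, and it rests entirely on Lemma \ref{lem:ker_lambda_description}.
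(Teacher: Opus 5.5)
Your proof is correct and takes essentially the same route as the paper. You get non-vanishing of $(\lambda\wedge d\lambda^{n-1})|_{T\Gamma}$ because $Z$ spans its kernel and is transverse to $\Gamma$, and you then integrate $\mathcal{L}_Z\lambda=f\lambda$ along the flow to carry the splitting from $0\times\Gamma$ to the collar; you are in fact more explicit than the paper about the identity $\lambda(Z)=0$ and the form of $\Psi^*\lambda$.

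One small caution, despite the wording of Lemma \ref{lem:ker_lambda_description}: $f$ need not be nowhere vanishing away from singularities. On a symplectization with $\lambda=\alpha$ and $Z=\partial_s$ one has $\iota_Z d\lambda=0$, so $f\equiv 0$. You should therefore justify $\lambda(Z)=0$ by $\Sigma_\eta\subset\eta$ from Lemma \ref{lem:ker_description}; nothing else in your argument uses that $f$ is non-vanishing.
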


\begin{proof} Take any contact Hamiltonian form $\lambda$ and let $Z$ be a characteristic vector field. By assumption, it is non-zero along $\Gamma$ and transverse to $\Gamma$. Since $Z$ is the unique vector field in the kernel of the interior product with $\lambda \wedge d\lambda^{n-1}$, it follows that $\lambda \wedge d\lambda^{n-1}$ restricts to a nowhere vanishing $(2n-1)$-form on $\Gamma$. Thus $\Gamma$ is contact with contact structure $\on{ker}(\lambda|_\Gamma) = \eta \cap T\Gamma$. Next, let $U$ be a compact neighborhood of $\Gamma$. The flow by $Z$ induces a map $\Phi:[-\epsilon,\epsilon]_s \times U \to \Sigma$. By Lemma \ref{lem:ker_lambda_description}, the flow satisifes
\[
\mathcal{L}_Z\lambda = \iota_Zd\lambda = f\lambda \qquad\text{and thus}\qquad \Phi^*_s\lambda = F_s \lambda\]
for some smooth function $f$ and family of smooth functions $F_s$ on $U$. It follows that the kernel of the 1-form $\alpha = \Phi^*\lambda$ restricted to $[-\epsilon,\epsilon]_s \times \Gamma$ is invariant under translation by $s$. The kernel along $0 \times \Gamma$ is spanned by $\partial_s$ and $\eta \cap T\Gamma = \xi$, so this proves the result. \end{proof}

\begin{cor}[Conformally Liouville] \label{cor:conformally_Liouville_transversal} For any point $P$ in a contact Hamiltonian manifold $(\Sigma,\eta)$, there is a contact Hamiltonian form $\lambda$ for $\eta$ such that
\[
\lambda \text{ is a Liouville form in a neighborhood of $P$}
\]
\end{cor}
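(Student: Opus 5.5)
Split into two cases according to whether $P$ is a singularity of $\eta$.

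\emph{Case 1: $P$ is a singular point.} Take any contact Hamiltonian form $\lambda$. It vanishes at $P$, so $\lambda\wedge d\lambda^{n-1}$ vanishes there too, and by (\ref{eq:contact_Hamiltonian_form}) we get $d\lambda^n\neq 0$ at $P$. By openness, $d\lambda$ is symplectic on a neighborhood of $P$. There $\lambda$ is a Liouville form, and nothing more is needed.

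\emph{Case 2: $P$ is a non-singular point.} By Lemma \ref{lem:ker_description}, the characteristic foliation is non-singular at $P$. Choose a small hypersurface $\Gamma$ through $P$ transverse to $\Sigma_\eta$; for instance, a small disk transverse to a characteristic vector field $Z$ with $Z(P)\neq 0$. Shrink it so that its closure $\bar\Gamma$ is a compact manifold with boundary. Lemma \ref{lem:contact_transversals} then makes $\Gamma$ contact with $\xi=\eta\cap T\Gamma$. Flowing by $Z$ gives an embedding $[-\epsilon,\epsilon]_s\times\Gamma\to\Sigma$ onto a neighborhood of $P$, under which $\eta=\on{span}(\partial_s)\oplus\xi$. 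The lemma is stated for closed compact $\Gamma$, but its proof only uses flowing a compact set for short time, so it applies to $\bar\Gamma$. Now fix a contact form $\alpha$ on $\Gamma$ with $\ker\alpha=\xi$. On the product, the 1-form $e^s\alpha$ has kernel $\on{span}(\partial_s)\oplus\xi$, so it defines $\eta$ there. Its differential $d(e^s\alpha)=e^s(ds\wedge\alpha+d\alpha)$ is the standard symplectization form, hence symplectic. So $e^s\alpha$ is Liouville on this neighborhood, with Liouville field $\partial_s$.

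It remains to produce a global contact Hamiltonian form. On the neighborhood $V$ from Case 2, both $e^s\alpha$ (pushed forward) and a fixed global form $\lambda$ define $\eta$ and neither vanishes. So $e^s\alpha=g\lambda$ for a nowhere vanishing smooth $g$ on $V$; after a sign flip of $\alpha$ if needed, $g>0$. Extend $g$ from a smaller neighborhood $V'\Subset V$ to a positive function $\tilde g$ on all of $\Sigma$, using a bump function $\rho$ and setting $\tilde g=\rho g+(1-\rho)$. Then $\tilde g\lambda$ is contact Hamiltonian by the Rescaling Invariance lemma, and it equals $e^s\alpha$ near $P$. The same extension is not needed in Case 1.

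The main obstacle is the non-singular case: verifying the symplectization normal form. That is supplied by Lemma \ref{lem:contact_transversals}, together with the remark that it works for a small compact transverse disk rather than a closed hypersurface. The rest is routine.
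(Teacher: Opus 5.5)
Your proof is correct and follows the paper's argument: the singular case is immediate from Definition \ref{def:contact_Hamiltonian_form}, and in the non-singular case you use the chart from Lemma \ref{lem:contact_transversals} and take $e^s\alpha$. Two of your steps are details the paper leaves implicit: the bump-function extension of $e^s\alpha$ to a global contact Hamiltonian form $\tilde g\lambda$, and the remark that the lemma applies to a small compact transverse disk.
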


\begin{proof} If $P$ is a singular point of $\eta$, then any contact Hamiltonian form is Liouville near $P$ by Definition \ref{def:contact_Hamiltonian_form}. At a non-singular point $P$, the characteristic foliation is non-singular. Choose a small hypersurface $\Gamma$ containing $P$. By Lemma \ref{lem:contact_transversals}, there is a chart containing $P$ in its interior
\[
[-\epsilon,\epsilon] \times \Gamma \to \Sigma \qquad\text{such that}\qquad \eta = \partial_s \oplus \xi
\]
for some contact form $\xi$ on $\Gamma$. If $\alpha$ is a contact form on $\Gamma$ for $\xi$, then $\lambda = e^s \alpha$ is a contact Hamiltonian form in the chart that is Liouville.\end{proof}

\begin{remark} The observations in Lemmas \ref{lem:ker_description}-\ref{lem:contact_transversals} show that the flow of the characteristic foliation can be viewed as conformally symplectic. More specifically, fix a contact Hamiltonian manifold $(\Sigma,\eta)$ with contact Hamiltonian form $\lambda$ and a characteristic vector field $Z$. Also fix two open sets $U$ and $W$ such that the time $T$ flow $\Phi$ of $Z$ maps $U$ to $W$. Then $\Phi$ is a map
\[
\Phi:U \to W\qquad\text{with}\qquad \Phi^*\lambda|_W = F\cdot \lambda|_U
\]
Alternatively, $\Phi$ is conformally symplectic (or more appropriately, conformally Liouville) whenever $\lambda$ is chosen to be Liouvile on $U$ and $W$. If $U$ and $W$ are replaced with transverse hypersurfaces, then $\Phi$ defines a contactomorphism between those transversals. From this perspective, characteristic flows are the flow analogues of contactomorphisms.\end{remark}

\begin{figure}[h]
    \centering
    \includegraphics[width=\linewidth]{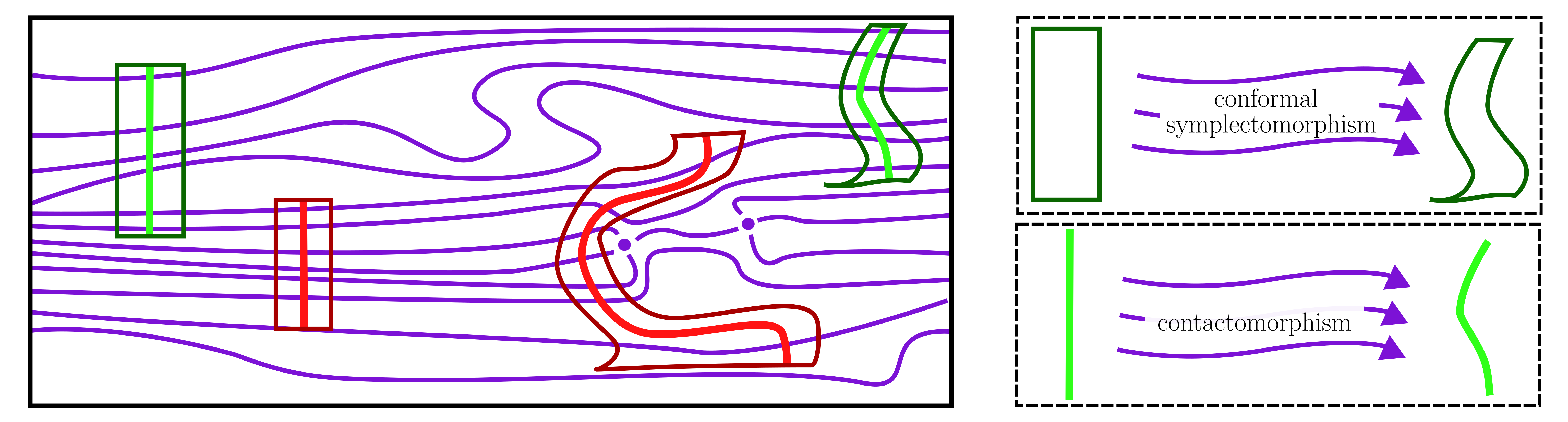}
    \caption{The characteristic foliation along with contact transversals mapped to each other, and small neighborhoods isomorphic to their symplectizations.}
    \label{fig:characteristic_flow}
\end{figure}

There is a reasonable notion of isotropic and Legendrian sub-manifolds in the setting of contact Hamiltonian manifolds.

\begin{definition}[Isotropic] An embedded (or more generally, immersed) sub-manifold $\Lambda \to \Sigma$ in a contact Hamiltonian manifold $(\Sigma,\eta)$ is \emph{isotropic} if $T\Lambda \subset \eta$.\end{definition}

\begin{lemma}[Dimension] \label{lem:isotropic_dimension} Any isotropic sub-manifold $\Lambda$ in a contact Hamiltonian manifold $(\Sigma,\eta)$ has
\begin{equation}\label{eq:isotropic_dim}
\on{dim}(\Lambda) \le \frac{1}{2} \on{dim}(\Sigma)
\end{equation}
The isotropic $\Lambda$ is tangent to the characteristic foliation $\Sigma_\eta$ (or equivalently, invariant under any characterstic flow) if the inequality (\ref{eq:isotropic_dim}) is an equality.\end{lemma}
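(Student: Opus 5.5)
The plan is a pointwise linear-algebra argument at each point $p \in \Lambda$, split according to whether $p$ is a singularity of $\eta$, with a final closedness argument for the tangency claim. Write $V = T_p\Lambda$, $k = \dim \Lambda$ and $\dim \Sigma = 2n$, and fix a contact Hamiltonian form $\lambda$ with $\eta = \ker(\lambda)$.

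The first step is to show that $V$ is isotropic for $d\lambda_p$. Since $T\Lambda \subset \eta$, the pullback $\lambda|_\Lambda$ vanishes identically, hence $d(\lambda|_\Lambda) = (d\lambda)|_\Lambda = 0$. This uses only that $\Lambda$ is an immersed submanifold, so the immersed case is the same.

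The second step treats a singular point $p$ of $\eta$. There $\lambda_p = 0$, so by (\ref{eq:contact_Hamiltonian_form}) the form $d\lambda_p$ is symplectic on $T_p\Sigma$. Since $V$ is isotropic for $d\lambda_p$, we get $k \le n$.

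The third step treats a non-singular point $p$. Here $\lambda_p \wedge d\lambda_p^{n-1} \neq 0$, so $\omega = d\lambda_p|_{\eta_p}$ is a $2$-form on the $(2n-1)$-dimensional space $\eta_p$ of rank $2n-2$. By Lemma \ref{lem:ker_description}, its kernel is the line $L = (\Sigma_\eta)_p$, so $\omega$ descends to a symplectic form on $\eta_p/L$, which has dimension $2n-2$.
\begin{itemize}
\item The image of $V \subset \eta_p$ in $\eta_p/L$ is isotropic, so it has dimension at most $n-1$.
\item Hence $\dim V \le \dim(V/(V\cap L)) + \dim(V \cap L) \le (n-1) + 1 = n$.
\item If $k = n$, both inequalities must be equalities, so $\dim(V \cap L) = 1$, i.e. $L \subset T_p\Lambda$.
\end{itemize}
This proves (\ref{eq:isotropic_dim}) and gives tangency at the non-singular points of an $n$-dimensional isotropic submanifold.

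The fourth step handles tangency at singular points. There the characteristic vector field $Z$ vanishes, so $Z_p = 0 \in T_p\Lambda$ trivially. So $Z$ is tangent to $\Lambda$ everywhere, and invariance under the characteristic flow follows by uniqueness of integral curves, at least for embedded, properly embedded or closed $\Lambda$.

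The only mildly delicate point, which I expect to be the main thing to check, is the "equivalently invariant under flow" phrasing for immersed $\Lambda$. There I would state it as: $Z$ lifts to a tangent vector field on $\Lambda$, whose flow is well defined locally. Everything else is the linear algebra of the previous steps.
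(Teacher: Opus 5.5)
Your proposal is correct and follows essentially the same route as the paper: first $d\lambda|_{T\Lambda}=0$, then the symplectic case at singularities of $\eta$ (where tangency is tautological), then the symplectic quotient $\eta/\Sigma_\eta$ at non-singular points, which gives $\dim T\Lambda \le \dim V + 1 = n$, with equality forcing $\Sigma_\eta \subset T\Lambda$. Your extra remarks, namely why $\lambda\wedge d\lambda^{n-1}\neq 0$ at non-singular points and how flow-invariance works for immersed or non-closed $\Lambda$, make explicit details the paper leaves implicit.
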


\begin{proof} Let $\lambda$ be any contact Hamiltonian form for $\eta$ and note that $d\lambda|_{T\Lambda} = 0$. There are two cases. At a point where $\eta = T\Sigma$, the form $d\lambda$ is symplectic and $T\Lambda$ is isotropic, implying (\ref{eq:isotropic_dim}). Since $\Sigma_\eta$ has a singularity at such a point, the tangency condition holds tautologically.

\vspace{3pt}

At any other point, we know that $d\lambda|_\eta$ is maximally non-degenerate and $T\Lambda \subset \eta$. It follows that $d\lambda|_\eta$ descends to a symplectic form on the quotient $\eta/\Sigma_\eta$ and $T\Lambda$ projects to an isotropic subspace  $V \subset \eta/\Sigma_\eta$. It follows that at that point
\[
\on{dim}(T\Lambda) \le \on{dim}(V) + 1 = \frac{1}{2}\on{dim}(\eta/\Sigma_\eta) + 1 = \frac{1}{2}\on{dim}(\Sigma)
\]
This can only be an equality if $\on{dim}(T\Lambda) = \on{dim}(V) + 1$ which implies that $\Sigma_\eta \subset T\Lambda$. \end{proof}

\begin{definition}[Legendrian] A \emph{Legendrian} sub-manifold $\Lambda \to \Sigma$ is an isotropic sub-manifold of maximal dimension.
\end{definition}

\subsection{Contactization} Any contact Hamiltonian manifold admits a contactization, which is a contact structure on the product of the manifold with a small interval. This construction requires the following choice of auxilliary data.

\begin{definition}[Framing] \label{def:framing} A \emph{framing} $(u,\theta)$ for a contact Hamiltonian form $\lambda$ on a contact Hamiltonian manifold $(\Sigma,\eta)$ is a pair of a smooth function $u$ and a smooth 1-form $\theta$ on $\Sigma$ where
\begin{equation} \label{eq:framing_volume_form}
u \cdot d\lambda^n + n\theta \wedge \lambda \wedge d\lambda^{n-1} \quad\text{is a volume form on $\Sigma$}
\end{equation}\end{definition}

\begin{definition}[Contactization] The \emph{contactization} $(C\Sigma,\xi_\eta)$ of a contact Hamiltonian manifold $(\Sigma,\eta)$ is the contact manifold
\[
C\Sigma = [-\epsilon,\epsilon]_s \times \Sigma
\]
equipped with the contact structure $\xi$ defined by the contact form
\[
\alpha = uds + s(\theta + du) + \lambda \qquad\text{for any contact Hamiltonian form $\lambda$ and framing $(u,\theta)$}
\]\end{definition}

To show that the contactization is a contact manifold independent of the framing, we need some elementary results. First, note that there is an ample supply of framings available.

\begin{lemma}[Volume Form] \label{lem:existence_of_framing} For any contact Hamiltonian from $\lambda$ and volume form $\mu$ on $\Sigma$, there is a framing $(u,\theta)$ such that
\[
\mu = u \cdot d\lambda^n + n\theta \wedge \lambda \wedge d\lambda^{n-1}
\]
\end{lemma}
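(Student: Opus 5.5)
Fix $\lambda$ and $\mu$. The plan is to build $(u,\theta)$ locally on an open cover and then patch with a partition of unity. The key point is that the target equation
\[
\mu = u\cdot d\lambda^n + n\theta\wedge\lambda\wedge d\lambda^{n-1}
\]
is \emph{linear} in the pair $(u,\theta)$. So if $\{\rho_i\}$ is a partition of unity subordinate to a cover $\{U_i\}$, and $(u_i,\theta_i)$ solves the equation on $U_i$, then $u=\sum_i\rho_i u_i$ and $\theta=\sum_i\rho_i\theta_i$ solve it globally:
\[
\sum_i\rho_i\mu=\mu .
\]
This gives a framing, since $\mu$ is a volume form. The task therefore reduces to solving the equation locally near each point.

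\textbf{Singular points.} Let $P$ be a point where $\lambda$ vanishes. By Definition \ref{def:contact_Hamiltonian_form}, $d\lambda^n$ is nonvanishing at the points where $\lambda\wedge d\lambda^{n-1}$ vanishes, in particular at $P$. Hence $d\lambda^n\neq 0$ on a neighborhood $U$ of $P$. On $U$, take $\theta=0$ and let $u$ be the function defined by $\mu=u\cdot d\lambda^n$. This $u$ is smooth and nowhere zero. More generally, the same choice works on the open set where $d\lambda^n\neq0$.

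\textbf{Points where $\lambda\wedge d\lambda^{n-1}\neq 0$.} On this open set, take $u=0$. We need a $1$-form $\theta$ with $n\theta\wedge\omega=\mu$, where $\omega=\lambda\wedge d\lambda^{n-1}$ is a nonvanishing $(2n-1)$-form. Choose any vector field $V$ with $\iota_V\mu=\omega$; such a $V$ exists and is nonvanishing because $\iota:TM\to\Lambda^{2n-1}T^*M$, $V\mapsto\iota_V\mu$, is an isomorphism. (This $V$ is the characteristic vector field.) Now take $\theta$ to be any $1$-form with $\theta(V)=1/n$; locally this is just a dual covector. Then
\[
\iota_V(n\theta\wedge\mu)=n\theta(V)\mu-n\theta\wedge\omega .
\]
The left side vanishes because $n\theta\wedge\mu$ is a $(2n+1)$-form on a $2n$-manifold, so $n\theta\wedge\omega=\mu$. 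In fact such a $\theta$ can be chosen on this whole open set by choosing a metric, e.g. $\theta=g(V,\cdot)/(n|V|^2)$.

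\textbf{Covering and patching.} The two open sets $\{d\lambda^n\neq0\}$ and $\{\lambda\wedge d\lambda^{n-1}\neq0\}$ cover $\Sigma$ by the contact Hamiltonian condition. Patching the two local solutions with a two-element partition of unity, as above, finishes the proof. The only step needing any care is the second case, namely solving $n\theta\wedge\omega=\mu$, and the interior-product identity settles it.
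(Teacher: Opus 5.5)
Your proposal is correct and follows the paper's proof: the same cover by $\{d\lambda^n\neq 0\}$ and $\{\lambda\wedge d\lambda^{n-1}\neq 0\}$, a two-function partition of unity, and the linearity of the equation in $(u,\theta)$. Your interior-product argument for solving $n\theta\wedge\omega=\mu$ spells out a step the paper only asserts, and it correctly keeps the factor $n$ that the paper's proof drops.
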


\begin{proof} It follows from Definition \ref{def:contact_Hamiltonian_manifold} that there is a cover of $\Sigma$ by two open sets $U$ and $V$ such that
\begin{equation} \label{eq:existence_framing}
d\lambda^n \neq 0 \text{ on $U$}\qquad\text{and}\qquad \lambda \wedge d\lambda^{n-1} \neq 0 \text{ on $V$}
\end{equation}
Fix an auxilliary volume form $\mu$ and choose a partition of unity subordinate to the cover by $U$ and $V$. In other words, choose two smooth and non-negative functions $F_U$ and $F_V$ such that
\[
\on{supp}(F_U) \subset U \qquad \on{supp}(F_V) \subset V \qquad F_U + F_V = 1
\]
Then (\ref{eq:existence_framing}) implies that we may choose a smooth function $u$ compactly supported in $U$ and a smooth 1-form $\theta$ compactly supported in $V$ such that
\[
u \cdot d\lambda^n = F_U \cdot \mu\qquad\text{and}\qquad \theta \wedge \lambda \wedge d\lambda^{n-1} = F_V \cdot \mu 
\]
For this choice of $u$ and $\theta$, the form (\ref{eq:framing_volume_form}) coincides with $\mu$. This proves the claim. \end{proof}

\begin{remark} \label{rmk:convexity_of_framing} Note that the space of choices of framing is contractible (and even convex). This is immediate from the linearity of (\ref{eq:framing_volume_form}) in the framing. \end{remark}

We then have the following construction of an (essentially unique) germ of a contact structure on the thickening of a contact Hamiltonian manifold.

\begin{lemma}[Contact Germ] Fix a contact Hamiltonian form $\lambda$ and a framing $(u,\theta)$ on a contact Hamiltonian manifold $(\Sigma,\eta)$. Then for sufficiently small $\epsilon$
\[
\alpha = uds + s(\theta + du) + \lambda \qquad\text{is a contact form on}\qquad [-\epsilon,\epsilon]_s \times \Sigma
\]
The corresponding contact structure $\xi$ is independent of the contact Hamiltonian form $\lambda$ and the framing $(u,\theta)$ in a neighborhood of $\Sigma = 0 \times \Sigma$, up to ambient isotopy fixing $0 \times \Sigma$. \end{lemma}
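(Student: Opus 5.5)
Two things need proving: that $\alpha$ is contact for small $\epsilon$, and that the germ of $\ker\alpha$ near $0\times\Sigma$ is independent of the choices.

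\textbf{Contact condition.} The plan is to compute $\alpha \wedge d\alpha^n$ at $s=0$ and then invoke compactness, or work locally, to extend to a thin neighborhood. Write
\[
\alpha = u\,ds + s(\theta+du) + \lambda, \qquad d\alpha = du\wedge ds + ds\wedge(\theta+du) + s\,d(\theta+du) + d\lambda .
\]
The two $du\wedge ds$ terms cancel, so
\[
d\alpha = ds\wedge\theta + s\,d\theta + d\lambda .
\]
At $s=0$ this gives $\alpha = u\,ds+\lambda$ and $d\alpha = ds\wedge\theta + d\lambda$. Since $ds\wedge ds=0$ and a $(2n+1)$-form pulled back from $\Sigma$ vanishes, we get $d\alpha^n = d\lambda^n + n\,ds\wedge\theta\wedge d\lambda^{n-1}$. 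Expanding $\alpha\wedge d\alpha^n$, only the terms containing exactly one $ds$ survive:
\[
u\,ds\wedge d\lambda^n + n\,\lambda\wedge ds\wedge\theta\wedge d\lambda^{n-1} = ds\wedge\bigl(u\,d\lambda^n + n\,\theta\wedge\lambda\wedge d\lambda^{n-1}\bigr).
\]
The sign comes from moving $ds$ past $\lambda$ and then swapping $\lambda$ and $\theta$, which gives two sign changes. By the framing condition (\ref{eq:framing_volume_form}) this is a volume form. Nondegeneracy is an open condition, so it persists for $|s|\le\epsilon$; if $\Sigma$ is noncompact, $\epsilon$ should be taken to be a positive function.

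\textbf{Independence.} I would use Gray stability relative to $0\times\Sigma$.
\begin{itemize}
\item \emph{Framings.} Given two framings $(u_0,\theta_0)$ and $(u_1,\theta_1)$ for the same $\lambda$, the linear interpolation is again a framing. This uses Remark \ref{rmk:convexity_of_framing} together with the sign of the volume form: the two framings may produce volume forms of opposite orientation, and I would fix this by replacing $s$ with $-s$, i.e.\ flipping the sign of both $u$ and $\theta$. This gives a family $\alpha_t$ of contact forms on a uniform neighborhood.
\item \emph{Contact Hamiltonian forms.} Changing $\lambda$ to $f\lambda$ with $f>0$ (for $f<0$, compose with $s\mapsto -s$) is handled by connecting $f$ to $1$ through $f_t = (1-t)+tf$. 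The framing is rescaled simultaneously: $(u,\theta)\mapsto (f_t^n u,\ f_t^{n-1}\theta)$, or more simply a framing for $f_t\lambda$ is chosen via Lemma \ref{lem:existence_of_framing} with a fixed volume form, smoothly in $t$. This again gives a smooth path $\alpha_t$ of contact forms.
\end{itemize}

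Along either path, all the $\alpha_t$ restrict to $0\times\Sigma$ with kernel containing $\eta$. More precisely, the pullback of $\alpha_t$ to $T\Sigma$ at $s=0$ is $\lambda_t$, and $\ker\lambda_t=\eta$.

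\textbf{Main obstacle: a Moser vector field vanishing on $0\times\Sigma$.} Gray's method seeks $X_t\in\xi_t$ with
\[
\iota_{X_t}d\alpha_t|_{\xi_t} = -\dot\alpha_t|_{\xi_t},
\]
and the aim is for $X_t$ to vanish along $s=0$. This is the hardest step, because at $s=0$ the $ds$ component of $\dot\alpha_t$, namely $\dot u_t$, and the component $\dot\lambda_t$ need not vanish on $\xi_t$. Two remedies are available.
\begin{itemize}
\item \emph{Tangency instead of vanishing.} Ask only that $X_t$ be tangent to $0\times\Sigma$, which suffices for an isotopy preserving $0\times\Sigma$ setwise. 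Then compose with a further correction so that it is fixed pointwise.
\item \emph{Reparametrizing $s$ first.} Since $\alpha_t$ all induce the same singular hyperplane field on $\Sigma$, the hypersurface $0\times\Sigma$ has the same characteristic foliation for every $t$. I would then apply the standard uniqueness of contact germs near a hypersurface with a prescribed characteristic foliation: after replacing $s$ by $g_t s$ the forms agree along $T(C\Sigma)|_{0\times\Sigma}$ up to scaling, and then $\dot\alpha_t = O(s)$ up to a conformal factor, so $X_t = O(s)$. Integrating $X_t$ on a shrunken neighborhood yields the required ambient isotopy fixing $0\times\Sigma$.
\end{itemize}
I would try the second remedy first.
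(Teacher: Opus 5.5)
Your computation of $\alpha\wedge d\alpha^n$ along $s=0$ is exactly the paper's. Your uniqueness strategy is also the paper's in substance. The paper just notes that $\xi\cap T\Sigma=\eta$ for every choice of $(\lambda,u,\theta)$ and invokes Lemma~\ref{lem:stability_for_hypersurfaces}, whose isotopy only satisfies $\Phi_t(\Sigma)=\Sigma$. So ``fixing $0\times\Sigma$'' is meant setwise, and pointwise fixing is not needed. Your remedy 2 is that same citation, but it must be phrased with the induced singular hyperplane field $\eta$, not the characteristic foliation. In dimension $\geq 5$ the characteristic line field does not determine the germ: $\partial_s$ spans the characteristic foliation of the symplectization structure on $\mathbb{R}\times\Gamma$ for every contact structure on $\Gamma$. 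The real problems are in the extra steps you add around this citation.

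\emph{The sign flip.} $s\mapsto -s$ is not an ambient isotopy, and no isotopy can replace it. If $\Phi$ is isotopic to the identity and $\Phi^*\alpha_0=g\alpha_1$, then $\Phi^*(\alpha_0\wedge d\alpha_0^n)=g^{n+1}\,\alpha_1\wedge d\alpha_1^n$. Hence for $n$ odd, two framings whose volume forms have opposite signs give contact structures inducing opposite orientations on $C\Sigma$, and these are not isotopic. The correct fix is the convention implicit in Remark~\ref{rmk:convexity_of_framing}: framings are positive for a fixed orientation, and $\lambda$ changes only by positive functions. With that convention, linear interpolation needs no flip. Separately, $(f_t^n u, f_t^{n-1}\theta)$ is not a framing for $f_t\lambda$. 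The pair $(u,\theta-u\,d\log f_t)$ is one; its framing form is $f_t^n$ times the original.

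\emph{Remedy 1.} You cannot ``ask'' for tangency, since Gray's equation determines $X_t\in\xi_t$. But tangency holds automatically. For $v\in\eta_p$ with $p\in 0\times\Sigma$, you have $d\alpha_t(X_t,v)=-\dot\alpha_t(v)=-\dot\lambda_t(v)=0$. So $X_t$ lies in the $d\alpha_t$-orthogonal of the hyperplane $\eta_p\subset\xi_{t,p}$, which is the characteristic line $\ker(d\lambda|_\eta)\subset T_p\Sigma$; at singular points $\xi_{t,p}=T_p\Sigma$. This already gives the paper's conclusion, and the unexplained ``further correction'' is unnecessary.

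\emph{Remedy 2.} The mechanism you sketch fails. After normalizing $\alpha_0|_{T\Sigma}=\alpha_1|_{T\Sigma}=\lambda$, replacing $s$ by $gs$ only turns $u_1$ into $gu_1$ at $s=0$. Solving $gu_1=u_0$ with $g>0$ is impossible in general, for example when the framing functions have different dividing sets. What works is a diffeomorphism fixing $\Sigma$ pointwise with $\partial_s\mapsto g\partial_s+V$, where $V$ is tangent to $\Sigma$ and $gu_1+\lambda(V)=u_0$. Near the singular set, take $g=u_0/u_1$, which is positive because the framing form reduces to $u\,d\lambda^n$ there. Elsewhere, let $\lambda(V)$ absorb the remainder where $\lambda\neq 0$. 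Then $\alpha_0=\alpha_1$ along $0\times\Sigma$, and the linear interpolation stays contact because the top form at $s=0$ is linear in $\theta$. This gives $\dot\alpha_t=O(s)$ and hence $X_t=O(s)$, which even yields the pointwise-fixed version.
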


\begin{proof} The associated differential form in the top degree associated to $\alpha$ can be computed as
\[
\alpha \wedge d\alpha^n = ds \wedge (u \cdot d\lambda^n + n\theta \wedge \lambda \wedge d\lambda^{n-1}) \qquad\text{along}\qquad 0 \times \Sigma \subset \R_s \times \Sigma
\]
This is a volume form in a small neighborhood of $0 \times \Sigma$ if and only if $(u,\theta)$ is framing for $\lambda$. Therefore $\alpha$ is a contact form near $0 \times \Sigma$ for any framing. For the uniqueness, note that
\[
\xi \cap T\Sigma = \eta \qquad\text{for the contact structure $\xi$ for any choice of $\lambda,u$ and $\theta$}
\]
Note that this is an equality of singular hyperplane fields. The uniqueness result thus follows from Lemma \ref{lem:stability_for_hypersurfaces} below. \end{proof}

\begin{lemma} \cite{g2008intro} \label{lem:stability_for_hypersurfaces} Let $\xi$ and $\xi'$ be a pair of contact structures on $Y$ and let $\Sigma \subset Y$ be a hypersurface such that $\xi \cap T\Sigma = \xi' \cap T\Sigma$ as singular hyperplane fields. Then there is an ambient isotopy
\[\Phi:[0,1]_t \times Y \to Y \qquad\text{such that}\qquad  \Phi_0 = \on{Id} \qquad \Phi_1^*\xi = \xi'\text{ near $\Sigma$} \quad\text{and}\quad \Phi_t(\Sigma) = \Sigma\]\end{lemma}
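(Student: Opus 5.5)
The plan is the standard Moser-type argument from Giroux's stability theorem for germs of contact structures along hypersurfaces. First, I will reduce to the case where $\xi$ and $\xi'$ are defined by contact forms $\alpha$ and $\alpha'$ with $\alpha|_\Sigma = \alpha'|_\Sigma$ as 1-forms on $\Sigma$. Since $\xi\cap T\Sigma = \xi'\cap T\Sigma$ as singular hyperplane fields, there is a nowhere vanishing function $g$ on $\Sigma$ with $\alpha'|_{T\Sigma} = g\cdot \alpha|_{T\Sigma}$. I will extend $g$ to a neighborhood and replace $\alpha$ by $g\alpha$. I also need the coorientations (signs) to match locally, which I will arrange by possibly replacing $\alpha'$ by $-\alpha'$ on components.

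Next, I will fix a tubular neighborhood $(-\delta,\delta)_s\times\Sigma$ and interpolate by $\alpha_t = (1-t)\alpha + t\alpha'$. The key point is that $\alpha_t$ is contact \emph{along} $\Sigma$, hence near $\Sigma$ by compactness, or locally near each point in general. To see this, write $\alpha = a\,ds + \beta_s$ and $\alpha' = a'\,ds + \beta'_s$ with $\beta_0 = \beta'_0 = \lambda$. Along $s=0$ one computes
\[
\alpha\wedge d\alpha^n = ds\wedge\bigl(a\, d\lambda^n + n(\dot\beta_0 - da)\wedge\lambda\wedge d\lambda^{n-1}\bigr)\Big|_{s=0}
\]
up to sign conventions, and similarly for $\alpha'$. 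This expression is linear in the pair $(a,\dot\beta_0)$ once $\lambda$ is fixed, which is exactly the observation in Remark \ref{rmk:convexity_of_framing}. Two volume forms with the same orientation have convex combinations that are volume forms. Hence $\alpha_t\wedge d\alpha_t^n$ is nonvanishing along $\Sigma$, provided the orientations agree. I will arrange that agreement via the sign choice above, possibly after composing with the reflection $s\mapsto -s$. I expect this orientation/sign bookkeeping to be the fiddliest point. The honest fix is to note that both $\alpha,\alpha'$ induce the same coorientation data on the common hyperplane field after the rescaling.

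Then I will apply Moser's trick. I look for a vector field $X_t\in\xi_t=\ker\alpha_t$ with
\[
\dot\alpha_t + \iota_{X_t}d\alpha_t = \mu_t\alpha_t .
\]
Evaluating on the Reeb field $R_t$ gives $\mu_t = \dot\alpha_t(R_t)$. The equation then determines $X_t$ uniquely by nondegeneracy of $d\alpha_t$ on $\xi_t$. The main obstacle is ensuring the flow preserves $\Sigma$, i.e.\ $X_t$ is tangent to $\Sigma$ along $\Sigma$. Since $\dot\alpha_t = \alpha'-\alpha$ vanishes on $T\Sigma$ along $\Sigma$, for $v\in\xi_t\cap T\Sigma$ we get $d\alpha_t(X_t,v) = \mu_t\alpha_t(v) - \dot\alpha_t(v) = 0$. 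So $X_t$ lies in the $d\alpha_t$-orthogonal of $\xi_t\cap T\Sigma$ inside $\xi_t$. At nonsingular points this orthogonal is the characteristic line $\Sigma_\eta\subset T\Sigma$, by Lemma \ref{lem:ker_description} applied to the hypersurface structure. At singular points $\xi_t\cap T\Sigma=\xi_t$, so $X_t=0$. In both cases $X_t$ is tangent to $\Sigma$.

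Finally, I will cut off $X_t$ outside a small neighborhood of $\Sigma$ so that the flow exists for $t\in[0,1]$. Integrating gives $\Phi_t$ with $\Phi_t^*\alpha_t = F_t\alpha$ near $\Sigma$. Hence $\Phi_1^*\xi' = \xi$; renaming, or using $\Phi_1^{-1}$, gives the stated direction. The flow satisfies $\Phi_0=\on{Id}$ and $\Phi_t(\Sigma)=\Sigma$ by the tangency shown above.
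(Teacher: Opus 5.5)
Your Moser argument is the standard proof of the cited stability result; the paper itself gives no proof and only cites it. Most of your mechanics are right: the normalization $\alpha|_{T\Sigma}=\alpha'|_{T\Sigma}=\lambda$, the linearity in $t$ of $\alpha_t\wedge d\alpha_t^n$ along $\Sigma$, and the tangency of $X_t$ to $\Sigma$ via $\ker(d\lambda|_\eta)$ all check out. The gap is the step you flagged as fiddly, and none of your proposed fixes closes it. After normalizing, the sign of $\alpha'\wedge d\alpha'^n$ relative to $\alpha\wedge d\alpha^n$ along $\Sigma$ is an invariant of the pair $(\xi,\xi')$. The normalization forces any rescaling to be $f\alpha$, $f\alpha'$ with the same $f$ on $\Sigma$, because the singular set of $\eta$ has empty interior. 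Such a rescaling multiplies both volume forms by $f^{n+1}$; in particular, flipping the sign of $\alpha'$ forces you to flip $\alpha$ as well. Composing with $s\mapsto -s$ is not available either, since it reverses orientation and so cannot be the time-one map of an isotopy starting at $\on{Id}$. Finally, the common restriction $\lambda$ does not fix the sign. At a singular point of $\eta$ the volume form along $\Sigma$ is $a\,ds\wedge d\lambda^n$, so its sign is governed by the transverse coefficient $a$, and the hypothesis says nothing about $a$.

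In fact the statement as written fails without an orientation hypothesis. On a collar $Y=(-\delta,\delta)_s\times\Sigma$, take any contact structure $\xi=\ker\alpha$ and set $\xi'=R^*\xi$, $\alpha'=R^*\alpha$, where $R(s,x)=(-s,x)$. Since $R$ fixes $\Sigma$ pointwise, $\alpha'|_{T\Sigma}=\alpha|_{T\Sigma}$. Since $R$ reverses orientation, $\alpha'\wedge d\alpha'^n=-\alpha\wedge d\alpha^n$ along $\Sigma$, so your midpoint $\alpha_{1/2}$ satisfies $\alpha_{1/2}\wedge d\alpha_{1/2}^n=0$ along $\Sigma$. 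When $n$ is odd (e.g. $\dim Y=3$), the sign of $\alpha\wedge d\alpha^n$ depends only on $\xi$, so $\xi$ and $\xi'$ induce opposite orientations of $Y$. Then no orientation-preserving diffeomorphism, and in particular no $\Phi_1$ isotopic to the identity, can satisfy $\Phi_1^*\xi=\xi'$ on any open set. You therefore need to add the hypothesis that, once $\alpha|_{T\Sigma}=\alpha'|_{T\Sigma}$, the forms $\alpha\wedge d\alpha^n$ and $\alpha'\wedge d\alpha'^n$ induce the same orientation along $\Sigma$. With that hypothesis, the rest of your proof goes through for closed, two-sided $\Sigma$. The same caveat applies to Remark \ref{rmk:convexity_of_framing}, which you rely on: framings form a convex set only once the sign of the volume form (\ref{eq:framing_volume_form}) is fixed.
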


\noindent Finally, note that the contactization provides a standard neighborhood model for any hypersurface in a contact manifold.

\begin{lemma}[Standard Neighborhood] \label{lem:std_nbhd} Let $\Sigma$ be a closed hypersurface in a contact manifold $(Y,\xi)$ with the contact Hamiltonian structure $\eta = \xi \cap T\Sigma$. Then there is a contact embedding
\[
\iota:C\Sigma \to Y \qquad\text{with}\qquad \iota(0 \times \Sigma) = \Sigma
\]
\end{lemma}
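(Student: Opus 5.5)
Since $\Sigma$ is a closed hypersurface it has a collar neighborhood in $Y$. The plan is to build the embedding in two stages: first a smooth collar on which $\xi$ is described by a form of contactization shape, and then an appeal to Lemma \ref{lem:stability_for_hypersurfaces} to match the contact structures near $0 \times \Sigma$.

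\textbf{Collar.} I will first assume that $\Sigma$ is two-sided, which is implicit in the statement because $C\Sigma = [-\epsilon,\epsilon]_s \times \Sigma$ has trivial normal bundle. Choose a vector field $V$ on a neighborhood of $\Sigma$ that is transverse to $\Sigma$. Flowing along $V$ gives a diffeomorphism
\[
\psi:[-\delta,\delta]_s \times \Sigma \to N
\]
onto a neighborhood $N$ of $\Sigma$, with $\psi(0,x)=x$.

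\textbf{Normal form of the pullback.} Fix a contact form $\alpha_Y$ for $\xi$ and write
\[
\psi^*\alpha_Y = a_s\, ds + \beta_s ,
\]
where $a_s$ is a family of functions and $\beta_s$ is a family of $1$-forms on $\Sigma$. At $s=0$ we have $\beta_0 = \alpha_Y|_\Sigma = \lambda$, which is a contact Hamiltonian form for $\eta$ by Example \ref{ex:hypersurfaces}. Set $u = a_0$ and define $\theta$ by $\theta + du = \partial_s\beta_s|_{s=0}$. The contact condition along $0\times\Sigma$, computed exactly as in the proof of the Contact Germ lemma, says
\[
\psi^*(\alpha_Y \wedge d\alpha_Y^n) = ds\wedge\bigl(u\, d\lambda^n + n\theta\wedge\lambda\wedge d\lambda^{n-1}\bigr) \quad\text{along } 0\times\Sigma .
\]
Since the left side is a volume form, $(u,\theta)$ is a framing for $\lambda$ in the sense of Definition \ref{def:framing}. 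It follows that $\alpha = u\,ds + s(\theta+du) + \lambda$ defines the contactization structure on $[-\epsilon,\epsilon]\times\Sigma$ for small $\epsilon$. I do not need this step to match the first-order terms exactly, since the next step only uses traces; a consistent choice of $(u,\theta)$ just makes the comparison cleaner.

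\textbf{Matching.} Both $\xi_\eta$ and $\psi^*\xi$ are contact structures near $0\times\Sigma$, and both trace $\eta$ on $0\times\Sigma$ as singular hyperplane fields. For $\xi_\eta$ this holds because $\alpha$ restricts to $\lambda$ there; for $\psi^*\xi$ it holds because $\psi^*\alpha_Y|_{T\Sigma} = \lambda$. Lemma \ref{lem:stability_for_hypersurfaces} then gives an isotopy $\Phi_t$ preserving $0\times\Sigma$ with $\Phi_1^*\psi^*\xi = \xi_\eta$ near $0\times\Sigma$. I then shrink $\epsilon$ so that this holds on all of $[-\epsilon,\epsilon]\times\Sigma$; compactness of $\Sigma$ makes such an $\epsilon$ exist. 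The embedding is $\iota = \psi\circ\Phi_1$. It satisfies $\iota(0\times\Sigma)=\Sigma$ because $\Phi_1$ preserves $0\times\Sigma$ and $\psi$ is the identity there.

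The real content is Lemma \ref{lem:stability_for_hypersurfaces}, which we assume. Within this proof, the main care points are the two-sidedness (coorientability) of $\Sigma$ and the uniform choice of $\epsilon$, which uses compactness.
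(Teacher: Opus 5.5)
Your proposal is correct and follows essentially the same route as the paper: take a collar, note that both the pulled-back structure and the contactization structure restrict to $\lambda$ on $0\times\Sigma$, and apply Lemma \ref{lem:stability_for_hypersurfaces}. Choosing the framing from the first-order terms of $\psi^*\alpha_Y$ is optional, as you say. Your remarks on two-sidedness and on shrinking $\epsilon$ make explicit points the paper leaves implicit.
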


\begin{proof} Choose a tubular neighborhood $\iota:[-\epsilon,\epsilon] \times \Sigma \to Y$ of $\Sigma$ and a contact form $\beta$ on $Y$. Let $\lambda$ be the restriction of $\iota^*\beta$ to $0 \times \Sigma$ and let $\alpha$ be the contactization contact form corresponding to an arbitrary framing. Then $\alpha$ and the pullback $\iota^*\beta$ are two contact forms on $C\Sigma$ that pull back to $\lambda$ on $0 \times \Sigma$. Thus $\iota$ can be isotoped to a contact embedding with $\iota(0 \times \Sigma) = \Sigma$ by Lemma \ref{lem:stability_for_hypersurfaces}.
\end{proof}

\subsection{Liouville Subsets} \label{subsec:Lioiville_subsets} We have already seen that Liouville domains are a natural example of contact Hamiltonian manifolds. While most contact Hamiltonian manifolds are not Liouville, even up to a conformal factor, it is still useful to consider the subsets that are.

\begin{definition}[Liouville Subsets] A subset $\Lambda \subset \Sigma$ in a contact Hamiltonian manifold $(\Sigma,\eta)$ is called \emph{Liouville} if there is a contact Hamiltonian form $\lambda$ such that
\[
\text{$\lambda|_U$ is a Liouville form on a neighborhood $U$ of $\Lambda$}
\]
We say that $\Lambda$ is \emph{positive} or \emph{negative} if $\lambda|_U$ is positive or negative Liouville (with respect to a choice of orientation or volume form). \end{definition}

\begin{remark} Note that a subset can be both positive and negative Liouville.
\end{remark}

There is a characterization of Liouville subsets that only makes reference to the smooth dynamics of the characteristic vector field. Specifically, we have the following lemma.

\begin{lemma}[Divergence Criterion] Fix a subset $\Lambda \subset \Sigma$ of a contact Hamiltonian manifold and a volume form $\mu$. Then $\Lambda$ is positive Liouville if and only if there is a characteristic vector field $Z$ such that
\[
\on{div}(Z,\mu)(Z,\mu) > 0 \qquad\text{along $\Lambda$}
\]
Similarly, $\Lambda$ is negative Liouville if and only if there is a characteristic vector field with negative divergence.\end{lemma}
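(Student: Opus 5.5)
Let $\lambda$ be a contact Hamiltonian form and $Z$ a vector field with $\iota_Z\mu = \lambda\wedge d\lambda^{n-1}$. The plan is to compute the divergence of such a $Z$ from Cartan's formula,
\[
\mathcal{L}_Z\mu = d\iota_Z\mu = d\lambda^n,
\]
so that $\on{div}(Z,\mu)\cdot\mu = d\lambda^n$. Consequently the divergence of $Z$ is positive (negative) along $\Lambda$ exactly when $d\lambda^n$ is a positive (negative) multiple of $\mu$ along $\Lambda$. By Lemma \ref{lem:spanning_vector_fields} every vector field spanning $\Sigma_\eta$ arises this way for some contact Hamiltonian form, and any positive rescaling of $Z$ is again characteristic for the same $\mu$ and a rescaled $\lambda$ (as in the proof of well-definedness). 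So the statement reduces to the following: $\Lambda$ is positive Liouville if and only if some contact Hamiltonian form $\lambda$ has $d\lambda^n>0$ along $\Lambda$, with orientation fixed by $\mu$.

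For the forward direction, assume $\lambda$ is a positive Liouville form on a neighborhood $U$ of $\Lambda$. Then $d\lambda^n>0$ on $U$, and the characteristic vector field of $(\lambda,\mu)$ has $\on{div}=d\lambda^n/\mu>0$ along $\Lambda$. For the converse, assume $Z$ is characteristic with positive divergence along $\Lambda$. Lemma \ref{lem:spanning_vector_fields} gives $\lambda$ with $\iota_Z\mu=\lambda\wedge d\lambda^{n-1}$, and the computation above gives $d\lambda^n=\on{div}(Z,\mu)\mu>0$ on $\Lambda$. Positivity is an open condition, so it persists on an open neighborhood $U$ of $\Lambda$. There $d\lambda$ is symplectic, and $\lambda|_U$ is a positive Liouville form. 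The negative case follows by the same argument with signs reversed.

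The main obstacle is to make sure the sign conventions for "positive Liouville" agree with $d\lambda^n$ compared to $\mu$ rather than $\mu/n!$. This only involves positive constants, so it does not affect the argument. A second point is the reliance on Lemma \ref{lem:spanning_vector_fields}, which is stated without proof. If it needed proof, I would write $Z=gZ_0$ with $Z_0$ the characteristic field of some $\lambda_0$. Replacing $\lambda_0$ by $f\lambda_0$ multiplies $\lambda_0\wedge d\lambda_0^{n-1}$ by $f^n$, so I would choose $f=g^{1/n}$ when $g>0$. When $g$ changes sign and $n$ is even, the sign can be absorbed by reversing the orientation convention. Locally near $\Lambda$ only the positive case is needed anyway, since we may replace $Z$ by $-Z$ together with $\mu$ by $-\mu$.
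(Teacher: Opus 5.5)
Your argument is correct and is the paper's own proof. For $Z$ characteristic with respect to $(\lambda,\mu)$, Cartan's formula gives $d\lambda^n=d\iota_Z\mu=\on{div}(Z,\mu)\cdot\mu$, so positive divergence along $\Lambda$ is the same as $d\lambda^n>0$ there, and openness extends this to a neighborhood.

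The detour through Lemma \ref{lem:spanning_vector_fields} is unnecessary, and your closing aside is wrong when $n$ is even. In that case $(f\lambda)\wedge d(f\lambda)^{n-1}=f^n\,\lambda\wedge d\lambda^{n-1}$ with $f^n>0$, so $-Z$ is never characteristic for $\mu$, and passing to $-\mu$ interchanges positive and negative Liouville rather than absorbing the sign. Concretely, at a singularity where $d\lambda^n<0$, the field $-Z$ has positive $\mu$-divergence, yet the point is not positive Liouville. This is why $Z$ must be read as characteristic for the fixed $\mu$, as the paper does.
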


\begin{proof} Note that $d\lambda^n = F \cdot \mu$, and that $\lambda$ is positive (or negative) Liouville if and only if $F$ is positive (or negative) everywhere along $\Lambda$. Now suppose that $Z$ is the characteristic vector field determined by $\mu$ and a contact Hamiltonian form $\lambda$ as in (\ref{eq:characteristic_vectorfield}). Then \begin{equation} \label{eq:divergence_equation}
d\lambda^n =d(\lambda \wedge d\lambda^{n-1}) = d(\iota_Z \mu) = \on{div}(Z,\mu)(Z,\mu) \cdot \mu  
\end{equation}
Thus, a characteristic vector field $Z$ has positive divergence if and only if the corresponding contact Hamiltonian form $\lambda$ is positive Liouville.  \end{proof}

The basic examples of Liouville subsets that will be used later are the following.

\begin{example}[Transversal] \label{ex:transversal_Liouville} Fix a contact sub-manifold $P \subset \Sigma$ in $(\Sigma,\eta)$ transverse to the characteristic foliation. Then Lemma \ref{lem:contact_transversals} and Corollary \ref{cor:conformally_Liouville_transversal} imply that there is an embedding
\[\R_s \times P \to \Sigma \qquad \text{of symplectization of $P$}\]
preserving the contact Hamiltonian structures. Therefore $P$ is (positive and negative) Liouville.
\end{example}

\begin{example}[Singularities] Any singularity $x$ of the characteristic foliation of a contact Hamiltonian manifold $(\Sigma,\eta)$ is automatically Liouville, essentially by Definition \ref{def:contact_Hamiltonian_form}.
\end{example}

\begin{lemma}[Hyperbolic Orbits] Let $\gamma$ be a hyperbolic orbit of the characteristic foliation of a contact Hamiltonian manifold $(\Sigma,\eta)$. Then $\gamma$ is Liouville. 
\end{lemma}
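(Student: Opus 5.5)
By the Divergence Criterion, it suffices to find a volume form $\mu$ and a characteristic vector field $Z$ with $\on{div}(Z,\mu) > 0$ along $\gamma$, or $< 0$ along $\gamma$. I will work in a neighborhood of $\gamma$ in which $Z$ is nonvanishing. The plan is to modify an arbitrary characteristic field $Z_0$ by a positive function $h$ (and possibly a sign), chosen so that the divergence becomes signed along $\gamma$.

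The first step uses the transversal structure. Take a small disk $\Gamma$ through a point of $\gamma$, transverse to the characteristic foliation. By Lemma~\ref{lem:contact_transversals}, $\Gamma$ is a contact manifold. The return map $\Phi$ of $Z_0$ is defined near $p=\gamma\cap\Gamma$ and is a contactomorphism fixing $p$. Choose a contact form $\alpha$ on $\Gamma$, and write $\Phi^*\alpha = F\alpha$. The linearization $d\Phi_p$ preserves $\xi_p$ and is conformally symplectic on it with factor $F(p)$. On the transverse (Reeb) direction it acts by $F(p)$ modulo $\xi_p$.

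Next I reduce the divergence to this conformal factor. In a symplectization-type chart from Lemma~\ref{lem:contact_transversals} near $\Gamma$, take $\lambda = e^s\alpha$ as in Corollary~\ref{cor:conformally_Liouville_transversal}. Along the whole orbit, $\mathcal{L}_Z\lambda = f\lambda$ by Lemma~\ref{lem:ker_lambda_description}. Integrating around the orbit gives $\Phi^*\lambda|_\Gamma = \exp\!\big(\int_\gamma f\,dt\big)\,\lambda$ at $p$, so $\log F(p) = \int_0^T f(\gamma(t))\,dt$. Similarly, $\mathcal{L}_Z(d\lambda^n) = n\,df\wedge\lambda\wedge d\lambda^{n-1} + n f\, d\lambda^n$. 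Where $d\lambda^n\neq 0$ this relates the divergence of $Z$ with respect to $\mu = d\lambda^n$ to $f$ plus a correction.

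The key claim is that hyperbolicity forces $F(p)\neq 1$. Indeed, if $F(p)=1$ then $d\Phi_p$ is symplectic on $\xi_p$ and has eigenvalue $1$ in the transverse direction (the determinant is $F(p)^{n}$, and the quotient eigenvalue is $F(p)$). That eigenvalue $1$ contradicts hyperbolicity of $\gamma$, i.e.\ no Floquet multiplier on the unit circle transverse to the flow. So $\int_\gamma f\neq 0$, and after possibly reversing orientation we may assume $\int_\gamma f > 0$.

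Finally, I make $f$ pointwise positive by rescaling. Replace $\lambda$ by $g\lambda$ for a positive function $g$ (Rescaling Invariance). Then $\mathcal{L}_Z(g\lambda) = (Z g/g + f)\,g\lambda$, so $f$ changes by $Z(\log g)$. Along the closed orbit I can solve $Z(\log g) = c - f$ with $c = \frac{1}{T}\int_\gamma f > 0$, since the right side has zero average; then I extend $g$ to a neighborhood. Now $\iota_Z d\lambda = \mathcal{L}_Z\lambda = c\lambda$ along $\gamma$ with $c>0$. Expanding $d\lambda^n$ at points of $\gamma$ using a basis $Z, R, \xi$ gives
\[
d\lambda^n(Z,R,\dots) = n\, c\,\lambda(R)\, (d\lambda|_\xi)^{n-1} \neq 0,
\]
so $\lambda$ is Liouville near $\gamma$, with a sign determined by $c$.

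I expect the main obstacle to be the eigenvalue argument: correctly identifying the transverse multiplier of the return map with the conformal factor $F(p)=\exp\int_\gamma f$. This needs care with sign conventions, and the fact that the non-$\xi$ eigenvalue equals the conformal factor should be checked via the invariant quotient $T\Gamma/\xi$.
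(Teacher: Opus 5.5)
Your argument is correct. The paper does not actually prove this lemma; it is left as an exercise with a pointer to Breen. So there is no written route to compare against, and your proposal supplies a complete, self-contained proof.

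Its organizing idea is the quantity $\int_0^T f(\gamma(t))\,dt=\log F(p)$, where $\iota_Z d\lambda=\mathcal{L}_Z\lambda=f\lambda$ and $F(p)$ is the conformal factor of the return map at its fixed point. This quantity has three properties:
\begin{itemize}
\item It is unchanged under $\lambda\mapsto g\lambda$ with $g>0$ (which shifts $f$ by $Z(\log g)$) and under positive reparametrization of $Z$.
\item Its exponential $F(p)$ is the eigenvalue of $d\Phi_p$ on the invariant quotient $T_p\Gamma/\xi_p$. Block triangularity with respect to $\xi_p\subset T_p\Gamma$ makes the step you were worried about immediate, and $F(p)=e^{\int f}>0$ removes any sign issue.
\item Your averaging step shows it is the only obstruction.
\end{itemize}
Together with the invariance, you therefore prove the sharper statement that a closed orbit is Liouville if and only if $F(p)\neq 1$. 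Only the absence of the eigenvalue $1$ in the quotient direction is used, not full hyperbolicity. As a bonus, the sign of $\log F(p)$ decides whether $\gamma$ is positive or negative. The pairing $\mu\leftrightarrow F(p)/\mu$ of eigenvalues of the conformally symplectic block on $\xi_p$ then recovers the index bounds of Lemma \ref{lem:breen_divergence} directly, without Lemma \ref{lem:isotropic_stable}.

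A few points to tighten:
\begin{itemize}
\item The form $e^s\alpha$ lives only in the chart near $\Gamma$, so it does not by itself give $\mathcal{L}_Z\lambda=f\lambda$ along the whole orbit. Just take any contact Hamiltonian form $\lambda$ near $\gamma$ and set $\alpha=\lambda|_\Gamma$. The value $F(p)$ does not depend on $\alpha$: replacing $\alpha$ by $h\alpha$ multiplies $F$ by $(h\circ\Phi)/h$, which equals $1$ at $p$.
\item Passing from the time-$T$ flow $\phi_T$ to the return map $\Phi(x)=\phi_{\tau(x)}(x)$ uses $d\Phi_p(v)=d\phi_T(v)+d\tau_p(v)\,Z(p)$ together with $\lambda(Z)=0$. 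This should be stated.
\item The final expansion is just $\iota_Z(d\lambda^n)=n f\,\lambda\wedge d\lambda^{n-1}$, so at non-singular points $d\lambda^n$ vanishes exactly where $f$ does. The divergence criterion, the formula for $\mathcal{L}_Z(d\lambda^n)$ and the orientation reversal are not needed.
\item When citing Lemma \ref{lem:ker_lambda_description}, use only the existence of a smooth $f$, as you do. Its ``nowhere vanishing'' clause cannot hold in general: for $\lambda=\alpha$ on $\R_s\times\Gamma$ one has $\iota_{\partial_s}d\alpha=0$, and on a closed manifold it would make $\lambda$ an exact symplectic form. If it did hold, the lemma you are proving would be trivial.
\end{itemize}
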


\begin{proof} We leave this as an exercise to the reader (cf. Breen \cite{b2021} for help).
\end{proof}

\begin{remark}[Hyperbolic Singularities And Orbits] Recall that a fixed point $x$ of a diffeomorphism $\Phi$ of a smooth manifold $M$ is \emph{hyperbolic} if the differential 
\[
T_x\Phi:T_xM \to T_xM \qquad\text{at the point $x$}
\]
is a hyperbolic linear map, e.g. an invertible linear map with no eigenvalues of unit norm. Similarly, a singularity $x$ of a singular line field $L$ is hyperbolic if it is a hyperbolic fixed point of map $\Phi_T$ for $T > 0$, where $\Phi$ is any generating flow. Finally, a closed orbit $\gamma$ of a vector field $V$ is hyperbolic if the closed orbit corresponds to a hyperbolic fixed point of the Poincar\'{e} return map on a small hypersurface transverse to the orbit at a point. \end{remark}

\begin{remark} One interpretation of Example \ref{ex:transversal_Liouville} is the following: regions where the characteristic foliation has trivial (non-recurrent) dynamics are automatically both positive and negative Liouville. In contrast, we will shortly see that regions containing recurrent dynamics (e.g. periodic orbits) are generally either one or the other. \end{remark}

\subsection{Stable And Unstable Manifolds} \label{subsec:stable_unstable_manifolds} We next analyze the stable and unstable manifolds of closed orbits and singularities. We recall the definition.

\begin{definition}[Stable/Unstable Manifolds] The \emph{stable manifold} and \emph{unstable manifold} of a closed orbit $\gamma$ of a singular line field is the sets given by
\[
W^s(\gamma) = \big\{x  \; : \; \lim_{T \to \infty}\big(\on{dist}(\Phi_T(x),\gamma)\big) \to 0\big\}
\]
\[
W^u(\gamma) = \big\{x \; : \; \lim_{T \to -\infty}\big(\on{dist}(\Phi_T(x),\gamma)\big) \to 0\big\}
\]
where $\Phi$ is generated by a smooth vector field that spans the singular line field and $\on{dist}(\cdot)$ is the distance for an arbitrary Riemannian metric. \end{definition}

\noindent The standard Stable Manifold Theorem (cf. \cite[Thm 6.1.1]{fh2019}) states that the stable (and unstable) manifolds are indeed smooth manifolds when the orbit is a hyperbolic orbit or singularity.

\begin{definition}[Index] \label{def:index} The \emph{index} $\on{ind}(\gamma)$ of a hyperbolic closed orbit (or singularity) of an oriented singular line field is the dimension of the stable manifold of $\gamma$.
\[
\on{ind}(\gamma) = \on{dim}(W^s(\gamma)) - 1 \text{ if $\gamma$ is an orbit} \qquad\text{and}\qquad \on{ind}(\gamma) = \on{dim}(W^s(\gamma)) \text{ if $\gamma$ is a singularity} 
\]
\end{definition}

\begin{lemma}[Isotropic Stable/Unstable Manifolds] \label{lem:isotropic_stable} Let $\gamma$ be a closed hyperbolic orbit (or singularity) of the characteristic foliation of a contact Hamiltonian manifold $(\Sigma,\eta)$. Then
\[
W^s(\gamma) \text{ is isotropic if $\gamma$ is positive Liouville}
\qquad\text{and}\qquad
W^u(\gamma) \text{ is isotropic if $\gamma$ is negative Liouville}
\]
\end{lemma}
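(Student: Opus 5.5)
By the symmetry $Z \mapsto -Z$, it suffices to treat the stable manifold when $\gamma$ is positive Liouville. Reversing the characteristic vector field exchanges $W^s$ and $W^u$. It also exchanges positive and negative divergence, hence positive and negative Liouville by the Divergence Criterion; alternatively, one can replace $\lambda$ by $-\lambda$ and reverse the orientation.

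So fix a contact Hamiltonian form $\lambda$ that is positive Liouville on a neighborhood $U$ of $\gamma$. Let $Z$ be the characteristic vector field of $\lambda$ with respect to the volume form $\mu$ normalized by $n\mu = d\lambda^n$ on $U$. As in the proof of Lemma \ref{lem:ker_lambda_description}, on $U$ this gives $\iota_Z d\lambda = \lambda$ (up to a positive constant, which only rescales time). Thus $Z$ is the Liouville vector field of $\lambda$ on $U$, and the flow $\Phi_T$ of $Z$ satisfies $\Phi_T^*d\lambda = e^{T}d\lambda$ and $\Phi_T^*\lambda = e^{T}\lambda$ on $U$.

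Next I show that the local stable manifold $W^s_{loc}(\gamma) \subset U$ is isotropic. Take $x \in W^s_{loc}(\gamma)$ and vectors $v,w \in T_xW^s$. For $T \ge 0$, the points $\Phi_T(x)$ stay in $U$. Since $W^s$ is invariant and hyperbolicity gives $|D\Phi_T v| \le C e^{-cT}|v|$ for components transverse to the flow direction, with the flow direction itself bounded, the images $D\Phi_T v$ and $D\Phi_T w$ remain bounded as $T \to \infty$. Hence
\[
\lambda_x(v) = e^{-T}\lambda_{\Phi_T x}(D\Phi_T v) \to 0 .
\]
So $\lambda$ vanishes on $TW^s_{loc}$, which means $TW^s_{loc} \subset \eta$. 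The same estimate applied to $d\lambda_x(v,w) = e^{-T}d\lambda(D\Phi_Tv, D\Phi_Tw)$ also gives $d\lambda|_{W^s_{loc}} = 0$, although this is not needed.

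To pass to the global stable manifold, write $W^s(\gamma) = \bigcup_{T \ge 0}\Phi_{-T}(W^s_{loc})$. The characteristic flow of any $Z$ preserves $\eta$, because $\mathcal{L}_Z\lambda = f\lambda$ by Lemma \ref{lem:ker_lambda_description}. Therefore $D\Phi_{-T}$ carries $\eta$ to $\eta$, and tangency to $\eta$ propagates from $W^s_{loc}$ to all of $W^s(\gamma)$.

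The main obstacle is the bound on $D\Phi_T v$ for tangent vectors to $W^s$. For a singularity this is just the contraction of $D\Phi_T$ on the stable subspace. For a closed orbit, $T_xW^s$ contains the flow direction, which does not contract. But $\lambda(Z) = 0$ away from singularities, because $Z \in \eta$ by Lemma \ref{lem:ker_description}. Moreover, $D\Phi_T$ is uniformly bounded on $T W^s_{loc}$ by the Stable Manifold Theorem. Boundedness rather than decay suffices, since the factor $e^{-T}$ does the work.
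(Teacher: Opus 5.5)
Your proposal is correct and follows essentially the paper's argument. You reduce to $W^s(\gamma)$ for positive Liouville $\gamma$ by reversing orientation, take $Z$ to be the Liouville vector field near $\gamma$ so that $\Phi_T^*\lambda = e^T\lambda$, use boundedness of $D\Phi_T$ on $TW^s(\gamma)$ to get $|\lambda(v)| \le Ce^{-T} \to 0$, and propagate along $W^s(\gamma)$ using that the characteristic flow preserves $\eta$.

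One small slip is in your aside. Replacing $\lambda$ by $-\lambda$ \emph{and} reversing the orientation does nothing when $n$ is odd, because each operation then multiplies both $Z$ and the sign of $d\lambda^n$ (relative to the orientation) by $-1$, so they cancel. Reversing the orientation alone, as in your main reduction, works for every $n$.
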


\begin{proof} By reversing the orientation (and thus the orientation of the characteristic foliation), we can reduce to the case of the stable manifold of a positive Liouville hyperbolic orbit (or singularity). 

\vspace{3pt}

Fix a point $x$ in $W^s(\gamma)$ and a positive Liouville form $\lambda$ near $\gamma$. Let $Z$ be a characteristic vector field that is the Liouville vector field of $\lambda$ in a neighborhood $U$ of $x$ and let $\Phi$ be the corresponding characteristic flow. Since $\Phi$ preserves isotropics, we can apply the flow $\Phi$ for large time and assume that $p$ is in a neighborhood where $\lambda$ is positive Liouville. Then
\[
\mathcal{L}_Z\lambda = \lambda \qquad\text{and}\qquad \Phi_T^*\lambda = e^T \cdot \lambda
\]
On the other hand, if $v \in TW^s(\gamma)$ is a vector tangent to $W^s(\gamma)$ at $x$, then $|T\Phi_T(v)|$ is uniformly bounded for all $T > 0$. It follows that there is a constant $C$ such that
\[
|\lambda(v)| = e^{-T} \cdot |\Phi^*_T\lambda(v)| = e^{-T} \cdot |\lambda(T\Phi_T(v))| \le Ce^{-T}
\]
Taking the limit as $T \to \infty$ yields the desired result.\end{proof}

\noindent As a corollary, we acquire the following lemmas of Breen-Honda-Huang \cite{hh2019,b2021}. They state that hyperbolic closed orbits and singularities are either positive or negative Liouville, but not both. 

\begin{lemma}[Liouville Orbits] \label{lem:breen_divergence} Let $\gamma$ be a hyperbolic closed orbit of the characteristic foliation of a contact Hamiltonian $2n$-manifold $(\Sigma,\eta)$. Then $\gamma$ is either
\[\text{positive Liouville with $\on{ind}(\gamma) \le n-1$} \qquad\text{or}\qquad \text{negative Liouville with $\on{ind}(\gamma) \ge n$}\]
\end{lemma}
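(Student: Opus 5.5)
The plan is to combine three earlier results: the Hyperbolic Orbits lemma (every hyperbolic orbit is Liouville), Lemma \ref{lem:isotropic_stable} (stable or unstable manifolds are isotropic, depending on the sign), and the dimension bound of Lemma \ref{lem:isotropic_dimension}. Let $\gamma$ be a hyperbolic closed orbit of $\Sigma_\eta$. By the Hyperbolic Orbits lemma, there is a contact Hamiltonian form $\lambda$ that is Liouville on a neighborhood $U$ of $\gamma$. Shrinking $U$ to a connected tubular neighborhood, $d\lambda^n$ is a volume form on $U$, so $U$ is orientable. With respect to the orientation on $U$ (compatible with the fixed one on $\Sigma$ if $\Sigma$ is oriented), $d\lambda^n$ has constant sign. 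Hence $\gamma$ is positive or negative Liouville, and it remains to get the index bound in each case.

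\emph{Positive case.} Lemma \ref{lem:isotropic_stable} says $W^s(\gamma)$ is an isotropic (immersed) submanifold. Lemma \ref{lem:isotropic_dimension} then gives $\dim W^s(\gamma) \le n$. By Definition \ref{def:index}, this yields $\on{ind}(\gamma) = \dim W^s(\gamma) - 1 \le n-1$.

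\emph{Negative case.} Lemma \ref{lem:isotropic_stable} says $W^u(\gamma)$ is isotropic, so $\dim W^u(\gamma) \le n$ by Lemma \ref{lem:isotropic_dimension}. To convert this into a bound on the stable manifold, I use the hyperbolic splitting for the Poincar\'e return map on a $(2n-1)$-dimensional transversal. Hyperbolicity means the contracting and expanding eigenspaces of the return map have dimensions summing to $2n-1$. Each of $W^s(\gamma)$ and $W^u(\gamma)$ is one dimension larger than the corresponding eigenspace, because each also contains the flow direction along $\gamma$. Therefore
\[
\dim W^s(\gamma) + \dim W^u(\gamma) = 2n+1,
\]
so $\dim W^s(\gamma) \ge n+1$ and $\on{ind}(\gamma) \ge n$.

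\emph{Exclusivity.} The bounds $\on{ind}(\gamma)\le n-1$ and $\on{ind}(\gamma)\ge n$ are incompatible, so $\gamma$ cannot be both positive and negative Liouville. This justifies the ``but not both'' phrasing. The same argument should apply to hyperbolic singularities, using $\dim W^s+\dim W^u = 2n$ and no shift in the index.

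The step I expect to need the most care is the sign bookkeeping in the first paragraph. Positivity and negativity depend on an orientation, and the dichotomy is only meaningful once we know $d\lambda^n$ has constant sign near $\gamma$ for a fixed orientation. This is why I would first shrink to a connected neighborhood on which $d\lambda^n$ is nowhere zero. A second point to check is that Lemma \ref{lem:isotropic_dimension} applies to the possibly only immersed manifolds $W^s(\gamma)$ and $W^u(\gamma)$. This should be fine because its proof is entirely pointwise.
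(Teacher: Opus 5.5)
Your proposal is correct and takes the paper's route: the paper derives the lemma directly from Lemma \ref{lem:isotropic_stable}, Lemma \ref{lem:isotropic_dimension} and Definition \ref{def:index}, with the Hyperbolic Orbits lemma implicitly supplying that $\gamma$ is Liouville. You additionally write out the count $\dim W^s(\gamma)+\dim W^u(\gamma)=2n+1$, which turns the bound on $W^u(\gamma)$ into $\on{ind}(\gamma)\ge n$ in the negative case.

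One small caution about your closing aside, which lies outside this statement: for singularities the two bounds overlap at index $n$, so there exclusivity cannot come from the index and must instead come from the sign of $d\lambda^n$ at the singular point.
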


\begin{lemma}[Liouville Singularities] \label{lem:hh_divergence} Let $x$ be a hyperbolic singularity of the characteristic foliation of a contact Hamiltonian $2n$-manifold $(\Sigma,\eta)$. Then $x$ is either
\[\text{positive Liouville with $\on{ind}(x) \le n$} \qquad\text{or}\qquad \text{negative Liouville with $\on{ind}(x) \ge n$}\]
The sign (positive or negative) of a singularity can be computed via the eigenvalues of the linearization.\end{lemma}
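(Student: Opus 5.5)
The plan is to work entirely in a small neighborhood of the singularity $x$. At $x$ the form $\lambda$ vanishes, so by Definition \ref{def:contact_Hamiltonian_form} $d\lambda$ is symplectic near $x$. Hence $x$ is Liouville for any contact Hamiltonian form $\lambda$, with a sign, positive or negative, fixed by the orientation. The steps are: identify the sign with the sign of the trace of the linearization, show this trace is nonzero, and then read off the index bounds from the isotropy results already proved.

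\textbf{Step 1 (sign via divergence).} Fix a volume form $\mu$ and let $Z$ be a characteristic vector field with $\iota_Z\mu = \lambda\wedge d\lambda^{n-1}$. By (\ref{eq:divergence_equation}) we have $d\lambda^n = \on{div}(Z,\mu)\,\mu$. Since $Z(x)=0$, the divergence at $x$ equals $\on{tr}(DZ_x)$, the sum of the eigenvalues of the linearization. So $\lambda$ is positive Liouville near $x$ exactly when $\on{tr}(DZ_x)>0$, and negative Liouville exactly when it is negative. The sign is intrinsic to the oriented characteristic foliation. Indeed, replacing $Z$ by $fZ$ with $f>0$ gives $D(fZ)_x = f(x)DZ_x$, so the trace sign is unchanged. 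This yields the final sentence of the lemma: the sign is the sign of the sum of the eigenvalues of the linearization.

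\textbf{Step 2 (the trace is nonzero, so exactly one sign occurs).} This is the step needing a real computation, though it is short. Near $x$, let $V$ be the Liouville vector field of $\lambda$, defined by $\iota_V d\lambda=\lambda$. By Lemma \ref{lem:ker_lambda_description}, $Z = gV$ for a nowhere vanishing function $g$. The field $V$ satisfies $\mathcal{L}_V d\lambda = d\lambda$. Linearizing at the zero $x$, the matrix $A = DV_x$ satisfies $\omega(Au,w)+\omega(u,Aw)=\omega(u,w)$ with $\omega=d\lambda_x$. Equivalently, $A-\tfrac12\on{Id}$ is infinitesimally symplectic. Then its eigenvalues pair as $\nu, 1-\nu$, and $\on{tr}A = n$. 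Therefore $\on{tr}(DZ_x) = g(x)\,n \neq 0$. So $x$ is positive or negative Liouville but not both (with the appropriate sign convention). Hyperbolicity of $x$ guarantees that the stable and unstable manifolds exist.

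\textbf{Step 3 (index bounds).} If $x$ is positive Liouville, Lemma \ref{lem:isotropic_stable} shows $W^s(x)$ is isotropic. Lemma \ref{lem:isotropic_dimension} then gives $\on{ind}(x)=\dim W^s(x)\le n$. If $x$ is negative Liouville, $W^u(x)$ is isotropic. Hyperbolicity gives $\dim W^u(x) = 2n-\on{ind}(x)$, and this is at most $n$, so $\on{ind}(x)\ge n$.

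I expect the main obstacle to be bookkeeping of signs in Step 2. Under rescaling by a negative function, or when orientations change, both the sign of $g(x)$ and the orientation of the foliation flip. I would handle this by fixing the orientation of $\Sigma_\eta$ first and only then computing traces.
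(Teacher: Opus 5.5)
Your proof is correct, and its core (Step 3) is exactly the paper's argument. The paper derives the lemma directly from Lemma~\ref{lem:isotropic_stable}, Lemma~\ref{lem:isotropic_dimension} and Definition~\ref{def:index}, relying on the fact that a singularity is automatically Liouville of a definite sign, and it leaves the eigenvalue claim implicit; your Steps 1--2 supply that claim. Step 2 is more than you need, since Step 1 already gives $\on{tr}(DZ_x)\,\mu_x = (d\lambda^n)_x \neq 0$ by Definition~\ref{def:contact_Hamiltonian_form}. Still, your pairing of the eigenvalues as $\nu, 1-\nu$ is a genuine bonus: it yields the index bounds by pure linear algebra, without appealing to the isotropy of stable manifolds.
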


\noindent The two lemmas follow immediately from Lemma \ref{lem:isotropic_stable}, Lemma \ref{lem:isotropic_dimension} and Definition \ref{def:index}.

\begin{figure}[h]
    \centering
    \includegraphics[width=.8\linewidth]{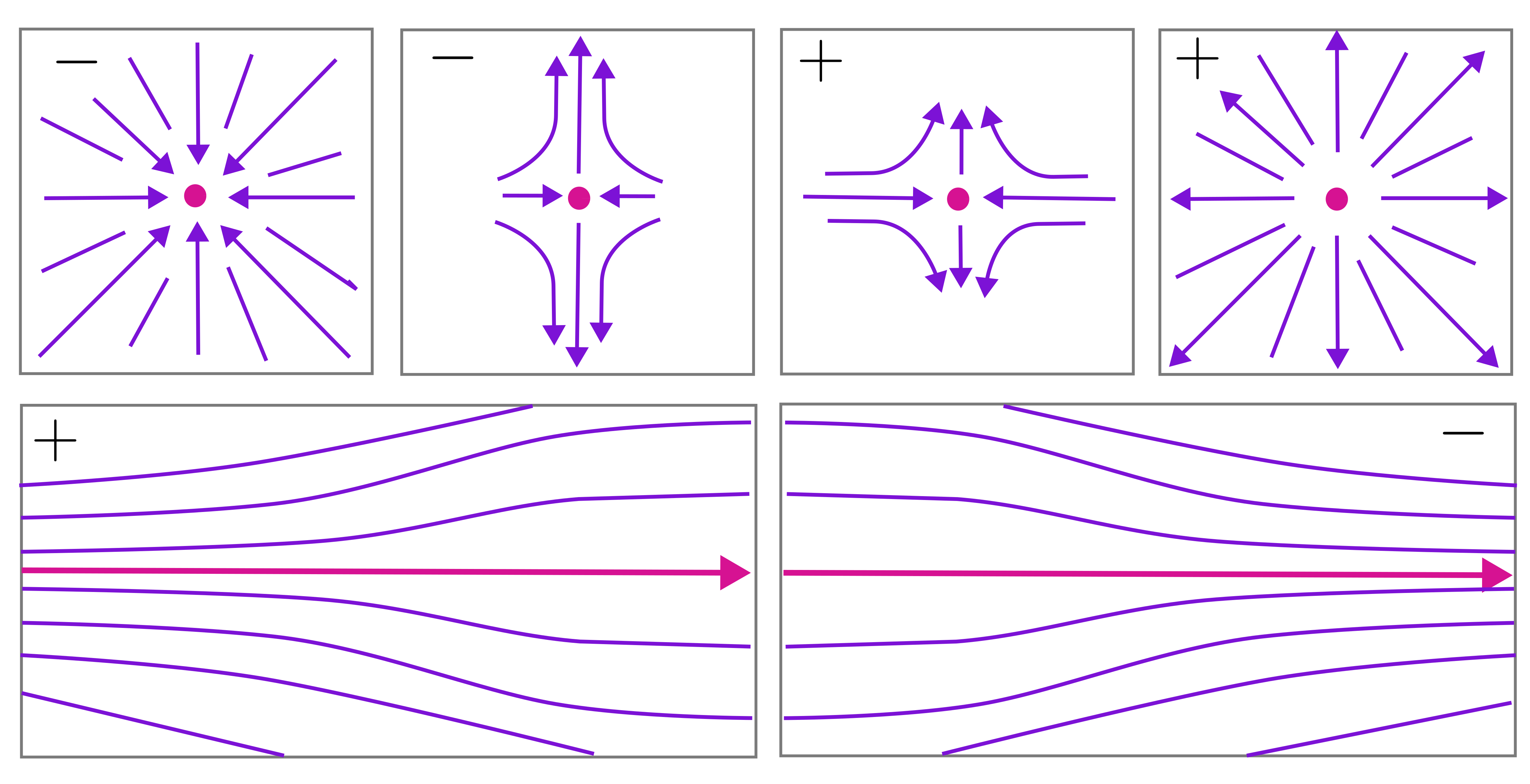}
    \caption{The different types of singularities and closed orbits in dimension two. The index one singularities are distinguished by the sign of the divergence.}
    \label{fig:critical_point_examples}
\end{figure}

\begin{remark}[Symplectic Case] In the case of symplectic diffeomorphisms and flows on a symplectic manifold $(X,\Omega)$, any hyperbolic fixed point or closed orbit must have index given by half the dimension of $X$. This follows immediately from the corresponding statement for hyperbolic symplectic matrices. In the conformally symplectic setting, there is no such restriction.
\end{remark}

\subsection{No Anosov Characteristic Foliations} \label{subsec:no_anosovs} So far, we have introduced a special class of conformally symplectic dynamical system: characteristic foliations (or flows) on contact Hamiltonian manifolds. Here we discuss a non-trivial constraint on the dynamics of characteristic foliations. Precisely, we prove that characteristic flows cannot be Anosov, generalizing a recent result of Asaoka-Mitsumatsu \cite{asaoka2025there} for contactomorphisms. 

\begin{thm}[No Anosovs] \label{thm:no_Anosovs} Let $\Phi$ be a characteristic flow of a closed contact Hamiltonian manifold $(\Sigma,\eta)$. Then $\Phi$ is not an Anosov flow.
\end{thm}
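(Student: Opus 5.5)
Assume for contradiction that the characteristic flow $\Phi$, generated by a characteristic vector field $Z$, is Anosov with splitting $T\Sigma = E^s \oplus \on{span}(Z) \oplus E^u$. The plan is to show that some contact Hamiltonian form for $\eta$ is a global positive (or negative) Liouville form on the closed manifold $\Sigma$. That is impossible, since then $d\lambda^n$ would be an exact volume form on $\Sigma$ and $\int_\Sigma d\lambda^n = 0$ by Stokes.

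\textbf{Step 1: setup.} Since $Z$ has no zeros, $\eta$ has no singularities, so $\lambda \wedge d\lambda^{n-1} \neq 0$ everywhere and $(\Sigma,\eta)$ is even contact (Example \ref{ex:even_contact}). By Lemma \ref{lem:ker_lambda_description} we have $\mathcal{L}_Z\lambda = \iota_Z d\lambda = f\lambda$, hence $\Phi_T^*\lambda = F_T\lambda$ with $\log F_T(x) = \int_0^T f(\Phi_t x)\,dt$. Since $Z \in \eta$, the flow acts on the normal bundle $N = T\Sigma/\on{span}(Z)$ of rank $2n-1$. It preserves the hyperplane subbundle $\eta/\on{span}(Z)$ of rank $2n-2$, on which $d\lambda$ descends to a nondegenerate form (Lemma \ref{lem:ker_description}). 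A short computation gives $\Phi_T^*d\lambda = F_T\,d\lambda$ on $\eta$, so this subbundle is a conformally symplectic linear cocycle with factor $F_T$. The flow also acts on the quotient line $T\Sigma/\eta$ by $F_T$.

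\textbf{Step 2: Lyapunov bookkeeping.} Fix an ergodic invariant measure $\nu$, for instance one supported on a periodic orbit. Let $a = \int f\,d\nu$ be the exponent of $F_T$. For a conformally symplectic cocycle, the Lyapunov exponents on $\eta/\on{span}(Z)$ come in $n-1$ pairs $(\chi, a-\chi)$, by the standard linear-algebra pairing for conformally symplectic matrices. The exponents of $N$ are these $2n-2$ values together with $a$. Let $k = \dim E^s$, which is constant on the connected manifold $\Sigma$. The Anosov property says exactly $k$ exponents are negative and none vanish.

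Suppose $a = 0$. Then $a$ itself is a zero exponent, which is a contradiction. Suppose $a > 0$. Then each pair has at most one negative member, so $k \le n-1$. Suppose $a < 0$. Then each pair has at most one positive member, so $2n-1-k \le n-1$, i.e.\ $k \ge n$. This is the measure-theoretic version of Lemma \ref{lem:breen_divergence}. Since $k$ is independent of $\nu$, the sign of $a$ is the same for every invariant measure. Reversing time if necessary, assume $\int f\,d\nu > 0$ for every invariant probability measure $\nu$.

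\textbf{Step 3: averaging and conclusion.} By weak-$*$ compactness of invariant measures, there is a $T$ with $\log F_T > 0$ everywhere. The argument: otherwise pick points $x_j$ with $\frac{1}{j}\log F_j(x_j) \le 0$, and a limit of the empirical measures along their orbit segments is an invariant measure with $\int f \le 0$. Now replace $\lambda$ by $\lambda' = e^{h}\lambda$ with $h = \frac{1}{T}\int_0^T \log F_t \, dt$. The standard averaging trick gives
\[
\mathcal{L}_Z\lambda' = \Big(f + Z h\Big)\lambda' = \tfrac{1}{T}\log F_T \cdot \lambda',
\]
with coefficient $g = \tfrac1T \log F_T > 0$. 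Take $\mu$ with $\iota_Z\mu = \lambda'\wedge d\lambda'^{n-1}$, i.e.\ $Z$ is characteristic for $\lambda'$ after rescaling. Then, as in the Divergence Criterion, $d\lambda'^n = d(\iota_Z \mu) = \on{div}(Z,\mu)\,\mu$. Moreover $\on{div}(Z,\mu)$ has the sign of $g$, computed from $\mathcal{L}_Z(\lambda'\wedge d\lambda'^{n-1}) = n g\,\lambda'\wedge d\lambda'^{n-1}$. So $d\lambda'^n$ is a nowhere-vanishing exact top form on closed $\Sigma$, which contradicts Stokes.

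\textbf{Main obstacle.} I expect the main difficulty to be Step 2: justifying the exponent pairing on $\eta/\on{span}(Z)$ and relating it cleanly to the Anosov splitting of $N$, which need not be compatible with $\eta$. If Oseledets-style arguments prove awkward, I would first do the computation only at hyperbolic periodic orbits using Lemma \ref{lem:breen_divergence} directly. I would then pass to all invariant measures using density of periodic measures for Anosov flows on each basic set, in the sense of the spectral decomposition of the non-wandering set.
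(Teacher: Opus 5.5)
Your argument is correct, and it takes a genuinely different route from the paper. The paper reverses time so that $\dim E^s \ge n$. It uses the identity $\Phi_T^*\mu = F_T^n\mu$ (total volume cannot shrink) to produce a single point where $E^s \subset \eta$. It then propagates this along the whole weak-stable leaf via the invariant-bundle lemma adapted from Asaoka--Mitsumatsu (left as an exercise), and concludes with the isotropic dimension bound of Lemma \ref{lem:isotropic_dimension}.

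You replace that geometric bound by its linearized, ergodic counterpart. The pairing $\chi \leftrightarrow a-\chi$ of Lyapunov exponents for a conformally symplectic cocycle (Wojtkowski--Liverani; alternatively, normalize by $F_T^{-1/2}$ to reduce to the symplectic case) ties the sign of $a(\nu)=\int f\,d\nu$ to $\dim E^s$ for every ergodic $\nu$. Weak-$*$ compactness of the invariant measures then makes this uniform. The paper's proof uses only uniform estimates but rests on the propagation lemma. Yours avoids that lemma at the cost of Oseledets-type machinery. It also gives a more conceptual conclusion: uniform hyperbolicity would make all of $\Sigma$ one positive (or negative) Liouville region. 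This matches the invariant-measure description of positive and negative chain components in Section \ref{sec:questions_and_conjectures}.

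The obstacle you anticipate does not arise. The Lyapunov spectrum of the cocycle on $N$ is intrinsic, so you may compute it once from the invariant flag $\eta/\on{span}(Z) \subset N$ and once from $E^s \oplus E^u$. The flag is block-triangular, so its spectrum is the union of the spectra on $\eta/\on{span}(Z)$ and on $T\Sigma/\eta$. No compatibility between $\eta$ and the Anosov splitting is required.

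The same computation shows that $a(\nu)$ is itself an exponent of $N$, so $|a(\nu)| \ge B$ uniformly. This is exactly what your periodic-orbit fallback would need, since limits of periodic measures with merely positive $a$ only give $\int f\,d\nu \ge 0$.

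Finally, the averaging step is optional. Once $\log F_T > 0$ everywhere, the chain $\int_\Sigma \mu = \int_\Sigma \Phi_T^*\mu = \int_\Sigma F_T^n\mu > \int_\Sigma \mu$ is already a contradiction.
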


\noindent Anosov flows are a fundamental class of smooth dynamical system that include geodesic flows on hyperbolic manifolds and suspensions of hyperbolic torus automorphisms. They have been central objects of study in dynamics since the work of Anosov \cite{a1963} and Smale \cite{smale1967differentiable}, and thus their exclusion from characteristic flows is indeed a major constraint.

\vspace{3pt}

For the proof of Theorem \ref{thm:no_Anosovs}, we will need to recall some basic facts about Anosov flows. For a more complete reference, we refer the reader to the monograph of Fisher-Hasselblatt \cite{fh2019}. 

\begin{definition}[Anosov] A smooth flow $\Phi$ generated by a vector field $V$ on a closed manifold $M$ is \emph{Anosov} if there are $\Phi$-invariant continuous sub-bundles
\[
E^s(\Phi) \subset TM \qquad\text{and}\qquad E^u(\Phi) \subset TM
\]
called the \emph{stable bundle} and the \emph{unstable bundle}, such that the tangent bundle splits as
\[
TM = \on{span}(V) \oplus E^s(\Phi) \oplus E^u(\Phi)
\]
and such that there are constants $A,B > 0$ such that, for all sufficiently large $T > 0$, we have
\[
|T\Phi_T(v)| \le A\exp(-BT) \text{ for }v \in E^s(\Phi) \qquad\text{and}\qquad |T\Phi_{-T}(v)| \le A\exp(-BT) \text{ for }v \in E^u(\Phi)
\]\end{definition}

\begin{definition}[Weak-(Un)stable Foliations] The \emph{weak-stable foliation} $W^s(\Phi)$ and the \emph{weak-unstable foliation} $W^u(\Phi)$ are the continuous foliations with smooth leaves tangent to the distributions
\[
\on{span}(V) \oplus E^s(\Phi) \qquad\text{and}\qquad  \on{span}(V) \oplus E^u(\Phi)
\]
We denote the leaves of the weak-stable and weak-unstable foliations containing a point $p$ by
\[
W^s(\Phi,p) \qquad\text{and}\qquad W^u(\Phi,p)
\]
\end{definition}

\begin{remark} The stable and unstable bundles, and the corresponding weak-stable and weak-unstable foliations, of an Anosov flow are unique.
\end{remark}

\begin{lemma}[Invariant Bundles] \label{lem:invariant_bundles} Let $\Phi$ be an Anosov flow on a manifold $M$ and let $F \subset TM$ be a $\Phi$-invariant $k$-plane field. Then
\[
E^s(\Phi) \subset F \text{ at a point $p$}\qquad\text{if and only if}\qquad E^s(\Phi) \subset F \text{ along all of $W^s(\Phi,p)$}
\]
Then $E^s(\Phi) \subset F$ along the entire weak-stable leaf $W^s(\Phi,p)$ containing $p$. \end{lemma}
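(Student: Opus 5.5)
The plan is to show that the set $S = \{x \in M : E^s(\Phi)_x \subset F_x\}$ is saturated by weak-stable leaves. I will assume $F$ is continuous, as the stable bundle is. The direction ``along all of $W^s(\Phi,p)$ $\Rightarrow$ at $p$'' is trivial. For the other direction I use the standard decomposition of a weak-stable leaf:
\[
W^s(\Phi,p) = \bigcup_{t} W^{ss}(\Phi_t(p)).
\]
Here $W^{ss}(x)$ is the strong-stable leaf, consisting of the points $q$ with $\operatorname{dist}(\Phi_T(q),\Phi_T(x)) \to 0$ as $T\to\infty$; it is tangent to $E^s$. Since $F$ and $E^s$ are both $\Phi$-invariant, $S$ is $\Phi$-invariant, so $S$ contains the whole orbit of $p$. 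It therefore suffices to show that $q \in W^{ss}(p)$ and $p\in S$ imply $q \in S$.

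To compare $p$ and $q$, I would dualize. Let $F^0 \subset T^*M$ be the annihilator of $F$. Then $E^s \subset F$ at $x$ if and only if every $\omega \in F^0_x$ vanishes on $E^s_x$. Using the splitting $TM = \operatorname{span}(V)\oplus E^s\oplus E^u$, put a continuous metric on $T^*M$ and define
\[
\theta(x) = \sup\{\, |\omega|_{E^s_x}| \;:\; \omega \in F^0_x,\ |\omega| = 1 \,\}.
\]
This is a continuous function on the compact manifold $M$ (since $F$ and $E^s$ are continuous), hence uniformly continuous, and $S = \theta^{-1}(0)$. The flow acts on covectors by $\omega \mapsto \omega\circ T\Phi_{-T}$, and this action preserves $F^0$ because $F$ is invariant.

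The key step is the dynamical estimate: if $\theta(q) > 0$, then $\theta(\Phi_T(q)) \to 1$ as $T\to\infty$. To see this, pick $\omega\in F^0_q$ with nonzero restriction to $E^s$. Split $\omega = a + b$, where $a$ vanishes on $\operatorname{span}(V)\oplus E^u$ and $b$ vanishes on $E^s$.
\begin{itemize}
\item The transport of $a$ grows like $A^{-1}e^{BT}$, because $T\Phi_{-T}$ expands $E^s$ at that rate.
\item The transport of $b$ stays bounded: on $E^u$ it is contracted by $T\Phi_{-T}$, and on $\operatorname{span}(V)$ it is bounded because $V$ is invariant and bounded above and below on compact $M$.
\end{itemize}
After normalizing, the transported covector is therefore dominated by its $E^s$-part, which gives $\theta(\Phi_T(q)) \to 1$. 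Getting these growth estimates correct for the dual action, including the cross-terms from the non-orthogonal splitting, is where I expect the main care to be needed. It is routine once a metric adapted to the splitting is fixed.

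To conclude, suppose $p \in S$ but $q\in W^{ss}(p)$ has $\theta(q) > 0$. By invariance, $\theta(\Phi_T(p)) = 0$ for all $T$. On the other hand, $\theta(\Phi_T(q)) \to 1$ while $\operatorname{dist}(\Phi_T(p),\Phi_T(q)) \to 0$. This contradicts the uniform continuity of $\theta$. Hence $q\in S$, and combined with the orbit invariance, $E^s(\Phi)\subset F$ along all of $W^s(\Phi,p)$.
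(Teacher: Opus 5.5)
Your proof is correct, and there is no proof in the paper to compare it with: the lemma is only stated and then left as an exercise, to be done by adapting Asaoka--Mitsumatsu's argument for Anosov diffeomorphisms \cite{asaoka2025there}. Your write-up supplies that argument, and the dual formulation is the natural one. The condition $E^s \subset F$ says $F^0 \subset (E^s)^0$. Under $\omega \mapsto \omega \circ T\Phi_{-T}$ as $T \to +\infty$, the summand $(\operatorname{span}(V)\oplus E^u)^0$ expands uniformly while $(E^s)^0$ stays bounded. So an invariant subbundle of $T^*M$ cannot lie in $(E^s)^0$ at $p$ but not at a point forward-asymptotic to $p$, since that would contradict the uniform continuity of $\theta$.

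The only flow-specific point relative to the diffeomorphism case is the neutral direction $\operatorname{span}(V)$. Your observation that $b_T(V(\Phi_T q)) = b(V(q))$ handles it. The cross-terms you were worried about also disappear once you use the continuous metric making $\operatorname{span}(V)$, $E^s$, $E^u$ mutually orthogonal: then $|\omega_T|^2 = |a_T|^2 + |b_T|^2$ and $|\omega_T|_{E^s}| = |a_T|$. You also only need $\theta(\Phi_T q)$ to stay bounded away from $0$, not to tend to $1$.

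Your explicit continuity assumption on $F$ is genuinely needed, not a weakness of the argument. Suppose $\dim E^u \geq 1$. Take $F = \operatorname{span}(V)\oplus E^s$ everywhere except along one non-periodic orbit through a point $q \in W^{ss}(p)$ off the orbit of $p$. Along that orbit, let $F$ be the flow-transport of a plane at $q$ not containing $E^s_q$. This $F$ is invariant, but the conclusion fails.

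In the application to Theorem~\ref{thm:no_Anosovs} the assumption holds. An Anosov characteristic vector field is nowhere zero, so $\lambda$ is nowhere zero, and $F = \eta = \ker(\lambda)$ is a smooth hyperplane field. There $F^0$ is the line spanned by $\lambda$, so $\theta = |\lambda|_{E^s}|/|\lambda|$. Your transported covector $\lambda_q \circ T\Phi_{-T}$ is just a multiple of $\lambda_{\Phi_T q}$, because $\Phi_T^*\lambda$ is a multiple of $\lambda$.
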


\begin{exercise} Prove Lemma \ref{lem:invariant_bundles} following the proof for Anosov diffeomorphisms given in \cite{asaoka2025there}.
\end{exercise}

\begin{proof} (Theorem \ref{thm:no_Anosovs}) Suppose for contradiction that $\Phi$ is an Anosov characteristic flow on $(\Sigma,\eta)$. Fix a contact Hamiltonian form $\lambda$ and a volume form $\mu$ such that
\[
\iota_Z\mu = \lambda \wedge d\lambda^{n-1}
\]
Since $\Phi$ is characteristic, there is a smooth family of functions
\[
F:\R \times \Sigma \to \R \qquad\text{such that}\qquad \Phi^*_T\lambda = F_T \cdot \lambda
\]
We assume (after possibly reversing orientations) that the leaves of the stable foliation are more than half the dimension of $\Sigma$. Consider the set
\[
S = \{p \in \Sigma \; : \; E^s(\Phi) \subset \eta \text{ at the point }p\}
\]
We claim that $S$ is non-empty. Indeed, suppose otherwise. Then at every point $p$, we can choose a vector $W \in E^s(\Phi)$ at $p$ such that $\lambda(W) = 1$. Then we compute that
\[
F_T(p) = F_T(p) \cdot |\lambda_p(W)| = |\Phi^*_T\lambda(W)_p| = |\lambda(T\Phi_T(W))_p| \le |\lambda_p| \cdot |T\Phi_T(W)_p| \le A\exp(-BT)
\]
for some $A,B > 0$. Since $Z$ generates the flow $\Phi$, we can compute that
\[
\iota_Z(\Phi_T^*\mu) = \Phi_T^*(\iota_Z\mu) = \Phi^*_T(\lambda \wedge d\lambda^n) = F_T^n \cdot \lambda \wedge d\lambda^n 
\]
On the other hand, $Z$ is nowhere vanishing by the Anosov assumption. Therefore the interior product with $Z$ is a linear isomorphism from volume forms to multiples of $\lambda \wedge d\lambda^{n-1}$. It follows that we must have
\[
\Phi_T^*\mu = F_T^n \cdot \mu \qquad\text{and thus}\qquad \int_\Sigma \mu = \int_\Sigma \Phi_T^*\mu \le A^n \exp(-nBT) \cdot \int_\Sigma \mu
\]
For sufficiently large $T$, this is clearly impossible. Thus $S$ must be non-empty.

\vspace{3pt}

Finally, take any point $p$ in $S$. By Lemma \ref{lem:invariant_bundles}, the entire stable leaf $W^s(\Phi,p)$ is tangent to $\eta$. In particular, $W^s(\Phi,p)$ is isotropic. However, the dimension must then be half of that of $\Sigma$ or less. This is a contradiction.\end{proof}

\section{Convexity And Characteristic Dynamics} \label{sec:convexity}

A contact Hamiltonian manifold is convex if the contactization admits a translation invariant contact structure. More precisely, we adopt the following definition.

\begin{definition}[Convex Structure] \label{def:convex_structure} A contact Hamiltonian manifold $(\Sigma,\eta)$ is \emph{convex} if there is a smooth function $u$ such that $(u,-du)$ is a framing, or equivalently
\[
u ds + \lambda \qquad\text{is a contact form on $C\Sigma = [-\epsilon,\epsilon] \times \Sigma$}
\]
We refer to the function $u$ as a \emph{framing function} for the (convex) contact Hamiltonian manifold.\end{definition}

Convexity is the direct analogue of stability for Hamiltonian manifolds and hypersurfaces in symplectic manifolds \cite{ce2015}. Alternatively, Definition \ref{def:convex_structure} is simply the intrinsic formulation of convexity for hypersurfaces in contact manifolds, in the sense of Giroux \cite{g1991}.

\begin{definition}[Convex Hypersurface] \label{def:convex_hypersurface} A hypersurface $\Sigma$ in a contact manifold $(Y,\xi)$ is \emph{convex} if there exists a contact vector field $V$ that is transverse to $\Sigma$.
\end{definition}

\noindent Convex hypersurfaces are a fundamental tool in contact topology. In this section, we discuss recent breakthroughs in the theory of convexity from a dynamical perspective, partly using the intrinsic language of contact Hamiltonian manifolds. These results are variously due to Giroux \cite{g1991,giroux2002geometrie}, Honda-Huang \cite{hh2018,hh2019}, Breen \cite{b2024,b2021}, Breen-Honda-Huang \cite{bhh2023} and the author \cite{jc2024}. 

\subsection{History And Motivation} Although the definition of convex hypersurfaces is fairly natural, their key role in contact topology may not be completely obvious at first glance to the uninitiated reader. Let us take a brief motivational and historical detour before proceeding further.

\vspace{3pt}

Convex hypersurfaces were introduced by Giroux \cite{g1991,giroux2002geometrie} in the early 1990s, who established many of their fundamental properties in dimension three. Giroux was originally motivated by the notion of a convex contact manifold introduced by Eliashberg-Gromov \cite[Def 3.5.A]{eliashberg1991convex}. This is a contact manifold that admits a Morse function that is contact in the following sense (cf. \cite{hh2019,sackel2019getting}).

\begin{definition}[Contact Morse] A Morse function $F$ on a contact manifold $(Y,\xi)$ is \emph{contact} if there is a gradient-like vector field $V$ that is contact (i.e. that preserves the contact structure).
\end{definition}

\noindent Morse theory plays a fundamental role in smooth topology, and thus it is clearly desirable to have a well-developed Morse theory for contact manifolds. However, it was originally not even clear that contact Morse functions exist on any contact manifold. This problem was posed explicitly by Eliashberg-Gromov in \cite[p. 34]{eliashberg1991convex}.

\begin{question} Does every closed contact manifold have a contact Morse function?
\end{question}

\noindent In \cite{g1991}, Giroux resolved this problem in dimension three using convex surface theory.

\begin{thm}[Giroux] \label{thm:existence_contact_Morse_dim_3} Every closed contact 3-manifold $(Y,\xi)$ admits a contact Morse function.
\end{thm}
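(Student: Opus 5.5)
Giroux's theorem asks for a contact Morse function on every closed contact $3$-manifold. I would get it by cutting $Y$ into simple pieces along convex surfaces and using that a gradient-like contact vector field near a convex surface is essentially the vertical vector field $\partial_s$.

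\emph{Step 1 (model near a convex surface).} Suppose $\Sigma \subset (Y,\xi)$ is convex. By Lemma \ref{lem:std_nbhd} and Definition \ref{def:convex_structure}, a neighborhood of $\Sigma$ is contactomorphic to $[-\epsilon,\epsilon]_s \times \Sigma$ with contact form $u\,ds + \lambda$. In this model $\partial_s$ is a contact vector field, and it is gradient-like for the function $s$, which has no critical points.

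\emph{Step 2 (decompose $Y$).} Take a Morse function $f$ on $Y$ and perturb it so that every critical point has its own critical level. Choose regular levels $c_0 < c_1 < \dots$ so that each slab $\{c_{i-1} \le f \le c_i\}$ contains exactly one critical point. The level surfaces $\Sigma_i = f^{-1}(c_i)$ are closed surfaces. By Giroux's genericity theorem in dimension three, a $C^\infty$-small isotopy makes each $\Sigma_i$ convex: one perturbs the characteristic foliation to be Morse--Smale, and Morse--Smale foliations are divided by a dividing set and hence convex. This is the one place where dimension three is really used. I take it as the known input from \cite{g1991}, since the survey has not proved it.

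\emph{Step 3 (each slab).} Each slab is now a cobordism between convex surfaces containing a single handle. I would build, on each slab, a contact vector field that agrees with the vertical field $\partial_s$ near the two boundary surfaces from Step 1. Inside the slab it should be gradient-like for a function with exactly one nondegenerate critical point of the right index. For index $0$ or $3$, use the radial contact vector field on a Darboux ball; its boundary sphere is convex with one dividing circle. For index $1$ or $2$, use Giroux's contact handles: take a standard contact neighbourhood of a Legendrian arc or disc attached along the dividing set. Contact vector fields correspond to contact Hamiltonians, so vector fields on neighbouring slabs are glued by gluing their Hamiltonians with cutoff functions near the convex surfaces. The gluing is harmless there, because in the collar model every nearby contact field is still transverse to the slices $\{s\} \times \Sigma$ and hence gradient-like for $s$. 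Adding the constants $c_i$ to the slab functions then gives a global Morse function with a gradient-like contact vector field.

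\emph{Main obstacle.} The hardest step is Step 3: within a single slab, one must show the contact structure really is the standard handle attachment. Convex surface theory reduces this to comparing dividing sets (Giroux flexibility). For the $1$- and $2$-handles one may have to add extra Legendrian realizations and cancelling pairs of handles to make the dividing sets match. In practice I would first rearrange the decomposition, for instance via an open book or a triangulation by convex spheres, so that only balls and standard handle pieces occur.
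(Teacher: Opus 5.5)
\paragraph{Gap in Step 3.}
Step 3 is where the argument breaks, and the failure is not confined to the $1$- and $2$-handles. Making the finitely many cut surfaces $\Sigma_i$ convex says nothing about $\xi$ inside a slab. In general there is no contact vector field on a slab that is gradient-like with only the one critical point of $f$ the slab contains.

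The index-$0$ case already fails. Suppose $\xi$ is overtwisted, and choose $f$ so that $\{f\le c_1\}$ is a ball containing an overtwisted disk, with the minimum as its only critical point. The $C^\infty$-small perturbation that makes the boundary convex keeps the disk inside. Now take any contact vector field on this ball that has a single source and points outward along the boundary. For large $T$, its backward flow $\phi_{-T}$ is a contact embedding of the whole ball into an arbitrarily small neighbourhood of the minimum, hence into a Darboux ball. Darboux balls are tight, so this is a contradiction.

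The same issue appears for product regions. If every level of a product region is transverse to one contact vector field, the region is swept out by that flow, so it is a piece of a vertically invariant neighbourhood. In particular its two ends have isotopic dividing sets. Nothing in Steps 1--2 forces the dividing sets of $\Sigma_{i-1}$ and $\Sigma_i$ to differ by exactly one standard contact handle. So a contact Morse function must in general have more critical points than $f$, and producing the extra cancelling pairs is the real content of the theorem. Your ``Main obstacle'' paragraph names the difficulty but gives no mechanism for overcoming it.

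\paragraph{The missing idea.}
The paper describes Giroux's strategy as controlling the \emph{whole} one-parameter family of level sets $f^{-1}(t)$, not finitely many of them. One analyzes and modifies their characteristic foliations, which may change $f$, so that all but finitely many levels are convex. At each exceptional level the obstruction is a retrograde saddle--saddle connection, running from a negative to a positive singularity, which no dividing set can accommodate. Passing through it is a bypass attachment, and it is absorbed by inserting a cancelling pair of critical points of index $1$ and $2$.

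On a $t$-interval where every level is convex, local contact vector fields transverse to the levels patch together using cutoffs of the form $\rho_j(f)$. The correction term in $X_{\sum_j \rho_j H_j}$ lies in $\xi$ and is annihilated by $df$, so transversality survives. This, not $C^0$-closeness, is also the correct justification for your gluing remark.

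Your fallback of a triangulation could be turned into a genuinely different proof. One would use a Giroux contact cell decomposition: Legendrian $1$-skeleton, $2$-cells whose boundaries have Thurston--Bennequin invariant $-1$, and $3$-cells inside Darboux balls. Eliashberg's uniqueness of tight contact structures on $B^3$ then identifies the $3$-cells as standard $3$-handles. That argument rests on different deep inputs, and the proposal does not carry it out.
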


\noindent In smooth topology, a Morse function $F$ is equivalent to a handlebody decomposition, which itself may be viewed as a 1-parameter family of manifolds, the regular level sets $\Sigma(a) = F^{-1}(a)$, with a discrete set of simple changes (surgeries coming from handle attachments) that occur when the parameter crosses a critical value. It is simple to see that
\[\Sigma(a) \subset (Y,\xi) \text{ is a convex hypersurface if $F$ is a contact Morse function}
\]
The key observation of Giroux was that it is actually simpler to construct this family of convex surfaces in dimension three, rather than constructing the contact Morse function directly. We will discuss Giroux's proof method later in this section.

\vspace{3pt}

\begin{figure}[h]
    \centering
    \includegraphics[width=\linewidth]{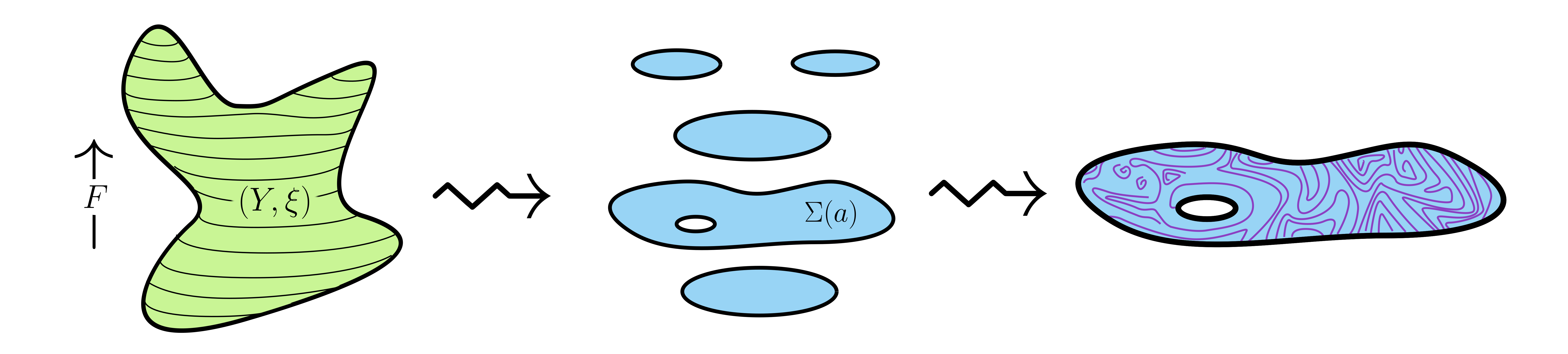}
    \caption{Giroux's strategy for constructing contact Morse functions used an analysis of the family of level sets given by a Morse function and specifically the analysis (and modification) of their characteristic foliations. }
    \label{fig:contact_Hamiltonian_examples}
\end{figure}

Following the work of Giroux, convex surfaces developed into the fundamental tool in the theory of contact 3-manifolds. Landmark applications include the Giroux correspondence between open  book decompsotions and contact structures in dimension three due to Giroux \cite{giroux2002geometrie}; the finiteness of torsion free contact structures due to Colin-Giroux-Honda \cite{cgh2009}; the discovery of 3-manifolds with no tight contact structures \cite{e2001} and tight contact structures with no fillings \cite{eh2002} by Honda-Etnyre; and solutions to many classification problems \cite{h1999,h2000,y1997,cm2020,cgh2003}. We will review many of Giroux's key results later in this section. 

\vspace{3pt}

In dimensions five and higher, convex hypersurfaces were quite poorly understood until the recent, groundbreaking work of Honda-Huang \cite{hh2018,hh2019} who established very general existence and approximation results for convex hypersurfaces. An immediate application of their work was a proof of Theorem \ref{thm:existence_contact_Morse_dim_3} in any dimension via a higher dimensional adaptation of Giroux's strategy. Their work also lead to further applications, such as a well developed handlebody theory for convex hypersurfaces, including a higher dimensional theory of bypasses, and the completed proof of the Giroux correspondence between open book decompositions and contact structures in all dimensions by Breen-Honda-Huang \cite{bhh2023}. Due to space constraints, we will focus on only a small part of this story. However, we encourage the reader to investigate these topics further in the references \cite{hh2018,hh2019,bhh2023,b2021}.

\subsection{Dividing Set And Liouville Halves} We now return to a more formal discussion of convex contact Hamiltonian manifolds (or equivalently, convex hypersurfaces). A key feature of convex hypersurfaces is the existence of a dividing set.

\begin{definition}[Dividing Set] The \emph{dividing set} $\Gamma$ of a convex contact Hamiltonian manifold $(\Sigma,\eta)$ with respect to a framing function $u$ is the hypersurface
\[
\Gamma = \{u = 0\} \subset \Sigma
\]
The regions $\Sigma_+$ and $\Sigma_-$ where $u$ is non-negative and non-positive are respectively called the \emph{positive} and \emph{negative} halves of $\Sigma$. We have a splitting
\[
\Sigma = \Sigma_+ \cup_\Gamma \Sigma_-
\]\end{definition}

\begin{figure}[h]
    \centering
    \includegraphics[width=.7\linewidth]{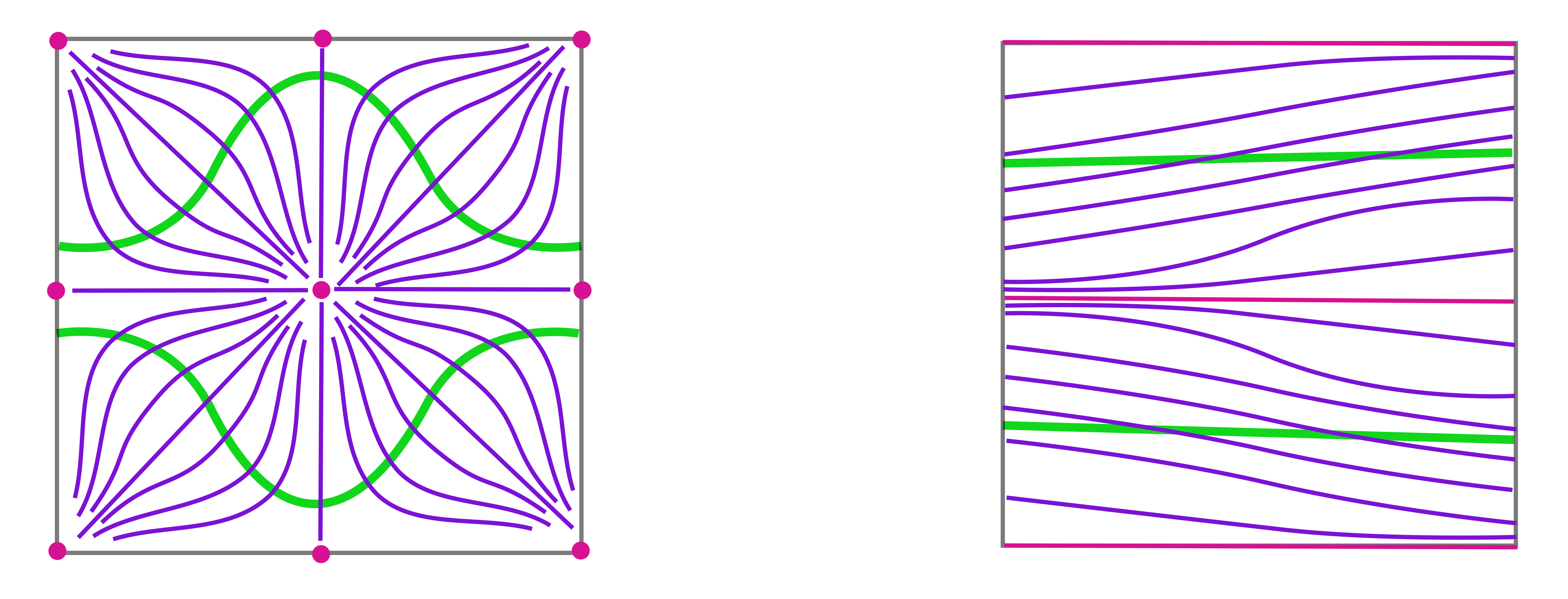}
    \caption{Characterstic foliations on the torus with dividing sets in green. }
    \label{fig:dividing_sets}
\end{figure}

\begin{lemma}[Dividing Set Is Contact] The dividing set $\Gamma \subset \Sigma$ is a contact sub-manifold transverse to the characteristic foliation that is independent of the framing function up to isotopy.
\end{lemma}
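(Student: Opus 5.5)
The plan is to work directly with the volume form condition in Definition \ref{def:convex_structure}. Since $(u,-du)$ is a framing, we have
\[
\mu := u \cdot d\lambda^n - n\, du \wedge \lambda \wedge d\lambda^{n-1} \quad\text{is a volume form on $\Sigma$.}
\]

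\emph{Transversality.} Along $\Gamma = \{u=0\}$ this reduces to $-n\, du\wedge\lambda\wedge d\lambda^{n-1} \neq 0$. Two consequences follow at once. First, $du \neq 0$ along $\Gamma$, so $\Gamma$ is a smooth closed hypersurface. Second, $\lambda\wedge d\lambda^{n-1}\neq 0$ along $\Gamma$, so by Lemma \ref{lem:ker_description} the characteristic foliation has no singularities on $\Gamma$.

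Let $Z$ be the characteristic vector field of $\lambda$ and $\mu$, so that $\iota_Z\mu = \lambda\wedge d\lambda^{n-1}$. Then
\[
du\wedge\iota_Z\mu = (du(Z))\,\mu - \iota_Z(du\wedge\mu) = du(Z)\,\mu,
\]
since $du\wedge\mu$ is a $(2n+1)$-form and hence vanishes. Therefore $du(Z)\neq 0$ along $\Gamma$, which says exactly that $Z$ is transverse to $\Gamma$. Lemma \ref{lem:contact_transversals} then shows that $\Gamma$ is a contact submanifold with contact structure $\eta\cap T\Gamma$. In fact the computation gives more: along $\Gamma$ we have $\mu = -n\,du\wedge\lambda\wedge d\lambda^{n-1} = -n\,du(Z)\,\mu$, so $Z(u) = -1/n$ on $\Gamma$.

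\emph{Independence of the framing function.} Let $u_0,u_1$ be two framing functions. The condition is linear in $u$ but is an open condition of fixed sign, so I expect the main obstacle to be convexity of the space of framing functions. Concretely, I want to show that $u_t = (1-t)u_0 + tu_1$ is again a framing function once orientations are fixed. Orient $\Sigma$ by $\mu_0$. If $\mu_1$ has the opposite orientation, replace $u_1$ by $-u_1$; this leaves the zero set unchanged, since $(-u_1,du_1)$ gives $-\mu_1$. Then $\mu_{u_t} = (1-t)\mu_0 + t\mu_1$ is a convex combination of two positive volume forms, hence itself a volume form. This is the linearity observation of Remark \ref{rmk:convexity_of_framing} applied to the restricted class of framings $(u,-du)$, which is still a convex cone.

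So $\Gamma_t = \{u_t = 0\}$ is a smooth family of closed hypersurfaces. Each is transverse to $\Sigma_\eta$ by the argument above, and hence each is a contact submanifold of $\Sigma$.

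\emph{Isotopy.} The family $\Gamma_t$ is a smooth isotopy of closed submanifolds, since $du_t\neq 0$ along $\Gamma_t$ and the implicit function theorem applies. The induced structures $\eta\cap T\Gamma_t$ form a smooth family of contact structures. Gray stability then shows that the isotopy can be taken through contact embeddings, so $\Gamma$ is independent of $u$ up to contact isotopy.

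A more geometric alternative is to isotope $\Gamma_0$ to $\Gamma_1$ by flowing along $Z$. Both are transversals meeting the same flow lines, because $Z(u_t) = -1/n$ on $\Gamma_t$ for every $t$. Lemma \ref{lem:contact_transversals} would then give a contactomorphism between them. The only delicate point in this route is checking that every flow line crossing $\Gamma_0$ also crosses $\Gamma_1$. The Gray stability route above avoids that check, so I would use it.
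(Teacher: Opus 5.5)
Your proof is correct and follows the same route as the paper. The volume-form condition gives $du\neq 0$ and $du(Z)\neq 0$ along $\Gamma$; transversality then yields the contact structure via Lemma \ref{lem:contact_transversals}; and independence comes from the convexity of the space of framings. Two refinements go beyond the paper's one-line appeal to Remark \ref{rmk:convexity_of_framing}. First, you fix the orientation before interpolating, which is needed because $u$ and $-u$ are both framing functions while their average is not. Second, you use Gray stability to upgrade the isotopy to a contact isotopy.
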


\begin{proof} Let $u$ be any framing function. By Definition \ref{def:framing}, the differential form (\ref{eq:framing_volume_form}) is a volume form $\mu$. It follows that
\[
du \neq 0 \qquad\text{along the subset}\qquad \Gamma = u^{-1}(0)
\]
Therefore $\Gamma$ is a transversely cut out hypersurface. In order for $du \wedge \lambda \wedge d\lambda^{n-1}$ to be a volume form, it must be the case that $du(Z) \neq 0$ along $\Gamma$ where $Z$ is the associated characteristic vector field to the volume form $\mu$. It follows that $Z$ is transverse to $\Gamma$. The independence of $u$ up to isotopy follows from the contractibility of the space of framing functions (see Remark \ref{rmk:convexity_of_framing}). \end{proof}

\begin{lemma}[Halves Are Liouville] \label{lem:liouville_halves} The positive and negative halves $\Sigma_+$ and $\Sigma_-$ of a convex contact Hamiltonian manifold $(\Sigma,\eta)$ are conformally Liouville domains.
\end{lemma}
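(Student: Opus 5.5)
The natural candidate is the rescaled form $\lambda/u$ on the interior of $\Sigma_\pm$, where $u$ is a framing function, so that $(u,-du)$ is a framing. The plan is to show that $d(\lambda/u)$ is symplectic on $\{u \neq 0\}$. Then I would check that the Liouville vector field points outward near $\Gamma$, after modifying $u$ near the boundary so that $\lambda/u$ becomes a genuine Liouville form on a slightly shrunken copy of $\Sigma_\pm$ that is isotopic to $\Sigma_\pm$.

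\textbf{Step 1: the volume computation.} Write $\beta = \lambda/u$ on $\{u \neq 0\}$. Then $d\beta = u^{-2}(u\,d\lambda - du\wedge\lambda)$. Expanding the $n$-th power, the $du\wedge\lambda$ term can appear at most once, since $\lambda\wedge\lambda = 0$. This gives
\[
d\beta^n = u^{-n-1}\big(u\,d\lambda^n - n\,du\wedge\lambda\wedge d\lambda^{n-1}\big) = u^{-n-1}\mu ,
\]
where $\mu = u\,d\lambda^n + n(-du)\wedge\lambda\wedge d\lambda^{n-1}$ is the volume form guaranteed by the definition of a framing (Definition \ref{def:framing}). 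Hence $d\beta$ is symplectic on $\{u\neq 0\}$, and $\beta$ is a positive (resp. negative) Liouville form on the interior of $\Sigma_+$ (resp. $\Sigma_-$) once $\Sigma$ is oriented by $\mu$, the sign being that of $u^{n+1}$. Since $\beta$ is a nowhere-vanishing multiple of $\lambda$, it is a contact Hamiltonian form for $\eta$, so the interiors are positive and negative Liouville subsets in the sense of Subsection \ref{subsec:Lioiville_subsets}.

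\textbf{Step 2: behaviour near $\Gamma$.} Here $\beta$ blows up, so I pass to the domain $\Sigma_+^\delta = \{u \ge \delta\}$ for small $\delta > 0$. I would show that its boundary $\{u = \delta\}$ is transverse to the characteristic foliation, and that the Liouville field $V$ of $\beta$, which is a characteristic vector field by Lemma \ref{lem:ker_lambda_description}, points outward, i.e.\ $du(V) < 0$ there. Using Lemma \ref{lem:contact_transversals}, a collar $[-\epsilon,\epsilon]_s\times\Gamma$ has $\eta = \on{span}(\partial_s)\oplus\xi$, so I can write $\lambda = g\,\alpha$ with $\alpha$ a contact form on $\Gamma$. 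Then $\beta = (g/u)\alpha$, and the Liouville field is a positive multiple of $\pm\partial_s$ determined by the derivative of $\log(g/u)$ in $s$. As $u \to 0^+$, $\log(1/u)$ dominates this derivative, so $V$ flows toward decreasing $u$, i.e.\ outward from $\{u\ge\delta\}$, for $\delta$ small. Transversality of $Z$ to $\Gamma$, i.e.\ $du(Z)\neq 0$, was shown in the dividing-set lemma, and it persists to level sets $\{u=\delta\}$ for small $\delta$.

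\textbf{Step 3: identification and the main obstacle.} Flowing along the characteristic foliation in the collar identifies $\Sigma_+^\delta$ with $\Sigma_+$ by an isotopy through subdomains whose boundaries are transverse to $\Sigma_\eta$. Alternatively, one can replace $u$ by a function $\tilde u$ that agrees with $u$ away from $\Gamma$ and equals an $s$-dependent function near $\Gamma$. Either way $\Sigma_+$ itself becomes a conformally Liouville domain, meaning that $\eta$ on $\Sigma_+$ is the kernel of a Liouville form up to the collar completion, and the same argument applies to $\Sigma_-$. I expect the main obstacle to be Step 2, namely making the boundary convexity precise: one must control the sign of $du(V)$ uniformly near $\Gamma$ and be careful about what ``conformally Liouville domain'' means at the boundary, where $\beta$ is singular. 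Step 1 is a routine algebraic identity.
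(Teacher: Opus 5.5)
Your proposal is correct and follows essentially the paper's route. The paper also uses $\lambda/u$, getting its Liouville property by rescaling $u\,ds+\lambda$ to $ds+\lambda/u$, which is equivalent to your identity $d(\lambda/u)^n=u^{-n-1}\mu$. It pushes $\Sigma_+$ off $\Gamma$ by a backward characteristic flow, which preserves $\eta$; so does the flow of any $\rho Z$, since $\mathcal{L}_{\rho Z}\lambda=\rho f\lambda$, and you should state this in your Step 3. The paper then simply asserts that the characteristic Liouville field points outward, and your Step 2 supplies that sign check; in fact $V=nuZ_\mu$ with $du(Z_\mu)=-1/n$ along $\Gamma$.

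One small slip: on $\Sigma_-$ the sign of $u^{-n-1}$ is $(-1)^{n+1}$. Use $\lambda/|u|$ instead (it has the same Liouville field, still outward-pointing) if you want a negative Liouville form for every $n$.
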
 

\begin{proof} We prove the result for $\Sigma_+$, as the proof is analogous for $\Sigma_-$. Note that $\Sigma_+$ is isomorphic as a contact Hamiltonian manifold to $S = \Phi_T(\Sigma_+)$ where $\Phi_T$ is the time $T$ characteristic flow. So it suffices to prove that $S$ is conformally Liouville. For this, note that if we choos $T < 0$, then $S \subset \Sigma_+$ is disjoint from the dividing set $\Gamma = u^{-1}(0)$ and we can rescale $u ds + \lambda$ by a positive smooth function so that
\[
u = 1 \text{ on $S$}\qquad\text{and}\qquad u ds + \lambda = ds + \lambda \text{ on $S$}
\]
On the other hand, $ds + \lambda$ is contact if and only if $\lambda$ is a Liouville form. Moreover, the characteristic vector field on $\Sigma$, which agrees with the Liouville vector field of $S = \Sigma_+$, points outward along $\Gamma = \partial\Sigma_+$. Thus $S = \Sigma_+$ is (conformally) a Liouville domain.\end{proof} 

\subsection{Morse-Smale Criterion} \label{subsec:MS_criterion} A basic problem is to determine if a given contact Hamiltonian manifold is convex. As it happens, the most useful criteria that imply convexity are dynamical in nature, and this is a key interaction point between convex hypersurfaces and dynamical systems. Here we discuss the original criterion observed by Giroux \cite{g1991} and generalized by Breen \cite{b2021}. 

\vspace{3pt}

Recall that a \emph{wandering point} $x$ with respect to a flow $\Phi$ is a point such there is a neighborhood $U$ of $x$ whose image under the flow is disjoint from the original open set, for large times. That is, there is a time $S$ such that
\[\Phi_T(U) \cap U = \emptyset \qquad\text{for all}\qquad T \ge S\]
The \emph{non-wandering set} $\on{NW}(\Phi)$ of a flow is the set of points that are not wandering. Note that every periodic orbit lies in the non-wandering set. Moreover, the non-wandering points and set depend only on the underlying singular line field $L$.

\begin{definition}[Morse-Smale] A singular line field $L$ on a smooth manifold $\Sigma$ is \emph{Morse-Smale} if
\begin{itemize}
    \item[(a)] the non-wandering set consists of finitely many non-degenerate closed hyperbolic orbits.
    \item[(b)] the stable and unstable manifolds of any pair of periodic orbits $\gamma$ and $\eta$ are transverse.
\end{itemize}
Note that periodic orbits include singularities in this definition. \end{definition}

\begin{example}[Gradient-Like] A vector field (or singilar line field) that is gradient-like with respect to a Morse function and that satisfies the Smale transversality condition is Morse-Smale. These are precisley the Morse-Smale vector fields that have no closed orbits (only singularities).
\end{example}

\begin{thm}[Morse-Smale Criterion] \label{thm:MS_criterion} A contact Hamiltonian manifold $(\Sigma,\eta)$ with Morse-Smale characteristic foliation is convex. \end{thm}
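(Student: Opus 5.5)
Our plan is to construct a framing function $u$ directly. We split $\Sigma$ into a positive and a negative Liouville region separated by a hypersurface transverse to the characteristic foliation, then glue. Since the condition that $(u,-du)$ be a framing is
\[
u\, d\lambda^n - n\, du\wedge\lambda\wedge d\lambda^{n-1} \neq 0,
\]
it suffices to find $\lambda$ and $u$ such that: near $\{u=0\}$ the characteristic vector field $Z$ satisfies $du(Z)\neq 0$ with the right sign; on $\{u>0\}$ the form $\lambda$ is positive Liouville with $du(Z)\le 0$ (in the sense compatible with the volume form); and symmetrically on $\{u<0\}$. The sign conventions are fixed by rewriting the framing condition via \eqref{eq:divergence_equation} as $(u\,\on{div}(Z,\mu) - n\, du(Z))\,\mu \neq 0$, using $d\lambda^n = \on{div}(Z,\mu)\mu$ and $du\wedge\iota_Z\mu = du(Z)\mu$. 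The task is therefore to find a characteristic $Z$ and a function $u$ such that $u\,\on{div}(Z) - n\,Z(u) > 0$ everywhere. By Lemma \ref{lem:spanning_vector_fields}, any $Z$ spanning $\Sigma_\eta$ is characteristic for some $\lambda$, so we may freely rescale $Z$.

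\textbf{Step 1: local models at the non-wandering set.} By Lemmas \ref{lem:breen_divergence} and \ref{lem:hh_divergence}, each hyperbolic orbit or singularity $\gamma$ is either positive or negative Liouville. Near each positive $\gamma$ we choose $Z$ with $\on{div} Z>0$ and set $u=1$. Near each negative one we choose $\on{div} Z<0$ and set $u=-1$. Then $u\,\on{div} Z>0$ and $Z(u)=0$ there.

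\textbf{Step 2: a Lyapunov-type filtration.} Since the foliation is Morse-Smale, standard dynamics (Smale/Conley) gives a smooth Lyapunov function $g$ that strictly decreases along $Z$ off the non-wandering set and is constant near each basic orbit. We want a regular level $\Gamma=g^{-1}(c)$ separating all positive orbits from all negative ones: every positive orbit should lie in $\{g>c\}$ and every negative orbit in $\{g<c\}$, or the reverse, depending on orientation conventions. The key input is the isotropy of stable and unstable manifolds from Lemma \ref{lem:isotropic_stable}, combined with the index bounds and transversality. A flow line from a negative orbit $\gamma$ to a positive orbit $\gamma'$ would lie in $W^u(\gamma)\cap W^s(\gamma')$. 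Index counting gives $\dim W^u(\gamma)+\dim W^s(\gamma')\le 2n$ (roughly $n+n$, adjusted for orbits), so by transversality the intersection has dimension at most $0$ (resp. $1$, the orbit direction), which is too small to contain a flow line. Hence no connections go from negative to positive. This lets us reorder the filtration so that positive pieces come first, and a level set $\Gamma$ of the adjusted Lyapunov function separates the two classes. I expect this step to be the main obstacle: the dimension count must be done carefully, including the borderline index-$n$ singularities and the difference between orbits and singularities, and the rearrangement of Lyapunov levels must be justified.

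\textbf{Step 3: gluing.} $\Gamma$ is transverse to $Z$, so by Example \ref{ex:transversal_Liouville} a collar of $\Gamma$ is a symplectization piece $[-\epsilon,\epsilon]_s\times\Gamma$. On this collar we take $\lambda=e^{s}\alpha$ or a suitable rescaling, and $u$ a monotone function of $s$ that crosses zero transversally with $-n\,Z(u)>0$. Outside the collar, $u=\pm1$ on the positive and negative sides respectively. On the wandering region of each side, which is both positive and negative Liouville by Example \ref{ex:transversal_Liouville}, we must extend $Z$ with divergence of the correct sign everywhere on that side. For this we use the Lyapunov function: multiply $Z$ by $e^{Kh}$ for a function $h$ monotone along flow lines. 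Since $\on{div}(fZ)=f\,\on{div}Z+Z(f)$, choosing $h$ with $Z(h)$ of the right sign and $K$ large makes the divergence positive on the positive side and negative on the negative side, while the compact non-wandering neighborhoods keep their signs from Step 1. This yields $u\,\on{div}Z - nZ(u)>0$ globally, so $u$ is a framing function and $(\Sigma,\eta)$ is convex.
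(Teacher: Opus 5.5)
Your argument is correct, and it takes a genuinely different route from the paper's sketch.

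\textbf{How the two routes differ.} The paper, following Giroux, Honda--Huang and Breen, sorts the basic sets with Lemmas \ref{lem:breen_divergence} and \ref{lem:hh_divergence}. It then rebuilds $\Sigma_\pm$ as handlebodies (ordinary handles at singularities, round handles at closed orbits) and extends a Liouville form inductively over each handle.

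You do three things instead:
\begin{itemize}
\item You turn convexity into the pointwise inequality $u\,\on{div}(Z,\mu)-n\,Z(u)>0$ for a positive rescaling $Z$ of one fixed characteristic field. This is legitimate by Lemma \ref{lem:spanning_vector_fields} and \eqref{eq:divergence_equation}, since the left side times $\mu$ is exactly $u\,d\lambda^n-n\,du\wedge\lambda\wedge d\lambda^{n-1}$.
\item You obtain $\Gamma$ as a regular level of a Conley attractor--repeller Lyapunov function.
\item You fix the sign of the divergence on the whole wandering set at once via $\on{div}(e^{Kh}Z)=e^{Kh}(\on{div}Z+K\,Z(h))$.
\end{itemize}
The last step works because $Z(h)$ has the right sign weakly everywhere, so the local models near the basic sets survive. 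It has the right sign strictly on the compact wandering remainder, so it absorbs the cutoff errors there.

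Your route treats orbits and singularities uniformly and avoids induction and round handles. Your Step 2 is precisely the ``no retrograde connection'' condition of the chain-convexity criterion in Section \ref{sec:questions_and_conjectures}. The paper's route is the one carried out in detail in the literature and keeps explicit control of $\lambda_\pm$ handle by handle.

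\textbf{Step 2: the dimension count needs correcting.} There is no adjustment for orbits.
\begin{itemize}
\item Lemma \ref{lem:isotropic_stable} makes all of $W^u(\gamma)$ (resp.\ $W^s(\gamma')$) isotropic, including the orbit direction, since $Z\in\eta$. So each has dimension at most $n$.
\item Transversality then gives $\dim\bigl(T_pW^u(\gamma)\cap T_pW^s(\gamma')\bigr)\le 0$. This contradicts $0\neq Z(p)$ lying in both.
\item Your parenthetical bound of $1$ would not exclude a flow line, so as written the orbit case does not go through.
\end{itemize}
Index-$n$ singularities need no separate treatment. Ruling out direct negative-to-positive connections suffices, since any chain from a negative to a positive basic set contains one.

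\textbf{Step 3: the collar model.} With $\lambda=e^{s}\alpha$ you get $\on{div}\,\partial_s=n$ on the whole collar, and the inequality then fails where $u<0$. You need $\lambda=e^{\phi(s)}\alpha$ with $\phi'$ of the sign of $u$; for example $\phi=-s^2/2$ and $u=-s$ give $n(1+s^2)>0$.

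It is cleaner to avoid patching altogether:
\begin{itemize}
\item Take $u=\upsilon(g)$ and $h=\beta(g)$ globally.
\item Let $\upsilon$ be nondecreasing, pass through $0$ with $\upsilon'>0$ at $c$, and equal $\pm1$ away from $c$.
\item Let $\beta'(t)$ have the sign of $c-t$.
\end{itemize}
Near $g=c$ the term $-n\,Z(u)$ dominates, and elsewhere a large $K$ does.

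\textbf{A minor slip.} A Lyapunov function strictly decreasing off the non-wandering set is constant on each basic set, not near it. Constancy on the basic set is all your argument needs.
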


\begin{cor}[Gradient-like Smale] \label{cor:GL_criterion} A contact Hamiltonian manifold $(\Sigma,\eta)$ whose characteristic foliation is gradient-like, and that satisfies the Smale transversality condition, is convex.
\end{cor}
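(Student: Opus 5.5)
This is a special case of Theorem \ref{thm:MS_criterion}, so the plan is to check that the hypotheses of the corollary give a Morse-Smale characteristic foliation in the sense of the definition above, and then invoke the theorem.

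\emph{Step 1 (non-wandering set).} Let $Z$ be a characteristic vector field that is gradient-like for a Morse function $f$. Gradient-like means $Z(f) > 0$ away from the critical points of $f$, and the singularities of $Z$ are exactly the critical points of $f$. Take a regular point $x$ with $f(x) = c$. Choose a small neighborhood $U$ of $x$ on which $f < c + \delta$. Then for large time $T$, the image $\Phi_T(U)$ lies in $\{f > c+\delta\}$, because $f$ increases strictly along the flow and every forward orbit accumulates only on critical points. Hence $\Phi_T(U) \cap U = \emptyset$, so $x$ is wandering. It follows that $\on{NW}(\Phi)$ is exactly the finite set of singularities of $Z$.

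\emph{Step 2 (hyperbolicity).} Each singularity must be hyperbolic, so that the stable and unstable manifolds in condition (b) are defined. Under the usual convention, a gradient-like field agrees with the gradient of $f$ for some metric near each critical point, or at least has hyperbolic linearization there. Since $f$ is Morse, its Hessian is non-degenerate, and this gives hyperbolicity. This is the one point where the precise meaning of ``gradient-like'' matters. If only the weak condition $Z(f)>0$ is assumed, I would either add non-degeneracy of the zeros as part of the hypothesis, or reparametrize $Z$ by a positive function so that it becomes a genuine local gradient. Such a reparametrization is harmless, since by Lemma \ref{lem:spanning_vector_fields} any spanning vector field is characteristic.

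\emph{Step 3 (transversality and conclusion).} The Smale transversality condition is exactly condition (b) for pairs of singularities, and there are no closed orbits to check. So $\Sigma_\eta$ is Morse-Smale, and Theorem \ref{thm:MS_criterion} shows that $(\Sigma,\eta)$ is convex. The only real obstacle is the hyperbolicity issue in Step 2; the remaining steps are formal.
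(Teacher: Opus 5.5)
Your proposal is correct and follows the paper's route: the paper gives no separate proof of the corollary, relying on its observation (the Gradient-Like example) that a gradient-like field satisfying Smale transversality is Morse-Smale, together with Theorem~\ref{thm:MS_criterion}. One caution on Step 2: your fallback of rescaling $Z$ by a positive function $h$ cannot supply hyperbolicity, since at a zero $p$ one has $D(hZ)(p) = h(p)\,DZ(p)$, so non-degeneracy of the singularities has to come from the standard convention that $Z$ is a genuine gradient in Morse coordinates near each critical point, which is your first option.
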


\noindent Theorem \ref{thm:MS_criterion} is due to Giroux \cite{g1991} in dimension three and to Breen \cite{b2021} in general. Corollary \ref{cor:GL_criterion} is due to Honda-Huang in higher dimensions \cite{hh2019}. We now sketch the proof of Theorem \ref{thm:MS_criterion}. 

\vspace{3pt}

\begin{proof} (Sketch) The basic problem is to construct the splitting of $\Sigma$ into a positive half $\Sigma_+$ and negative half $\Sigma_-$ with the dividing set $\Gamma$ in the middle. Under any splitting of this kind, the non-wandering set of the characteristic foliation
\[\on{NW}(\Sigma_\eta) \subset \Sigma\]
must itself divide into two halves contained in $\Sigma_+$ and $\Sigma_-$. Indeed, all of the points in $\Gamma$ are wandering since the flowline through such a point must flow into $\Sigma_-$ and out of $\Sigma_+$, never to return to $\Gamma$. Our strategy is to first identify the correct splitting
\begin{equation}
\label{eq:NW_splitting} \NW(\Sigma_\eta) = \NW_+(\Sigma_\eta) \sqcup \NW_-(\Sigma_\eta) \qquad\text{where}\qquad \NW_\pm(\Sigma_\eta) = \Sigma_\pm \cap \NW(\Sigma_\eta) \end{equation}
and then to reconstruct $\Sigma_+$ and $\Sigma_-$ from this splitting of the non-wandering set. To do this, note that the characteristic foliation $\Sigma_\eta$ is Morse-Smale. Thus the set $\NW(\Sigma_\eta)$ is precisely the set of hyperbolic periodic points (and singularities) of the characteristic foliation. Lemmas \ref{lem:breen_divergence} and \ref{lem:hh_divergence} then completely determine which of these closed orbits and singularities are positive Liouville and which are negative Liouville. This determines the splitting (\ref{eq:NW_splitting}) completely.

\vspace{3pt}

This reduces the proof to the problem of reconstructing $\Sigma_+$ and $\Sigma_-$ from the non-wandering components $\NW_+(\Sigma_\eta)$ and $\NW_-(\Sigma_\eta)$. In fact, this is essentially the problem of reconstructing a handlebody decomposition from the critical points of a graditent-like vector field. 

\begin{figure}[h]
    \centering
    \includegraphics[width=.95\linewidth]{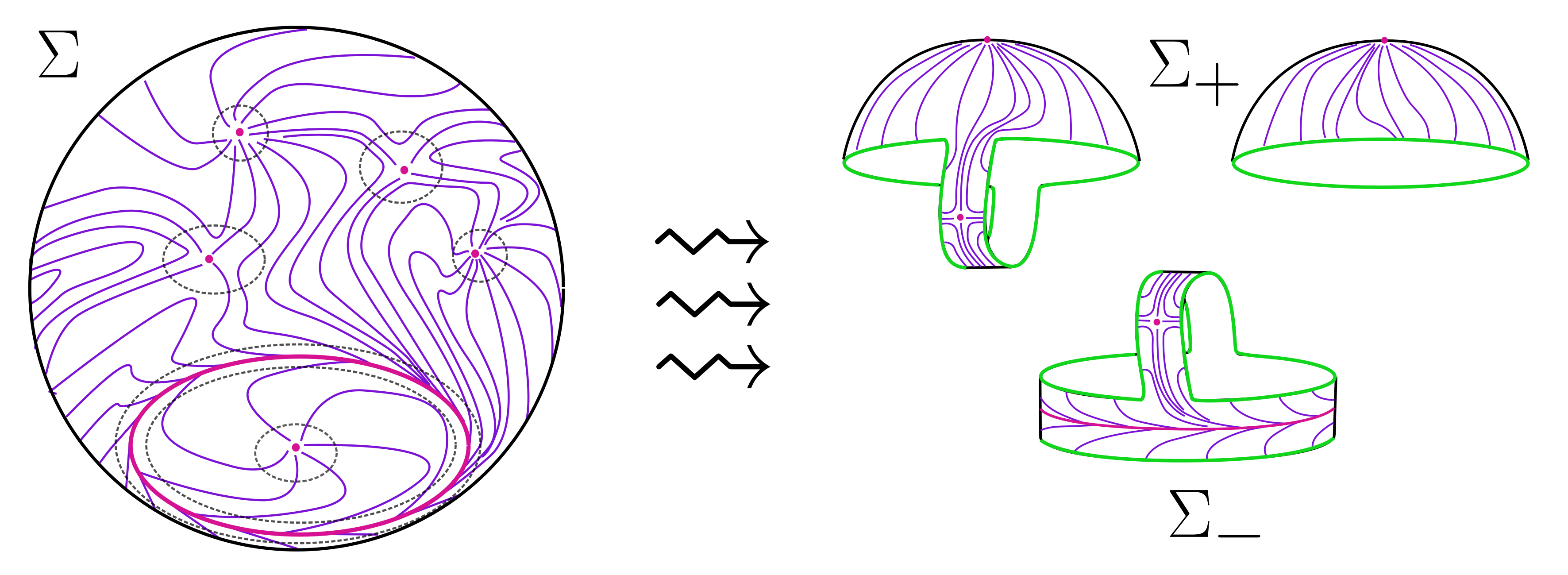}
    \caption{The splitting of a contact Hamiltonian 2-manifold with Morse-Smale characteristic foliation, reconstructed from critical points. }
    \label{fig:contact_Hamiltonian_examples}
\end{figure}

Precisely, consider the simplified setting where $\Sigma_\eta$ is gradient-like and there are no critical points of middle index $n$. In this case, $\NW_+(\Sigma_\eta)$ and $\NW_-(\Sigma_\eta)$ is simply the set of all of the critical points of index below $n$ and above $n$, respectively. Since $\Sigma_\eta$ satisfies the Smale property, there is a corresponding handle decomposition
\[
\Sigma = \Sigma_+ \cup \Sigma_-
\]
into the handlebody $\Sigma_+$ of index $n-1$ and lower handles and the handlebody $\Sigma_-$ of index $n+1$ and higher handles. The key point is that desired contact Hamiltonian forms $\lambda_+$ and $\lambda_-$ restricting to positive Liouville forms on $\Sigma_+$ and $\Sigma_-$ can now be constructed inductively, handle-by-handle. In particular, there is sufficient flexibility in the choice of positive (or negative) Liouville form near each critical point, so that a Liouville form on the union of the $k$-handles can be extended over a neighborhood of the $(k+1)$-critical points. This argument is carried out carefully in \cite[\S 2]{hh2019}. 

\vspace{3pt}

In the general case where the characteristic foliation $\Sigma_\eta$ has closed orbits, a similar reconstruction procedure can be carried out. While singularities still correspond to ordinary handles, closed hyperbolic orbits correspond to certain round handles that resemble an ordinary handle times a circle. These round handles correspond to the tubular neighborhood of the stable manifold of the given orbit. This proof is carried out in detail by Breen \cite{b2021}.\end{proof}

\subsection{Genericity In Dimension Two} \label{subsec:convex_genericity} The most important applications of the Morse-Smale criterion (Theorem \ref{thm:MS_criterion}) are various results that address the abundance of convexity among hypersurfaces in contact manifolds, due to Giroux \cite{g1991} and Honda-Huang \cite{hh2019}. A priori, it could be the case that a given contact manifold has few non-trivial convex hypersurfaces. To the contrary, the results of Giroux and Honda-Huang state that convex hypersurfaces are abundant. 

\vspace{3pt}

Let us start by explaining the smooth genericity result due to Giroux \cite{g1991}. Here is the precise statement of this result.

\begin{thm}[Giroux Genericity] \label{thm:giroux_genericity} Convex surfaces are $C^\infty$-generic in the space of smooth embedded surfaces in any contact 3-manifold $(Y,\xi)$.
\end{thm}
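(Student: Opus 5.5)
The plan is to reduce the statement to the Morse–Smale criterion (Theorem \ref{thm:MS_criterion}). Together with a genericity result for line fields on surfaces, this gives the result. Let $\mathcal{E}$ be the space of smooth embeddings of a closed surface $S$ into $(Y,\xi)$, with the $C^\infty$ topology. Each $\iota \in \mathcal{E}$ carries the contact Hamiltonian structure $\eta_\iota = \iota^*\xi$ (Example \ref{ex:hypersurfaces}) and hence the characteristic foliation $S_{\eta_\iota}$. This is a singular line field on $S$ that depends continuously on $\iota$. By Theorem \ref{thm:MS_criterion}, it suffices to prove that the set of $\iota$ for which $S_{\eta_\iota}$ is Morse--Smale is open and dense. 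Openness is the standard structural stability of Morse--Smale flows on closed surfaces (Peixoto), since the spanning vector field varies continuously with $\iota$. The main work is density.

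Density comes from Peixoto's theorem, that Morse--Smale vector fields are $C^\infty$-dense among vector fields on a closed surface. We need one extra fact: any small perturbation of the characteristic line field can be \emph{realized} by a small perturbation of the embedding. I would check this locally. Near a point of $\iota(S)$, choose coordinates in which $\iota(S)$ is a graph $z = 0$ over a disk in $(x,y)$ and the contact form is $\alpha = dz + a\,dx + b\,dy$, after the usual normalization. Perturb the surface to the graph $z = \epsilon h(x,y)$. The pullback of $\alpha$ changes by $\epsilon\, dh$ plus higher order terms. So the restricted 1-form $\lambda$ can be modified by an arbitrary small exact form $d(\epsilon h)$, with $h$ supported in the chart. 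This is the key flexibility. In dimension two the characteristic vector field $Z$ of $\lambda$ with respect to an area form $\mu$ satisfies $\iota_Z\mu = \lambda$, so changing $\lambda$ by $dh$ changes $Z$ by the $\mu$-Hamiltonian vector field of $h$. Applying this through a finite cover by charts and a partition of unity, perturbations of the embedding can produce every perturbation of $Z$ of the form $X_h$ plus a multiple of $Z$. The multiples of $Z$ do not change the line field.

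The key step, and the main obstacle, is that Peixoto's perturbations must be realized inside this restricted class. Peixoto's argument uses only local moves. The first is making singularities and closed orbits hyperbolic. For singularities the needed perturbation of the linear part is an arbitrary small $2\times 2$ matrix, and trace-free ones come from quadratic $h$. The divergence is already nonzero at a singularity by Definition \ref{def:contact_Hamiltonian_form}, so nondegeneracy is the only issue. For closed orbits, one adjusts the Poincaré map with an $h$ supported in a flow box. The second move breaks saddle connections by a local push transverse to the flow. The third, and the subtle one, eliminates nontrivial recurrence (Cherry flows, irrational-type recurrence) by closing recurrent orbits. I would handle it by noting that Peixoto's closing perturbations are transverse pushes in flow boxes, and these are realizable by Hamiltonian $X_h$ there because in a flow box $X_h$ for $h = h(y)$ is an arbitrary transverse shear. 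With these realizations, Peixoto's argument goes through inside $\mathcal{E}$. This gives density of Morse--Smale characteristic foliations, and by Theorem \ref{thm:MS_criterion}, genericity of convexity.
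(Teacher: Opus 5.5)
Your reduction is the same as the paper's: Theorem~\ref{thm:MS_criterion} plus a Peixoto-type density statement for characteristic foliations, which the paper only asserts. Your openness argument is fine, and so is the singularity step: the trace is nonzero there, so trace-free perturbations suffice. The gap is in the realization mechanism you supply for the density statement.

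First, the expansion is wrong as written. Restricting $\alpha = dz + a\,dx + b\,dy$ to the graph $z=\epsilon h$ gives $\lambda + \epsilon(dh + h\beta) + O(\epsilon^2)$ with $\beta = (\partial_z a\,dx + \partial_z b\,dy)|_{z=0}$. The term $\epsilon h\beta$ is first order, not higher order. It is not harmless either: the contact condition along $S$ reads $d\lambda + \beta\wedge\lambda \neq 0$, so wherever $d\lambda$ vanishes the non-integrability is carried entirely by $\beta$.

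Second, and more seriously, the class $\{Z + X_h\}$ in which you run Peixoto's argument is too small. Every such field satisfies $\mathcal{L}_{Z+X_h}\mu = d(\lambda+dh) = d\lambda$, so the $\mu$-divergence never changes. Your flow-box claim is also false. If $Z=\partial_x$, then $h=h(y)$ gives $X_h$ tangent to the level sets of $h$, i.e.\ parallel to $Z$. A compactly supported $h$ must vary along the flow lines. Where $d\lambda = 0$ the perturbed flow preserves $\mu$, so its transition map across the box preserves the flux measure $\iota_Z\mu$ on the transversals. It also agrees with the old map near their ends, hence is unchanged. So there is no closing push and no change of Poincar\'e multipliers.

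Here is a concrete failure. Take $S = T^2\times\{z_0\}$ in $(T^3,\ker(\cos 2\pi z\,dx - \sin 2\pi z\,dy))$, normalized along a transverse $V$ with $\alpha(V)=1$. Then $\lambda$ is a constant 1-form. Every field in your class is $\mu$-preserving, has no attractors, and is never Morse--Smale, although the theorem certainly holds for this torus.

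The repair also shortens the proof, because there is no restricted class at all. Let $\lambda'$ be $C^\infty$-close to $\lambda$. A framing $(u,\theta)$ for $\lambda$ remains a framing for $\lambda_t = \lambda + t(\lambda'-\lambda)$, so $\alpha_t = u\,ds + s(\theta+du) + \lambda_t$ is a path of contact forms near $0\times S$ in $CS$. Gray stability gives a $C^\infty$-small isotopy $\psi$ with $\psi^*\ker\alpha_1 = \ker\alpha_0$. Hence the embedding $\psi^{-1}|_{0\times S}$ into $(CS,\ker\alpha_0)$ induces exactly $\ker\lambda'$, and Lemma~\ref{lem:std_nbhd} carries this into $Y$.

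In dimension two the contact Hamiltonian condition is $C^1$-open. So $\lambda' = \iota_{Z'}\mu$ is realizable for every $Z'$ that is $C^\infty$-close to $Z$. Peixoto's theorem therefore applies to $Z$ verbatim, and no move-by-move check is needed.

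If you want to keep your local picture, normalize chart by chart. Near each point choose a contact form whose Reeb field is transverse to $S$ and use Reeb flow-box coordinates, so that $a$ and $b$ are $z$-independent. Then $\lambda_{\mathrm{chart}} + \epsilon\,dh$ is exact and $d\lambda_{\mathrm{chart}}\neq 0$. In a flow box with $Z=\partial_x$ and $\mu = \rho\,dx\wedge dy$, a push by compactly supported $h$ displaces orbits by $\pm\epsilon\int h\,\mathrm{div}_\mu(Z)\,\rho^{-1}\,dx$ to first order. This nonvanishing divergence, not an $h(y)$-shear, is what makes the closing and hyperbolicity moves work. You must then give up the global description by $X_h$ for a single $\mu$.
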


\noindent This result is follows more-or-less immediately from Theorem \ref{thm:MS_criterion} along with the analogue (for characteristic foliations) of the following famous result of Peixoto.

\begin{thm}[Morse-Smale Genericity In Surfaces] \label{thm:MS_genericity_surfaces} Morse-Smale flows are $C^\infty$-generic in the space of all smooth flows on any closed surface $\Sigma$.
\end{thm}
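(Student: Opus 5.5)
This is Peixoto's theorem, so the plan follows the classical route: first show that Morse--Smale flows are open, then show they are dense, in the space $\mathfrak{X}(\Sigma)$ of smooth vector fields on the closed surface $\Sigma$ with the $C^\infty$ topology. Since Morse--Smale is a property of the singular line field, it is enough to work with generating vector fields.

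\emph{Openness.} This is structural stability. If $V$ is Morse--Smale, its finitely many hyperbolic singularities and closed orbits persist under small $C^1$ perturbations, by the implicit function theorem applied to $V$ and to the Poincar\'e return maps. Their local stable and unstable manifolds vary continuously by the Stable Manifold Theorem, so transversality of the intersections persists. To see that no new non-wandering points appear, I would build a filtration (Lyapunov) function adapted to the Morse--Smale flow, in the spirit of the handle decomposition used in the proof of Theorem \ref{thm:MS_criterion}. This gives nested sublevel sets, each mapped strictly into itself, whose differences isolate the basic orbits. Such a filtration is $C^1$-robust, so any nearby field has its non-wandering set inside small neighborhoods of the continued orbits. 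Hyperbolicity then forces the non-wandering set in each neighborhood to be just the continued orbit.

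\emph{Density.} Given an arbitrary $V$, I would proceed in four steps.
\begin{itemize}
\item[(1)] Perturb by Kupka--Smale so that all singularities and closed orbits are hyperbolic. The closed orbits of bounded period are then finite in number.
\item[(2)] Break all saddle connections. In dimension two these are the only possible non-transverse stable/unstable intersections, and there are finitely many, since there are finitely many saddles. A small rotation of $V$ supported near a point of each connection breaks it.
\item[(3)] Rule out infinitely many closed orbits. Because $\Sigma$ is compact, orbits accumulating on a limit set would produce a non-hyperbolic or non-trivially recurrent limit set. After steps (1) and (2), that limit set must be a non-trivial recurrent orbit.
\item[(4)] Eliminate non-trivial recurrence, which is the hard step.
\end{itemize}
For step (4), the Poincar\'e--Bendixson theorem shows that on the sphere every limit set is a singularity, a closed orbit, or a cycle of saddle connections. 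After the earlier steps, such a flow is already Morse--Smale. On higher genus surfaces, a non-trivially recurrent orbit (a Cherry flow, an irrational flow on a torus, and so on) can occur.

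\textbf{Main obstacle.} The key difficulty is Peixoto's closing argument. A recurrent orbit returns arbitrarily close to itself along a transverse arc. Perturbing the field near the return point, which is the $C^1$ closing lemma in dimension two, produces a closed orbit or a saddle connection, and this strictly reduces the number of recurrent components. Maier's theorem bounds the number of non-trivial recurrent quasi-minimal sets by the genus, so finitely many such perturbations suffice. A final Kupka--Smale perturbation then restores hyperbolicity and transversality, which yields a Morse--Smale field arbitrarily $C^\infty$-close to $V$.

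Smoothing the perturbations is automatic because each one has compact support and can be taken $C^\infty$-small. Combined with openness, this gives genericity, in fact an open dense set.
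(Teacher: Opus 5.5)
Your outline follows the classical Peixoto route. In one respect it is more accurate than the paper's sketch. The paper applies the Poincar\'e--Bendixson classification of the non-wandering set to an arbitrary closed surface, then needs only Kupka--Smale and compactness. That silently excludes non-trivial recurrence, so it is only complete where Poincar\'e--Bendixson holds, e.g.\ on the sphere. You correctly identify recurrence as the crux.

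\emph{The closing step fails in $C^\infty$.} Your step (4) does not deliver the $C^\infty$ statement. Pugh's $C^1$ closing lemma gives perturbations that are small only in $C^1$, with no control on higher derivatives. Your claim that smoothing is automatic because each perturbation has compact support is false: compact support says nothing about the $C^2$ norm. Upgrading closing to $C^r$, $r\ge 2$, is a notorious open problem in general. On an orientable surface, Peixoto's theorem works in $C^\infty$ through a different, surface-specific mechanism:
\begin{enumerate}
\item Build a closed transversal $C$ through the recurrent orbit: take an orbit segment between two returns to a transverse arc and close it up along the arc.
\item Twist $X$ in a collar of $C$ by a parameter $t$. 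This is a fixed smooth one-parameter family, so small $t$ means a $C^\infty$-small perturbation.
\item Orientability makes the return map to $C$ order preserving and monotone in $t$. A rotation-number argument then gives $t$ arbitrarily close to $0$ with a closed orbit crossing $C$, or a saddle connection, which you then break.
\end{enumerate}

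\emph{Termination also needs repair.} The number of quasi-minimal sets is not monotone under perturbation. So ``each closing strictly reduces the number of recurrent components'' plus Maier's bound does not show the process stops: a later perturbation, including your final Kupka--Smale one, may create new recurrence. The quantity that actually decreases is topological. The closed orbit produced above crosses $C$ always in the same direction, so its algebraic intersection with $C$ is nonzero and it is non-separating. Make it hyperbolic, so its continuation persists under all later small perturbations. Cutting along such orbits lowers the genus of the invariant region where recurrence can live. After at most $g$ steps every complementary region is planar, and Poincar\'e--Bendixson applies there to every nearby field. Only then does a final Kupka--Smale perturbation, combined with your steps (2) and (3), give a Morse--Smale field.

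\emph{Orientability.} The twist argument uses orientability essentially: otherwise closed transversals can be one-sided and return maps can reverse orientation. On non-orientable surfaces, $C^r$-density of Morse--Smale fields for $r\ge 2$ is, as far as I know, established only in special low-genus cases (projective plane, Klein bottle, torus with a cross-cap). So neither your argument nor the paper's justifies ``any closed surface'' in the $C^\infty$ topology. There the closing-lemma approach yields only $C^1$-density, which is Pugh's theorem.
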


Peixoto's result was a landmark result in low-dimensional dynamics, and it is especially remarkable due to its utter failure in higher dimensions. The proof is fairly straightforward using well-known facts in smooth dynamics, so we sketch it here.

\begin{proof} (Sketch) The Poincar\'{e}-Bendixson Theorem states that the non-wandering set of a surface flow is a union of fixed points (singularities), closed orbits and graphs consisting of saddle point singularities connected by heteroclinic trajectories. 

\begin{figure}[h]
    \centering
    \includegraphics[width=.8\linewidth]{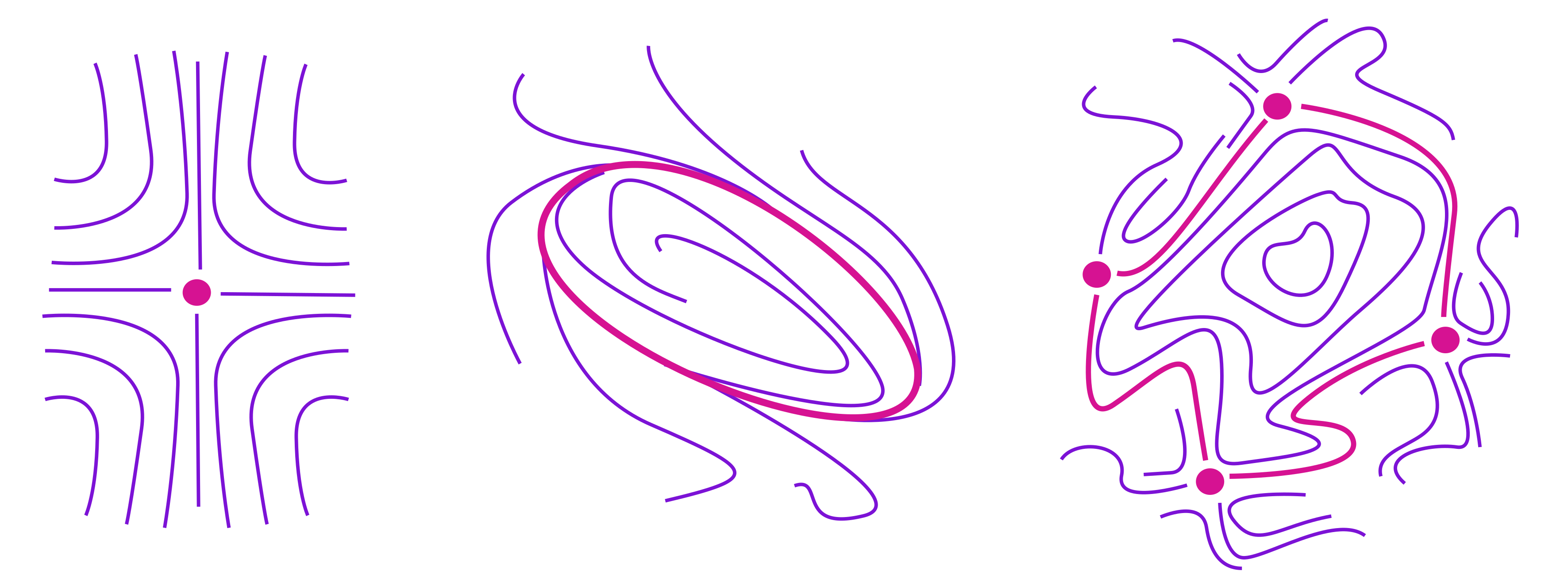}
    \caption{The three types of non-wandering components. }
    \label{fig:non_wandering_surface}
\end{figure}
\noindent We can apply the Kupka-Smale theorem \cite{kupka1963contributiona,smale1967differentiable} to $C^\infty$-approximate the flow by a Kupka-Smale flow where all of the singularities and periodic orbits are non-degenerate, and where the unstable and stable manifolds of the singularities intersect transversely. This prevents any heteroclinic connections between saddle points, leaving only singularities and closed orbits. The finiteness of the critical points and orbits follows from the fact that the non-wandering set is compact, and each singularity and closed orbit is isolated.
\end{proof}

\noindent The analogue of Theorem \ref{thm:MS_genericity_surfaces} in the setting of characteristic surfaces states that surfaces in contact 3-manifolds with Morse-Smale characteristic foliation are $C^\infty$-generic. Theorem\ref{thm:giroux_genericity} then follows from Theorem \ref{thm:MS_criterion}. 

\subsection{Density In Higher Dimensions} \label{subsec:convex_density_higher_d} In higher dimensions, a density (or genericity) result for convex hypersurfaces in the spirit of Theorem \ref{thm:giroux_genericity} remained completely open for several decades. It was widely suspected that the genericity result failed in dimensions higher than two, although proposed counter-examples of Mori \cite{mori2009reeb,mori2011} turned out to be incorrect due to the work of Breen \cite{b2021}.. This situation was finally addressed in the 2020s by groundbreaking work of Honda-Huang \cite{hh2019} who proved the following result.

\begin{thm}[Honda-Huang Density] \label{thm:honda_huang} Convex hypersurfaces are $C^0$-dense in the space of smooth embedded surfaces in any contact manifold $(Y,\xi)$ of any dimension.
\end{thm}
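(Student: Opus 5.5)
The plan is to reduce to the Gradient-like Smale criterion (Corollary \ref{cor:GL_criterion}): after a $C^0$-small perturbation of the hypersurface $\Sigma$, we want its characteristic foliation to be gradient-like for a Morse function and to satisfy Smale transversality. By Lemma \ref{lem:std_nbhd} we may work inside the contactization $C\Sigma = [-\epsilon,\epsilon]_s \times \Sigma$ with contact form $\alpha = u\,ds + s(\theta+du) + \lambda$. Perturbations of $\Sigma$ are then graphs $s = g(x)$ with $|g| < \epsilon$, and $C^0$-smallness only requires $g$ to be uniformly small, with no bound on its derivatives. This freedom is essential. A $C^\infty$-small perturbation cannot kill recurrent dynamics in dimension $\ge 3$, since Peixoto's Theorem \ref{thm:MS_genericity_surfaces} fails there. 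A $C^0$-small perturbation can have large slopes, and those slopes can change the characteristic foliation drastically.

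First, I would fix a Morse function $f$ and a fine handle decomposition of $\Sigma$ subordinate to a very fine cover, by balls much smaller than the allowed $C^0$ tolerance. Working inductively over the skeleta, the goal is to modify $\Sigma$ near each handle so that the new characteristic foliation is transverse to the boundaries of small regions and flows through them in a controlled, gradient-like way.

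The key local model is Honda--Huang's \emph{plug} (or ``wall'') construction. Near a codimension-one piece of a handle boundary, I would insert a thin fold, a graph of $g$ with steep slope but small height. Its effect is that the characteristic foliation enters one side and exits the other, with no trajectory passing back. This blocks recurrence across the wall and forces trajectories crossing a cell to do so monotonically.

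Concretely, I would first handle the neighborhood of the $(n-1)$-skeleton, or more generally the low-index handles. There the foliation can be made Morse--Smale directly by a Liouville-type normal form: by Corollary \ref{cor:conformally_Liouville_transversal}, locally $\lambda = e^s\alpha$. Then I would extend over the higher-index handles by the reversed construction. The top-dimensional cells, the $2n$-balls, are the hardest case. Inside each small ball the original foliation may have arbitrary wild dynamics. I would first make the foliation transverse to the boundary sphere, with the inflow and outflow regions separated by a contact submanifold in the sense of Lemma \ref{lem:contact_transversals}. Then I would use a sequence of steep folds to push every trajectory from the inflow to the outflow region in bounded time. After this step, no non-wandering points remain except the standard singularities that the folds introduce.

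The main obstacle is precisely this ``filling'' of a ball whose boundary has a given transverse characteristic foliation. The constraint comes from the contact condition. By Lemma \ref{lem:isotropic_dimension} and Lemmas \ref{lem:breen_divergence} and \ref{lem:hh_divergence}, any singularities that appear must have compatible Liouville signs, so we cannot arbitrarily prescribe the dynamics. I would attempt this step with a $C^0$-small ``wrinkling'' of the graph, staircase-like steep walls in the $s$-direction. The aim is to make the characteristic vector field, written in the coordinates of Lemma \ref{lem:contact_transversals}, monotone along a chosen coordinate direction. That would make the foliation gradient-like with respect to that coordinate plus small Morse corrections.

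Finally, I would apply the Kupka--Smale theorem (a $C^\infty$-small, hence $C^0$-small, perturbation) to achieve Smale transversality of stable and unstable manifolds, and conclude by Corollary \ref{cor:GL_criterion}.
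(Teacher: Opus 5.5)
\textbf{Verdict: there is a genuine gap.} Your top-level reduction matches the paper's. Theorem \ref{thm:honda_huang} follows from Corollary \ref{cor:GL_criterion} (via Theorem \ref{thm:MS_criterion}) together with Theorem \ref{thm:gradient_like_density}. Working in $C\Sigma$ via Lemma \ref{lem:std_nbhd}, with graphs that are $C^0$-small but steep, is exactly the right source of flexibility. The problem is that essentially all of the content lies in Theorem \ref{thm:gradient_like_density}, and your proposal does not supply it. The step you yourself call the main obstacle is only something you ``would attempt'' by wrinkling, with no mechanism ensuring the result is gradient-like.

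The setup around that step is also misdiagnosed, in three ways.
\begin{itemize}
\item Inside a small ball away from singularities, the characteristic foliation is just a flow box, so there are no wild dynamics inside a ball. The recurrence is global: it comes from trajectories passing from cell to cell.
\item An oriented line field cannot be transverse to all of a connected boundary sphere while having both an inflow and an outflow region. The interface between them is a tangency locus. Trajectories running near it leak out of your cells with uncontrolled monodromy and can reassemble recurrence.
\item Nothing in the handle-by-handle scheme controls this leakage. The normal form $\lambda = e^s\alpha$ of Corollary \ref{cor:conformally_Liouville_transversal} is only a flow box and says nothing about dynamics near a skeleton. In addition, every modification must be realized by moving the hypersurface, not by prescribing a line field.
\end{itemize}

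The paper, following Eliashberg--Pancholi, proceeds differently.
\begin{enumerate}
\item Choose a \emph{blocking collection} $\Gamma$ of small transverse contact disks that every trajectory hits within bounded length.
\item Thicken $\Gamma$ to $[0,1]\times\Gamma$ using Lemma \ref{lem:contact_transversals}.
\item Replace each component by a contact Hamiltonian plug whose characteristic foliation is a gradient-like plug in the sense of Definition \ref{def:plug}. The plug is standard near $\partial P$ and is reached by a $C^0$-small ambient isotopy in $CP$. It traps every trajectory entering at distance $>\epsilon$ from $\partial_{\on{t}}P$, and has $\epsilon$-small monodromy for the rest.
\item For small $\epsilon$, every trajectory of the plugged foliation starts and ends at singularities in the plugs.
\item A final $C^\infty$-small perturbation of the hypersurface gives Smale transversality. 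This is not the abstract Kupka--Smale theorem, which perturbs arbitrary vector fields.
\end{enumerate}

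The hard part is Theorem \ref{thm:CH_plug_existence}. It needs the explicit $2$-dimensional plug, quasi-plugs on $[0,1]\times Y$, and height reduction by layering rescaled copies. Crucially, it also needs an induction on dimension: Theorem \ref{thm:gradient_like_density} one dimension down is used to replace $\mathbb{D}^{2n-1}$ by a domain $Y$ whose boundary has short Morse--Smale characteristic foliation. That induction is what tames the leakage along the tangent boundary, which your proposal never addresses. Without some substitute for this plug construction, what you have is an outline of the reduction rather than a proof.
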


Like Theorem \ref{thm:giroux_genericity}, the proof is an application of the Morse-Smale criterion (Theorem \ref{thm:MS_criterion}) and the following density result for Morse-Smale (and in fact, gradient-like) characteristic foliations.

\begin{thm}[Gradient-like Density] \label{thm:gradient_like_density} Fix a contact Hamiltonian structure $\eta$ on a closed manifold $\Sigma$. Then there is an arbitrarily $C^0$-small $C^\infty$-isotopy $\eta_s$ ambient in $C\Sigma$ such that
\[
\eta_0  = \eta \qquad\text{and}\qquad \eta_1 \text{ has gradient-like characteristic foliation}
\]
Moreover, the trajectories of the characteristic foliation can be made arbitrarily short with respect to any chosen Riemannian metric. \end{thm}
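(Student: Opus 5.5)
The plan is to follow the Honda--Huang strategy, recast in the intrinsic language of Section \ref{sec:contact_Hamiltonian_manifolds}. The guiding principle is that the characteristic foliation is determined locally by the singular hyperplane field. Moreover, $C^0$-small but $C^\infty$-large ambient isotopies in the contactization $C\Sigma$ can produce arbitrary ``wrinkling'' of $\eta$ along small regions. The goal is therefore to chop $\Sigma$ into small pieces and install, on each piece, a standard model whose characteristic foliation is gradient-like with short trajectories.

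The first step is to fix a fine triangulation, or handle decomposition, of $\Sigma$ subordinate to a cover by small balls. On each ball, Corollary \ref{cor:conformally_Liouville_transversal} and Lemma \ref{lem:contact_transversals} give a model in which $\eta$ is either Liouville or the symplectization $\on{span}(\partial_s)\oplus\xi$ of a contact transversal. Arbitrarily short flow boxes already have trivial (wandering) dynamics there. The only danger is recurrence: long trajectories that wander back through many boxes.

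The second step is to insert ``barricades''. Along the codimension-one skeleton (the walls between cells) I perturb $\Sigma$ inside $C\Sigma$ by a $C^0$-small graphical isotopy $s=\epsilon h$ supported near the walls. The aim is that near each wall the new characteristic foliation has a Morse-type piece that traps trajectories: flow lines entering a thin neighborhood of a wall either converge to a negative-index singularity there or exit on a prescribed side. Locally, the wall looks like $[-\delta,\delta]\times P$ with $P$ a piece of contact transversal, so the problem reduces to the model contactization $\R_t\times\R_s\times P$. In that model I would first try an explicit family of graphs, as in Honda--Huang's construction of ``blocking'' Legendrian/regular Lagrangian pieces, producing a local foliation that is gradient-like for a local Morse function. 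Proceeding inductively on the skeleton dimension, and working with isotropic skeleta of each cell, forces every trajectory to remain inside a single small cell before hitting a singularity. Lemma \ref{lem:isotropic_stable} and Lemma \ref{lem:isotropic_dimension} control the indices of the introduced singularities.

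The third step assembles the pieces. Local Morse functions on cells glue to a global Lyapunov function, since trajectories cannot cross barricades in the wrong direction; this gives a gradient-like foliation. Trajectory length is bounded by the cell diameter, which can be taken arbitrarily small. The isotopy is $C^0$-small because each graphical perturbation has amplitude $\epsilon$, and it is ambient in $C\Sigma$ by construction.

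I expect the main obstacle to be the barricade construction in higher dimensions. One needs a local model perturbation whose characteristic foliation is genuinely gradient-like and blocks all crossing trajectories, including those nearly tangent to the wall, uniformly over a neighborhood of the wall. Honda--Huang handle this with the ``plug'' built from a Legendrian/regular Lagrangian stabilization. The delicate point is making this work near lower-dimensional strata (corners of cells) where walls meet, which I would address by induction on the skeleton.
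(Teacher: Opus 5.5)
The proposal has a genuine gap. Your overall idea of localizing and inserting blocking pieces in small regions is the right one, but the gap sits at the step you flag as the main obstacle, and the plan you sketch would not close it.

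What you need is a contact Hamiltonian plug in the sense of Definition \ref{def:plug} and Theorem \ref{thm:CH_plug_existence}. This is a $C^0$-small ambient modification of the standard box $[0,1]_s\times\mathbb{D}^{2n-1}$, $\eta_{\mathrm{std}}=\mathrm{span}(\partial_s)\oplus\xi_{\mathrm{std}}$, such that:
\begin{itemize}
\item its characteristic foliation is gradient-like and agrees with $\mathrm{span}(\partial_s)$ near the boundary;
\item it traps every trajectory entering at distance greater than $\epsilon$ from the side boundary $[0,1]\times\partial\mathbb{D}^{2n-1}$;
\item it displaces the trajectories that leak through by less than $\epsilon$.
\end{itemize}
An explicit model works in dimension two. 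In higher dimensions, in the natural constructions the leaking trajectories shadow characteristic leaves of $\partial\mathbb{D}^{2n-1}$ inside the contact transversal. For the round disk these leaves run pole to pole, so the small-monodromy condition fails.

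The missing idea is induction on $\dim\Sigma$, not on skeleton dimension. Use the theorem in dimension $2n-2$ to $C^0$-perturb the disk to a domain $Y$ whose boundary has short gradient-like characteristic foliation. Then build, from the explicit two-dimensional plug, a quasi-plug on $[0,1]\times Y$ whose leaking trajectories follow leaves of $\partial Y$ and so move less than $\epsilon$. Finally reduce the height by rescaling and layering many small copies. Small amplitude alone does not do this last job. A graph $s=\epsilon h$ with $h$ fixed changes the contact Hamiltonian form, and hence the characteristic foliation, only by $O(\epsilon)$ in $C^\infty$. Such a change cannot create singularities or trapping in a flow box, so a small-height barricade with fixed dynamics needs a genuine scaling argument.

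The global architecture also has to change. Walls of a triangulation cannot be everywhere transverse to $\Sigma_\eta$: by Poincar\'e--Hopf, the boundary of a small ball on which the foliation is nonsingular must have tangencies. So the model $[-\delta,\delta]\times P$ from Lemma \ref{lem:contact_transversals} is unavailable along whole walls, in particular at the corners you worry about. The correct object is a blocking collection: finitely many small transverse contact disks, bounding nothing, such that every trajectory meets the interior of one of them within a short length.

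Any plug leaks near its side boundary, so trajectories are not confined to a single cell. Instead, take the trapping size small relative to the blocking collection and show that every trajectory of the plugged foliation begins and ends at a nondegenerate singularity inside some plug. A $C^\infty$-small perturbation then gives a Morse--Smale gradient-like foliation, with trajectory lengths controlled by the blocking length.

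Your step gluing local Morse functions across one-way walls has two further problems. It needs an acyclicity argument you have not supplied. And if trajectories may cross walls in the permitted direction, their length is not bounded by a cell diameter.
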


\noindent Theorem \ref{thm:gradient_like_density} has an extremely complicated proof with subtleties that are well beyond the scope of this article. A simplified proof of the result was given by Eliashberg-Pancholi \cite{ep2022}. Let us sketch the latter proof of Theorem \ref{thm:gradient_like_density} in \cite{ep2022} at a very rough level. 

\vspace{3pt}

 One starts the proof by picking a transversal contact sub-manifold $\Gamma \subset \Sigma$ given by a disjoint union of many small contact disks of codimension one. This transversal must be a \emph{blocking collection},  in the sense that every trajectory of the characteristic foliation hits the interior of $\Gamma$ after some small bounded amount of length. This map can be extended to an embedding
\[
U = [0,1]_s \times \Gamma \to \Sigma
\]
of a small segment of symplectization of $\Gamma$ into $\Sigma$. We then modify the contact Hamiltonian structure and the characteristic foliation on the image of $U$ by replacing each of the components with a certain plug that is ambiently homotopic to the original contact Hamiltonian structure. 

\vspace{3pt}

Let us briefly describe the required properties of the plug and give a picture. Let $P$ denote the product $[0,1] \times \mathbb{D}^m$ and $L_{\on{std}}$ denote the line field spanned by $\partial_s$. We decompose the boundary as
\[
\partial_{\on{i}} P = 0 \times \mathbb{D}^m \qquad \partial_{\on{o}}P = 1 \times \mathbb{D}^m \qquad \partial_{\on{t}}P = [0,1]_s \times \partial\mathbb{D}^m
\]
These are respectively the \emph{inward boundary}, \emph{outward boundary} and \emph{tangent boundary}, named for the fact that the vector field $L_{\on{std}}$ points into, out of, and tangent to $P$ in these parts of the boundary.

\begin{definition}[Plug] \label{def:plug} A \emph{gradient-like plug} of trapping size $\epsilon$ is an oriented singular line field
\[
L \qquad\text{on the standard tube}\qquad P = [0,1]_s \times \D^m
\]
with the following properties.
\begin{itemize}
    \item[(a)] (Boundary Standard) The singular line field $L$ is the span of $L_{\on{std}} = \on{span}(\partial_s)$ near $\partial P$.
    \vspace{2pt}
    \item[(b)] (Gradient-Like) The singular line field is Morse-Smale and gradient-like.
    \vspace{2pt}
    \item[(c)] (Limit Sets) Any trajectory $\gamma$ of $L$ starting at a point $p$ in $\partial_{\on{i}}P$ such that $\on{dist}(p,\partial_{\on{t}}P) > \epsilon$ must limit to a critical point in the forward direction.
    \vspace{2pt}
    \item[(d)] (Monodromy) Any trajectory of $L$ starting at $p = 0 \times x\in \partial_{\on{i}}P$ either converges to a critical point of $L$ or ends at a point $1 \times y \in \partial_{\on{o}}P$ with $\on{dist}(x,y) < \epsilon$ where $\on{dist}$ denotes the distance in the disk with respect to the standard metric.
    \end{itemize}\end{definition}

    \begin{figure}[h]
    \centering
    \includegraphics[width=.9\linewidth]{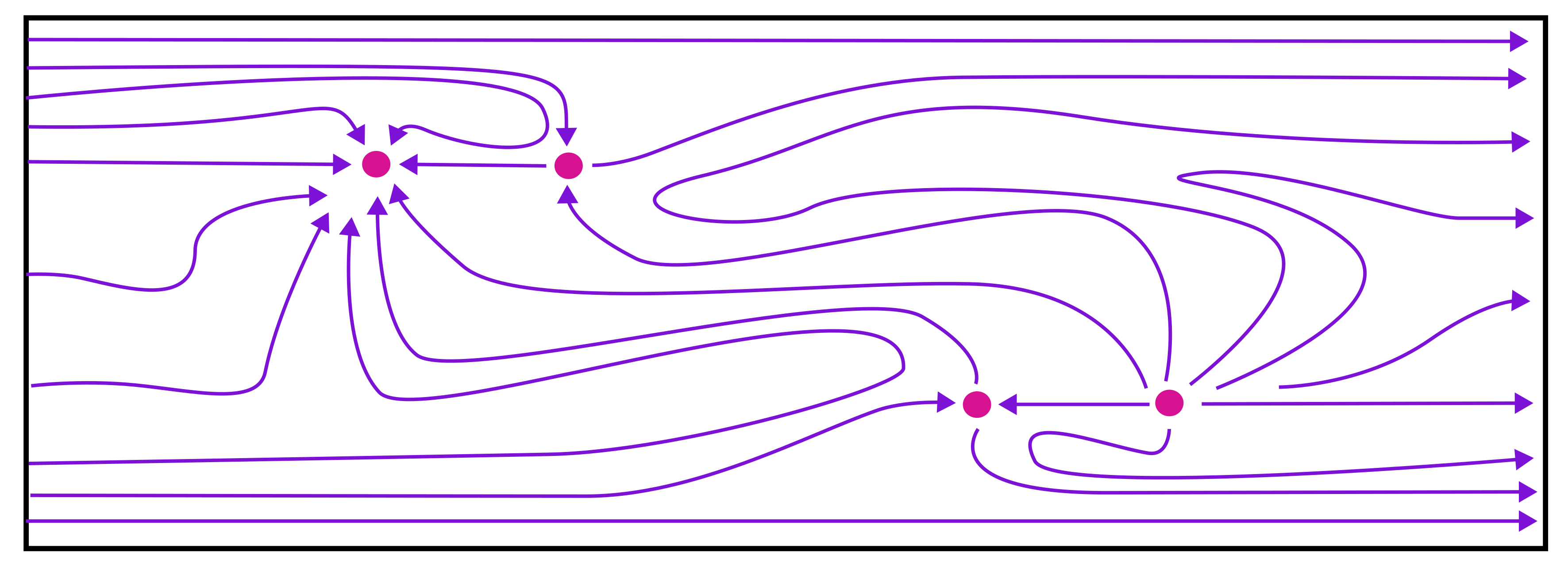}
    \caption{A 2-dimensional plug for small trapping size. }
    \label{fig:plug}
\end{figure}

\begin{remark} \cite[Rmk 2.5]{ep2022} A gradient-like plug of trapping size $\epsilon$ is also a gradient-plug of trapping size $2\epsilon$ for when the orientation of $L_{\on{std}}$ is reversed. In particular, trajectories that end on $\partial_{\on{o}}P$ far away from the boundary must originate at critical points.
\end{remark}

\begin{definition}[Contact Hamiltonian Plug] A \emph{contact Hamiltonian plug} $\eta$ of trapping size $\epsilon$ and height $\delta$ is a contact Hamiltonian structure $\eta$ on $P = [0,1]_s \times \D^{2n-1}$ such that
\begin{itemize}
    \item[(a)] The characteristic foliation $\Sigma_\eta$ is a gradient-like plug of trapping size $\epsilon$.
    \item[(b)] The contact Hamiltonian strucure $\eta$ is ambiently homotopic to the standard structure $\eta_{\on{std}} = \on{span}(\partial_s) \oplus \xi_{\on{std}}$ via an isotopy in the contactization $CP$ of $C^0$-size less than $\delta$. 
\end{itemize}\end{definition}

Given a blocking collection $\Gamma$ thickened to an embedding $[0,1] \times \Gamma \to \Sigma$, one may replace each component with a gradient-like plug to get a \emph{plugged} singular line field. One may show that, if the trapping size is small enough, then every trajectory of the plugged line field must begin and end on a non-degenerate singularity in one of the plugs. After a further $C^\infty$-small perturbation, the plugged flow becomes gradient-like and Morse-Smale. This discussion reduces Theorem \ref{thm:gradient_like_density} to the following existence result for plugs (cf. \cite[Prop 4.1]{ep2022}).

\begin{thm}[Plug Existence] \label{thm:CH_plug_existence} For every $\delta$ and $\epsilon$, there is a contact Hamiltonian plug $(P,\eta)$ of trapping size $\epsilon$ and height $\delta$.
\end{thm}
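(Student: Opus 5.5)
We construct the plug by working inside the contactization $CP = [-\epsilon',\epsilon']_t \times P$ of the standard piece $P = [0,1]_s \times \D^{2n-1}$ with $\eta_{\on{std}} = \on{span}(\partial_s)\oplus\xi_{\on{std}}$. By Lemma \ref{lem:std_nbhd} and Lemma \ref{lem:contact_transversals}, this is a neighborhood of $P$ embedded as a hypersurface. The plug is the graph of a $C^0$-small function $t = h(s,x)$, supported away from $\partial P$. Its contact Hamiltonian structure is $\eta_h = \xi \cap T(\on{graph} h)$, and the linear isotopy $t = \tau h$, $\tau\in[0,1]$, is ambient of $C^0$-size $\sup|h| < \delta$. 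Condition (b) of the contact Hamiltonian plug therefore holds automatically. Condition (a) of Definition \ref{def:plug} holds once $h$ vanishes near $\partial P$. All the work is in choosing $h$ so that the characteristic foliation of the graph is gradient-like, traps, and has small monodromy.

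The construction follows Eliashberg--Pancholi and has two stages. \textbf{Stage 1} is a model in the lowest dimension. In $\R^3$ with $\alpha = dz - y\,dx$, a $C^0$-small wiggle of a surface transverse to the Reeb-like direction creates a pair of singularities, one positive elliptic (a source) and one positive hyperbolic. The source emits flowlines, and the sink (negative elliptic) absorbs nearly every trajectory entering a small disk. This is the familiar Giroux ``folding'' picture, in which the characteristic foliation is read off directly from the slopes $\partial_x h,\partial_y h$. \textbf{Stage 2} stabilizes to higher dimensions. Write $\D^{2n-1}$ approximately as $\D^{2n-2}_{\on{std}}\times[-1,1]_z$ with the Darboux form $dz - \sum y_i dx_i$. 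We choose $h$ with the following features:

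\begin{itemize}
\item It is a small bump in $s$ that creates a ``fold'' along a codimension-one wall $\{s = 1/2\}\times D$.
\item Where the slope of the graph reverses, the characteristic flow along the fold becomes (after rescaling) the Liouville flow of a Weinstein structure on the wall, with a negative half and a positive half.
\item The fold is shaped so that this Liouville flow is gradient-like for a Morse function, with Smale transversality achieved by a $C^\infty$-small perturbation.
\end{itemize}

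Flowlines entering $\partial_{\on{i}}P$ at distance greater than $\epsilon$ from $\partial_{\on{t}}P$ then land in the negative half of the fold and converge to its critical points, which gives the trapping property (c). Near $\partial_{\on{t}}P$, the bump is tapered in an $\epsilon$-collar. There the deflection of flowlines is controlled by the size of $dh$ restricted to $\xi$, and making the taper region thin gives the monodromy bound (d).

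The main obstacle is reconciling (c) with (d) and with global gradient-likeness in the tapering collar. Trajectories passing through the collar must not be trapped in closed orbits or chains of recurrence, and they must be deflected by less than $\epsilon$. In the contact setting, changing $h$ moves the foliation by an amount governed by $dh$ rather than $h$. So a $C^0$-small $h$ can have large effect, and the taper must be arranged so that the $\xi$-component of $dh$ pushes flowlines monotonically in $s$. I would first try to make the collar flow a small perturbation of $\partial_s$ with a Lyapunov function $s + $ (small), which forbids recurrence there. I would then check, by Lemma \ref{lem:ker_lambda_description}, that the characteristic vector field of $\lambda = (\alpha)|_{\on{graph} h}$ stays $\epsilon$-close to $\partial_s$ in the collar. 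A rescaling trick should reduce the case of general $\delta,\epsilon$ to a fixed model: use the contact dilation $(z,x,y)\mapsto(c^2z,cx,cy)$ together with a covering of $\D^{2n-1}$ by many small rescaled copies of the model plug.
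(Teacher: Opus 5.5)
There is a genuine gap in property (d) near the tangent boundary $\partial_{\on{t}}P$. This is the real difficulty of the theorem, and tapering does not handle it.

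To see why, take the $t$-invariant contactization form $\alpha = (s-\tfrac12)\,dt + \beta$, where $\beta$ is the standard contact form on $\D^{2n-1}$, $R$ is its Reeb field, $\xi=\ker\beta$, and $d_yh$ is the differential of $h(s,\cdot)$ on $\D^{2n-1}$. The graph of $h$ carries $\lambda_h = \beta + (s-\tfrac12)\,dh$. A direct computation gives $\lambda_h(Z)=0$ and $\iota_Z d\lambda_h = (Rh)\,\lambda_h$ for
\[
Z = \big(1 + (s-\tfrac12)\,Rh\big)\,\partial_s - (s-\tfrac12)\,h_s\,R - X_h, \qquad X_h\in\xi, \quad \iota_{X_h}d\beta|_\xi = d_yh|_\xi .
\]
Moreover, $Z$ vanishes exactly where $\lambda_h$ does, so $Z$ spans the characteristic foliation.

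Now write $h=\rho\, g$, with $\rho$ a cutoff across a collar of width $w$. Then $X_h$ contains the term $g\,X_\rho$. The field $X_\rho$ is tangent to the level sets of $\rho$ and spans their characteristic foliations as hypersurfaces in $(\D^{2n-1},\xi)$. Its size is $|X_\rho|\sim|d\rho|_\xi|\sim 1/w$. So in the collar, trajectories are pushed along the characteristic leaves of nearly round spheres. These leaves run from one singular point to the other and have length comparable to the radius. The push gets \emph{stronger}, not weaker, as $w\to 0$. Your Lyapunov function $s+(\text{small})$ rules out recurrence but gives no bound on this sliding. For $n=1$ one has $\xi=0$ and this term disappears, which is why the $2$-dimensional plug can be written down by hand.

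The paper, following Eliashberg--Pancholi, does not try to suppress the sliding. It builds quasi-plugs on $[0,1]\times Y$ whose monodromy (d') only keeps escaping trajectories $\epsilon$-close to a characteristic leaf $\zeta$ of $\partial Y$. It then replaces $\D^{2n-1}$ by a $C^0$-close domain $Y$ whose boundary has short gradient-like characteristic foliation. Producing such a $Y$ is Theorem \ref{thm:gradient_like_density} in dimension $2n-2$, so the argument is an induction on dimension. Your proposal has no substitute for this step.

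Stage 2 also does not establish (c).
\begin{itemize}
\item The wall $\{\tfrac12\}\times D$ is $(2n-1)$-dimensional, so it cannot carry a Weinstein structure.
\item You give no mechanism by which every trajectory from $\partial_{\on{i}}P$ outside the $\epsilon$-collar enters the basin of a sink. Sinks are unavoidable: an open family of trajectories converging to finitely many hyperbolic singularities forces one with $2n$-dimensional stable manifold, and that singularity is negative by Lemma \ref{lem:hh_divergence}.
\item You also do not control the non-trapped trajectories that pass near stable manifolds of saddles; these must still exit within $\epsilon$ of where they entered.
\item The product structure on $CP$ therefore matters. With $\alpha = dt + e^s\beta$, every graph has $d\lambda_h = d(e^s\beta)$ and is positive Liouville, so it has no sinks and no graph is a plug. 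You never specify which structure you use.
\end{itemize}

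In the paper, higher-dimensional trapping comes from the explicit $2$-dimensional plug, which is used to build quasi-plugs on $[0,1]\times CW$ with $W$ a Weinstein domain. The Weinstein structure lives on a cross-sectional factor, not on a wall.

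Finally, in the height reduction the paper layers many small copies of the plug. A single covering of $\D^{2n-1}$ by small copies would let trajectories leak through the collar of every copy and violate (c).
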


\noindent Theorem \ref{thm:CH_plug_existence} is where the majority of the difficulty in Theorem \ref{thm:gradient_like_density} lies. The proof given by Eliashberg-Pancholi \cite{ep2022} follows several stages. We conclude this part by discussing these steps.

\vspace{3pt}

First, one proves that one can construct a contact Hamiltonian plug of height $\delta$ from a contact Hamiltonian plug of any height (cf. \cite[\S 5]{ep2022}). This is accomplished by constructing the small height plug by layering many small copies of the large height plug and exploiting an advantageous scaling law for the height under scaling of the region $[0,1] \times \D^{2n-1}$. This allows us to ignore the height in the remainder of the proof.

\vspace{3pt}

Second, one constructs a contact Hamiltonian plug of trapping size $\epsilon$ for any $\epsilon$ in dimension two. This can be done extremely explicitly, and a picture of such a plug is given in Figure \ref{fig:plug}. Denote the plug of dimension two by
\[
(P^2,\eta_\epsilon)
\]
Using the 2-dimensional plug, one can construct certain higher dimensional \emph{quasi-plugs}. A quasi-plug is similar to a contact Hamiltonian plug where $P$ is replaced by
\[
Q = [0,1] \times Y
\]
where $Y$ is contact manifold with boundary. Here one may view this as the cylinder $[0,1] \times CW$ where $CW$ is the contactization of the Weinstein domain $W$. The definition of a quasi-plug of trapping size $\epsilon$ is the same as the definition of a plug, except for the monodromy property (Definition \ref{def:plug}(d)) which is replaced the following modified monodromy
\begin{itemize}
    \item[(d')] (Monodromy') Let $\gamma$ trajectory of the characteristic foliation of $Q$ starting at $0 \times x \in \partial_{\on{i}}Q$ and ending at $1 \times y\in \partial_{\on{o}}Q$. Then there is a characteristic flowline $\zeta$ of the characteristic foliation of $\partial Y$, as a hypersurface in $Y$, such that $\on{dist}(x,\zeta) < \epsilon$ and $\on{dist}(y,\zeta) < \epsilon$.
\end{itemize}
Note that if the flowlines of the characteristic foliation of $\partial Y$ are all shorter than $\epsilon$, then an quasi-plug of trapping size $\epsilon$ is a plug of trapping size $\epsilon$ (with the disk replaced by a different contact manifold). 

\vspace{3pt}

Finally, one constructs a plug on $P$. This step is carried out by perturbing the disk $\mathbb{D}^{2n-1}$ to a new domain $Y \subset \mathbb{D}^{2n-1}$ such that $Y$ has very short Morse-Smale characteristic foliation, then using the quasi-plug construction from the previous step to construct a plug on $[0,1] \times Y$, which is then also a plug on $[0,1] \times \D^{2n-1} = P$. An interesting detail of this step is that it requires induction on dimension: one must have already proven the approximation result Theorem \ref{thm:gradient_like_density} in order to find the approximation $Y$.

\noindent 

\subsection{Non-Density In Higher Dimensions} A longstanding question following the work of Giroux was the extent to which the smooth genericity of convexity (Theorem \ref{thm:giroux_genericity}) extends to higher dimensions. The work of Honda-Huang \cite{hh2019} did not resolve this problem \cite[Rmk 1.2.4]{hh2019} and moreover their methods were not applicable to its resolution, due the evident failure of the density of gradient-like characteristic foliations in even the $C^1$-topology. This situation was finally rectified by recent work of the author \cite{jc2024} where the following result was proven.

\begin{thm}[Robust Non-Convexity] \label{thm:robust_non_convexity} There are closed oriented hypersurfaces in any contact manifold of dimension five or greater that cannot be approximated by convex hypersurfaces in the $C^2$-topology. 
\end{thm}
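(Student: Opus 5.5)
The idea is to find a robust dynamical obstruction to convexity and then realize it $C^2$-stably inside a hypersurface. By Lemma \ref{lem:liouville_halves}, a convex $(\Sigma,\eta)$ splits as $\Sigma = \Sigma_+ \cup_\Gamma \Sigma_-$ into conformally Liouville domains, and every point of $\Gamma$ is wandering. The non-wandering set $\NW(\Sigma_\eta)$ therefore lies in the interiors of $\Sigma_+$ and $\Sigma_-$. Each compact invariant set in $\Sigma_+$ sits in a region where some characteristic vector field has positive divergence, by the Divergence Criterion; in $\Sigma_-$ the divergence is negative. The obstruction I will use is a compact invariant set $K$ of the characteristic flow that is \emph{neither} positive nor negative Liouville. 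Since $K$ is invariant and must lie entirely in $\Sigma_+$ or $\Sigma_-$, such a $K$ cannot exist in a convex hypersurface. (If $K$ is connected, or if the set of its points of each sign is closed and invariant, $K$ lands in a single half.)

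To get a set that is neither positive nor negative Liouville, I use the mechanism of Lemma \ref{lem:isotropic_stable}. If $K$ is a hyperbolic invariant set that is positive Liouville, then its strong stable bundle is contained in $\eta$, and hence is isotropic of dimension at most $n-1$ by Lemma \ref{lem:isotropic_dimension}. The argument is exactly the proof of Theorem \ref{thm:no_Anosovs}, run locally with a Liouville form on a neighborhood of $K$. Symmetrically, negative Liouville forces the unstable bundle to be isotropic. So it suffices to build a compact, locally maximal hyperbolic set $K$ whose stable and unstable bundles both have dimension at least $n$, or which contains two periodic orbits of "wrong" opposite indices linked by transverse heteroclinic connections. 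Two such orbits would be forced into opposite halves by Lemma \ref{lem:breen_divergence}, but a heteroclinic cycle between them makes them lie in a single transitive invariant set, contradicting the splitting. In dimension $2n\ge 4$ (contact dimension $\ge 5$) there is room for this: take a horseshoe, for example the suspension of a hyperbolic basic set of a contactomorphism of a small transversal $\Gamma^{2n-1}$ with index at least $n$ orbits of both signs. Such a set can be built as a conformally symplectic linear model with mixed expansion rates. In dimension two it is impossible, which is consistent with Theorem \ref{thm:giroux_genericity}.

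Robustness comes from structural stability. Hyperbolic basic sets and transverse heteroclinic cycles persist under $C^1$-small perturbations of the flow. A $C^2$-small perturbation of the hypersurface changes $\eta$, and hence the characteristic vector field (which involves first derivatives of $\lambda$), by a $C^1$-small amount. This is exactly why the statement uses the $C^2$ topology. The plan has four steps. First, construct the model: a contact Hamiltonian structure on $[0,1]\times \mathbb{D}^{2n-1}$ (a "plug" in the spirit of Definition \ref{def:plug}) that is standard near the boundary and contains such a robust hyperbolic set $K$. Second, embed any given hypersurface's local piece by replacing a small flowbox with this plug, realized ambiently by Lemma \ref{lem:std_nbhd} and Lemma \ref{lem:stability_for_hypersurfaces}. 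Third, argue that every $C^2$-nearby hypersurface still contains the continuation of $K$ and is therefore non-convex.

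I expect the main obstacle to be the first step: producing, within the conformally symplectic constraint, a hyperbolic set whose Liouville sign is genuinely indeterminate and which is robust. Local conformal symplecticity forces each periodic orbit to have a sign. I would first try a contactomorphism of a small ball with two hyperbolic fixed points of indices $n-1$ and $n$, in the appropriate Poincaré-map sense. I would then create a transverse heteroclinic cycle between them by an explicit contact Hamiltonian perturbation supported along connecting orbits, suspend it as in Example \ref{ex:suspensions}, and finally check by hand that the cycle yields a transitive hyperbolic set containing orbits of both signs.
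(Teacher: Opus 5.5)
\textbf{The robustness mechanism you propose cannot work.} Your obstruction is in the right spirit: an indecomposable invariant set carrying both Liouville signs, i.e.\ a retrograde connection. But hyperbolicity and structural stability provably cannot produce it, for three reasons.
\begin{itemize}
\item On a hyperbolic set of the nonsingular characteristic flow, $\dim E^s+\dim E^u=2n-1$. So the strong bundles can never both have dimension $\ge n$.
\item A \emph{transverse} cycle between a positive orbit $p$ and a negative orbit $q$ does not exist. Lemma~\ref{lem:breen_divergence} gives $\on{ind}(p)\le n-1$ and $\on{ind}(q)\ge n$, so $\dim W^u(q)+\dim W^s(p)=2n-\on{ind}(q)+\on{ind}(p)+1\le 2n$. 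Hence the retrograde branch $q\to p$ can never be a transverse intersection containing a flow line. This is exactly how the Smale condition rules out retrograde connections in Theorem~\ref{thm:MS_criterion}.
\item A transitive hyperbolic set has constant index, because the splitting is continuous and invariant. So all of its periodic orbits have the same sign. Indeed, localizing the proof of Theorem~\ref{thm:no_Anosovs} to the set shows that whichever of $E^s,E^u$ has dimension $\ge n$ is nowhere contained in $\eta$. This forces uniform exponential decay or growth of the conformal factor $F_T$ along the set, and an averaging rescaling of $\lambda$ then makes it Liouville of a definite sign nearby.
\end{itemize}
So no hyperbolic basic set, horseshoe or transverse cycle can be your $K$, and the ``check by hand'' in your last step would necessarily fail. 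Any robust mixed-sign set must be non-hyperbolic. It must contain a non-transverse (coindex one) retrograde connection that nevertheless survives all $C^1$-small perturbations.

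\textbf{The missing idea is the blender.} A blender in the sense of Bonatti--Diaz is exactly what makes such a connection robust, and it is the heart of the paper's proof. The paper also uses a simpler, global obstruction: the characteristic foliation of a convex structure is never transitive, since every trajectory crosses $\Gamma$ at most once. So it suffices to build a $C^2$-robustly transitive foliation. The paper does this in three steps:
\begin{itemize}
\item perturb the time-$T$ map of a hyperbolic geodesic flow on $S\Lambda$ to a $C^1$-robustly transitive contactomorphism containing a blender (Theorem~\ref{thm:robust_contactomorphism});
\item suspend it as in Example~\ref{ex:suspensions} to get $\Sigma=\R/\Z\times S\Lambda$;
\item embed this particular $\Sigma$ in $\R^{2n+1}$, hence in a Darboux ball of any contact manifold, using Legendrian embeddings of $\Lambda$ supplied by Murphy's $h$-principle.
\end{itemize}

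\textbf{Two further steps fail as stated.} First, a connected compact invariant set need not lie in one half. The closure of a trajectory running from a positive singularity across $\Gamma$ to a negative one is a counterexample; you need chain transitivity instead. Second, Lemmas~\ref{lem:std_nbhd} and~\ref{lem:stability_for_hypersurfaces} only identify the contact germ determined by a given $\eta$. They do not show that a locally modified $\eta'$ is induced by a $C^0$-small perturbation of a given hypersurface in $(Y,\xi)$. That is a genuine constraint; it is the height condition on Eliashberg--Pancholi plugs. The paper avoids it by embedding one explicit global example rather than inserting a local plug.
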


\noindent Based on this result and inspired by terminology from the smooth dynamics literature, we introduced the following terminology in \cite{jc2024}.

\begin{definition}[Robust Non-Convexity] A contact Hamiltonian manifold $(\Sigma,\eta)$ is \emph{robustly non-convex} if $\eta$ cannot be approximated in the $C^2$-topology by a convex contact Hamiltonian structure.
    
\end{definition}

\noindent The mechanism for robust non-convexity is dynamical, and it hints at a fundamental relationship with generic dichotomies in differentiable dynamics as pioneered by Bonatti-Diaz \cite{bd2008,bdp2003,bonattidiaz1995} and others. Let us conclude this section by sketching the proof of Theorem \ref{thm:robust_non_convexity}.

\vspace{3pt}

The starting point for the proof of Theorem \ref{thm:robust_non_convexity} is the simple observation that the characteristic foliation of a convex hypersurface (or contact Hamiltonian manifold) cannot be transitive.

\begin{definition}[Transitive] A smooth flow $\Phi$ on a manifold $M$ is \emph{transitive} if it has a dense orbit.
\end{definition}

\begin{lemma}[Non-Transitivity] Let $(\Sigma,\eta)$ be a convex contact Hamiltonian manifold. Then the characteristic foliation $\Sigma_\eta$ (or equivalently, any characteristic flow) is not transitive.
\end{lemma}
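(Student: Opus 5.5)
We will argue that the dividing set $\Gamma = u^{-1}(0)$ attached to a framing function $u$ obstructs the existence of a dense orbit. Fix a framing function $u$ and let $Z$ be the characteristic vector field associated to the volume form $\mu = u\, d\lambda^n - n\, du \wedge \lambda \wedge d\lambda^{n-1}$. By the lemma showing that the dividing set is contact, $du(Z) \neq 0$ along $\Gamma$, so $Z$ is transverse to $\Gamma$. Moreover, $Z$ crosses $\Gamma$ in one direction only, from $\Sigma_+$ into $\Sigma_-$. This is the sign convention used in Lemma \ref{lem:liouville_halves}, where the Liouville field of $\Sigma_+$ points outward along $\partial\Sigma_+ = \Gamma$. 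The consistency of the sign follows because $\mu$ is a volume form and, along $\Gamma$, equals $-n\,du\wedge\lambda\wedge d\lambda^{n-1}$. Contracting with $Z$ and using $\iota_Z(\lambda\wedge d\lambda^{n-1}) = 0$ gives $\iota_Z\mu = -n\,du(Z)\,\lambda\wedge d\lambda^{n-1}$. Comparing this with the defining equation (\ref{eq:characteristic_vectorfield}) shows $du(Z) = -1/n < 0$ along $\Gamma$. Hence $u$ is strictly decreasing along $Z$ at every crossing of $\Gamma$.

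Next we show that every orbit meets $\Gamma$ at most once. By the previous step, each crossing takes the orbit from $\{u>0\}$ into $\{u<0\}$. A second crossing would have to enter $\Gamma$ from the negative side, that is, with $u$ increasing. This contradicts $du(Z) < 0$ on $\Gamma$. Consequently, every orbit $\mathcal O$ falls into one of three cases. It may lie entirely in $\on{int}\Sigma_+$, or entirely in $\on{int}\Sigma_-$. Otherwise it crosses $\Gamma$ exactly once, so its forward half lies in $\Sigma_-$ and its backward half lies in $\Sigma_+$.

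Now suppose some orbit $\mathcal O$ is dense. Note first that $\Gamma$ is a nonempty hypersurface. Indeed, if $u$ never vanished, then $(u,-du)$ being a framing would force $d\lambda^n$ to be nonvanishing on a closed manifold, with $\lambda$ a global Liouville form. This is impossible by Stokes, since $\int d\lambda^n = 0$. (If the ambient setting allows the noncompact case, it should be treated separately.) Consequently both $\on{int}\Sigma_+$ and $\on{int}\Sigma_-$ are nonempty open sets. A dense orbit must therefore meet both of them, which rules out the first two cases, so $\mathcal O$ crosses $\Gamma$ exactly once at a point $p$.

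The main step is to show that $\mathcal O$ still cannot be dense. The set $\Gamma$ has a flow-box collar $N \cong (-\epsilon,\epsilon)\times\Gamma$ by Lemma \ref{lem:contact_transversals}. Near any point $q \in \Gamma$ with $q \neq p$, take a small flow-box $B$ disjoint from $\mathcal O \cap N$ except possibly for the single arc through $p$. Then $\mathcal O$ can enter $B$ only along that arc, because entering $N$ would require a new crossing of $\Gamma$. So $B \cap \mathcal O$ is nowhere dense in $B$, which contradicts density whenever $\dim\Gamma \ge 1$. We expect the delicate point to be the handling of orbits that accumulate on $N$ from within one half without entering it. This is ruled out by the one-crossing property, since a point of $\mathcal O$ lying in $N$ must lie on a flow arc that itself crosses $\Gamma$. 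It therefore suffices to check that flow arcs in $N$ all cross $\Gamma$, which holds by construction of the collar.
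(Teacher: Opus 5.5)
Your proof is correct and is essentially the paper's argument. The characteristic foliation crosses the dividing set $\Gamma$ transversely and only from $\Sigma_+$ into $\Sigma_-$, so an orbit meets a flow-box collar of $\Gamma$ in at most one arc and cannot be dense; you just supply details the paper leaves implicit, namely the sign $du(Z)=-1/n$, the nonemptiness of $\Gamma$, and the nowhere-density of that arc. One small slip: when $u$ has no zeros, it is the rescaled form $\lambda/u$, not $\lambda$ itself, that becomes a global Liouville form (as in the proof of Lemma~\ref{lem:liouville_halves}), since $d\lambda^n$ alone may still vanish somewhere.
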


\begin{proof} If $(\Sigma,\eta)$ is convex, then admits a dividing set $\Gamma$ bounding a sub-domain  $\Sigma_- \subset \Sigma$ such that $\Sigma_\eta$ is transverse to $\Gamma$, pointing into $\Sigma_-$. Therefore, any trajectory can only visit a small neighborhood of $\Gamma$ once, which makes density impossible. 
\end{proof}

Theorem \ref{thm:robust_non_convexity} is thus reduced to the problem of constructing a contact Hamiltonian manifold $(\Sigma,\eta)$ with robustly transitive characteristic foliation. To understand this problem, consider the analogous problem in the setting of ordinary differentiable dynamics. In that setting, the simplest example of robustly transitive flows are transitive Anosov flows, which are structurally stable. Unfortunately, Theorem \ref{thm:no_Anosovs} shows that we cannot hope to find such examples in the setting of characteristic foliations. 

\vspace{3pt}

Luckily, the problem of finding non-hyperbolic and robustly transitive dynamical systems was addressed in the smooth setting by seminal work of Bonatti-Diaz \cite{bonattidiaz1995}. The key step to their construction was the construction of an object called a \emph{blender} \cite{whatisblender}, which is the primary mechanism for such phenomena. Roughly speaking, a blender is a type of structure consisting of two hyperbolic periodic orbits of complementary index, whose stable and unstable manifolds intersect robustly along a larger-than-expected set. In particular, this produces a robust cycle of heteroclinics connecting the two points. Blenders were introduced in \cite{bonattidiaz1995} and subsequently became a fundamental tool in differentiable dynamics \cite{bonatti2004dynamics,bd2008,whatisblender}. 

\vspace{3pt}

The construction of a blender given in e.g. \cite{bonattidiaz1995} is delicate and several obstacles appear in the adaptation of this construction to the contact case. Regardless, this can be done and this results in the following theorem.

\begin{figure}[h]
    \centering
    \includegraphics[width=.9\linewidth]{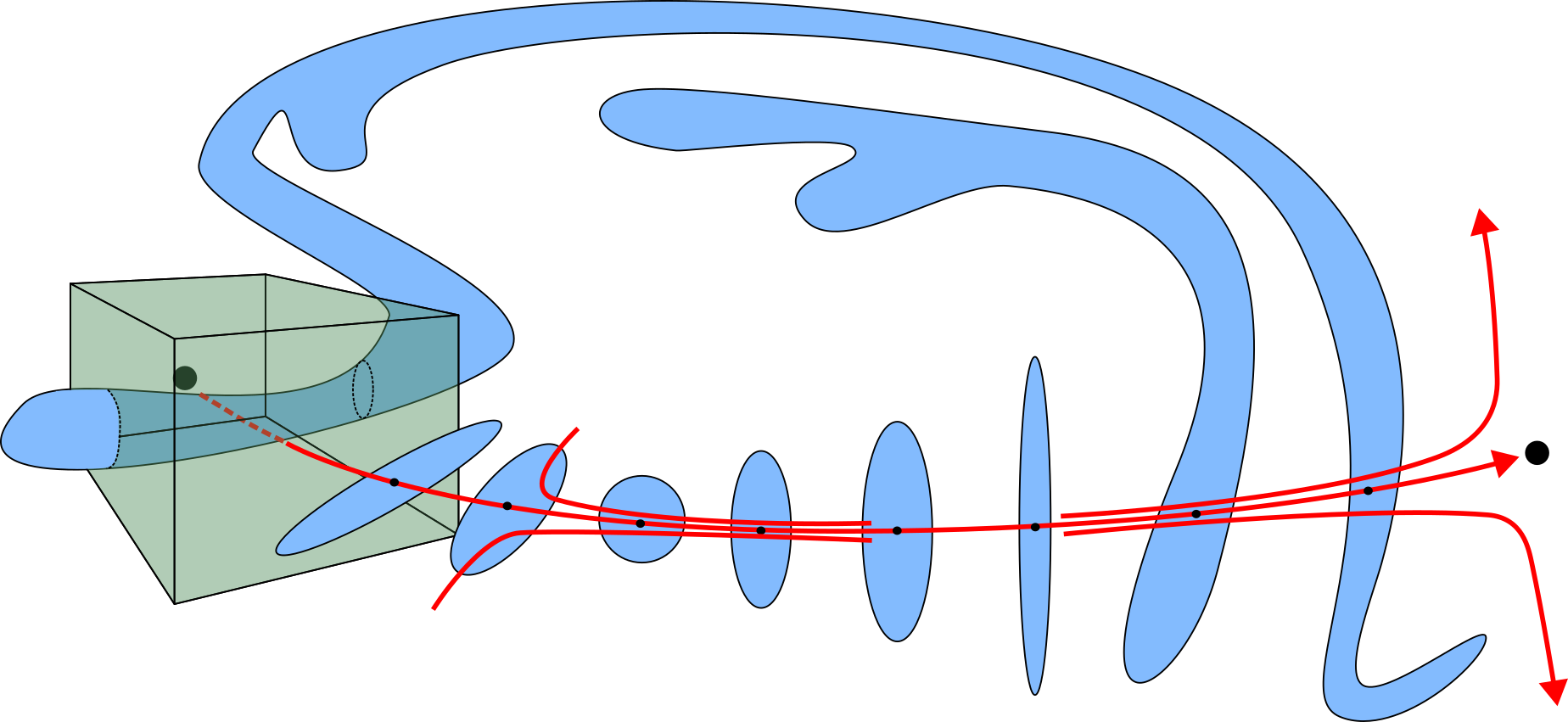}
    \caption{A cartoon of a blender. Here the two points in black are the interacting periodic points and the red curve is one of the heteroclinics.}
    \label{fig:blender}
\end{figure}

\begin{thm} \cite{jc2024} \label{thm:robust_contactomorphism} Let $S\Lambda$ be the unit cosphere bundle of a hyperbolic manifold and let $\Phi:S\Lambda \to S\Lambda$ be the time-$T$ map of the (hyperbolic) geodesic flow where $T$ is the period of a closed orbit. Then there is a $C^\infty$-small, $C^1$-robustly transitive contact perturbation $\Psi$ containing a blender $B$.
\end{thm}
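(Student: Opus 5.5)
The plan follows the Bonatti--Díaz strategy for producing robust transitivity in the partially hyperbolic setting. It has to be adapted so that every perturbation is generated by a contact Hamiltonian, and the blender has to be built inside a contact (rather than symplectic) normal form.

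\textbf{Step 1: the partially hyperbolic setup.} The starting map $\Phi_T$ is the time-$T$ map of an Anosov contact flow. As a diffeomorphism it is partially hyperbolic, with center bundle $E^c = \on{span}(R)$ spanned by the Reeb (geodesic) vector field, and strong bundles $E^s, E^u \subset \xi$. The center foliation by flow lines is normally hyperbolic and plaque expansive. Hence, by Hirsch--Pugh--Shub, every $C^1$-nearby diffeomorphism, and in particular every nearby contactomorphism, is partially hyperbolic with a center foliation by circles and lines conjugate to the orbit foliation. Pick a closed geodesic $\gamma$ of period $T$. Every point of $\gamma$ is fixed by $\Phi$, and $\gamma$ is a normally hyperbolic circle of fixed points.

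\textbf{Step 2: create two saddles of different index.} Perturb by a $C^\infty$-small contact isotopy generated by a contact Hamiltonian $H$ supported near $\gamma$. In a Darboux-type neighborhood $S^1_t \times D$ of $\gamma$, with contact form $dt + \beta$, choose $H = \epsilon\, h(t)$ times a cutoff, where $h$ is a Morse function on the circle. The resulting dynamics along the center direction is a north--south map on $\gamma$ with two fixed points $p, q$. Because the center is one-dimensional, $p$ and $q$ have stable indices differing by one: $\on{ind}(q) = \on{ind}(p) + 1$. This is consistent with the conformally symplectic asymmetry of Lemma~\ref{lem:breen_divergence}: the contact conformal factor $F$ at the two points has opposite sign of $\log F$.

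Using accessibility of the strong stable and strong unstable foliations, which holds for the geodesic flow because its stable and unstable bundles are jointly non-integrable as a consequence of contactness, I will find heteroclinic intersections $W^u(p) \pitchfork W^s(q)$ and a quasi-transverse intersection $W^u(q) \cap W^s(p)$. Together these form a heterodimensional cycle.

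\textbf{Step 3: the blender (main obstacle).} Next I unfold the heterodimensional cycle into a $cu$-blender in the sense of Bonatti--Díaz, using only contact perturbations. Following \cite{bonattidiaz1995}, I would choose a horseshoe in a transverse section whose center direction is expanded affinely with two overlapping branches. The covering property, that the images of the two center intervals overlap and cover, yields a robust open family of $cs$-discs crossing $W^u$ of the blender's hyperbolic set. The difficulty is that the return map must be a contactomorphism, so I cannot prescribe the center derivative independently of the conformal factor on $\xi$. I would work in the local normal form $\R_s \times \Gamma$ given by Lemma~\ref{lem:contact_transversals} and realize the required affine center dynamics by composing the conformal time-$\tau$ rescaling $(s,x) \mapsto (s + \tau, x)$ with Hamiltonian-generated translations in the strong directions. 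I would then check that the covering and cone-field conditions persist, since they are $C^1$-open.

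\textbf{Step 4: robust transitivity.} Once the blender is in place, robust transitivity follows as in Bonatti--Díaz, by combining three ingredients.
\begin{itemize}
\item[(a)] Minimality of the strong foliations of the perturbed map, which holds robustly near the time-$T$ map of a transitive Anosov flow by the theorem of Bonatti--Díaz--Ures.
\item[(b)] The blender, which implies that the homoclinic classes of $p$ and $q$ coincide robustly.
\item[(c)] Density of $W^s(p)$ and $W^u(p)$, which follows from (a) because these manifolds contain full strong leaves.
\end{itemize}
This gives a dense orbit for every $C^1$-nearby contactomorphism $\Psi'$.

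All perturbations are $C^\infty$-small because they are generated by $C^\infty$-small Hamiltonians. The step I expect to be delicate is Step 3, in particular reconciling the blender's center overlap condition with the contact conformal factor.
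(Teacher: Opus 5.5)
Your overall architecture follows the Bonatti--Diaz scheme that the paper says \cite{jc2024} adapts: a partially hyperbolic time-$T$ map, two fixed points of different index on a closed geodesic, and a heterodimensional cycle unfolded into a blender. The genuine gap is in Step 4. Bonatti--Diaz--Ures do not prove that the strong foliations are minimal for every $C^1$-perturbation of the time-$T$ map. Robust minimality is obtained only on open subsets of $\mathrm{Diff}^1$ near it, reached by perturbations that are not contact. Contactomorphisms form a nowhere dense subset of $\mathrm{Diff}^1$, so no open (or even open and dense) statement there applies to $\Psi$. There is also a warning sign in your own logic: if (a) held as stated, (c) alone would give a dense orbit for every nearby map, and the blender in (b) would do nothing.

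Write $\dim S\Lambda = 2n-1$, with $p$ center-repelling and $q$ center-attracting on $\gamma$. The density actually at stake is that of the thin manifolds $W^s(p)=W^{ss}(p)$ and $W^u(q)=W^{uu}(q)$, which are single strong leaves. What is robust is the density of $W^u(p)\supset W^{cu}(\gamma)\setminus W^{uu}(q)$ and of $W^s(q)\supset W^{cs}(\gamma)\setminus W^{ss}(p)$. This follows from Hirsch--Pugh--Shub leaf conjugacy: $W^{cu}(\gamma)$ and $W^{cs}(\gamma)$ correspond to the weak unstable and weak stable manifolds of the closed geodesic, which are dense because the geodesic flow is transitive.

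The blender is what supplies the missing density. Take a $cu$-blender homoclinically related to $p$. Its $(n-1)$-dimensional stable lamination lies in $\overline{W^s(p)}$ and meets every $(n-1)$-disc tangent to the strong unstable cone that crosses its superposition region. Arrange that $W^{uu}(q)$ crosses that region; then so does $W^{uu}(y)$ for $y\in\gamma$ near $q$. Pulling the strong stable discs through these intersection points back along $\gamma$ gives $\overline{W^s(p)}\supset\bigcup_{y\in\gamma}W^{ss}(y)=W^{cs}(\gamma)$, which is dense. Together with the density of $W^u(p)$, this yields robust transitivity.

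There are two smaller problems. In Step 3, the normal form of Lemma \ref{lem:contact_transversals} is the wrong model. It describes an even-dimensional contact Hamiltonian manifold near a contact transversal, and $(s,x)\mapsto(s+\tau,x)$ induces the identity on the transversal, so it gives no center expansion for a map of $S\Lambda$. The relevant constraint lives in $S\Lambda$ itself. If $\Psi^*\alpha=F\alpha$ and $\Psi(x)=x$, then $T\Psi$ acts on $TS\Lambda/\xi$, that is on the center, by $F(x)$, while its eigenvalues on $\xi$ pair as $\mu, F/\mu$. So the overlapping center expansion must be carried by the conformal factor. This is compatible with domination: in Darboux coordinates $\alpha=dz-\sum_i y_i\,dx_i$, the maps $(z,x,y)\mapsto(\lambda z+a\langle e,x\rangle+c,\ ax+d,\ (\lambda/a)y+e)$ are contactomorphisms with conformal factor $\lambda$. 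With $a<1<\lambda$ and branches differing in $c,d$, they give affine models for a contact blender.

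In Step 2, accessibility produces $su$-paths, not an intersection of the two specific leaves $W^{uu}(q)$ and $W^{ss}(p)$. That codimension-one coincidence has to be tuned. One way is to place the zeros of $h$ (these, not its critical points, are the fixed points) according to the phase shift along a transverse homoclinic orbit of $\gamma$, then correct with a small local Reeb push along that orbit.
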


Using the suspension construction for contactomorphisms (Example \ref{ex:suspensions}), this can be used to produce a contact Hamiltonian structure $\eta$ with $C^2$-robustly transitive characteristic foliation on the product $\Sigma = \R/\Z \times S\Lambda$. For a specific family of hyperbolic manifolds $\Lambda$, one can find Legendrian embeddings $\Lambda \to \R^{2n+1}$ into standard contact $(2n+1)$-space using Murphy's h-principle \cite{m2012}. This can then be used to construct an embedding $(\Sigma,\eta) \to \R^{2n+1}$, yielding Theorem \ref{thm:robust_non_convexity}. 

\begin{remark}[Topologies] The transition from the $C^2$-topology in Theorem \ref{thm:robust_non_convexity} to the $C^1$-topology in Theorem \ref{thm:robust_contactomorphism} comes from the fact that the map from contact Hamiltonian forms to characteristic vector fields induced by (\ref{eq:characteristic_vectorfield}) to is continuous in the $C^2$-topology on the domain and the $C^1$-topology on the target. 
\end{remark}

\noindent The proof of Theorem \ref{thm:robust_non_convexity} suggests a potential connection between robust non-hyperbolicity and convexity. We will discuss some conjectures and questions in this direction in the next section.

\section{Problems, Questions And Conjectures} \label{sec:questions_and_conjectures}

We conclude with a discussion of various problems, questions and conjectures about the topology of contact Hamiltonian manifolds and the dynamics of their characteristic foliations.

\subsection{Existence And Tightness} \label{subsec:tightness} It is natural to ask when a specific even-dimensional manifold admits a contact Hamiltonian structure, or more restrictively a convex structure. The analogous problem for symplectic and contact structures has been a major topic of research since the emergence of symplectic topology (cf. \cite[Problem 1]{ms2017}). 

\vspace{3pt}

The formal existence problem for contact Hamiltonian structures (cf. \cite{cieliebak2024introduction}) is equivalent to the formal existence problem for contact structures on the contactization. More precisely, recall the following definition.  

\begin{definition}[Almost Contact] A (cooriented) \emph{almost contact structure} $E$ on an oriented $(2n+1)$-manifold $Y$ consists of a cooriented hyperplane field
\[E \subset TY \text{ with a fiberwise symplectic structure }\omega\]
\end{definition}

\noindent The existence problem for contact structures was largely addressed by the fundamental work of Borman-Eliashberg-Murphy \cite{borman2015existence}. There the authors introduced the notion of an \emph{overtwisted} contact structure on a contact manifold in any dimension, and proved the following result.

\begin{thm}[Contact H-Principle] Any almost contact structure $E$ on a closed manifold $Y$ is isotopic (through almost contact structures) to an overtwisted contact structure $\xi$ on $Y$. \end{thm}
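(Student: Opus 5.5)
The statement is the Borman--Eliashberg--Murphy h-principle. A full proof is far beyond a survey, so the plan is to reproduce the architecture of \cite{borman2015existence}. Throughout, we assume the relative h-principle for contact structures on open manifolds (Gromov), which the paper has not stated.

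The first step is a reduction to a neighborhood of the codimension-one skeleton. Fix a triangulation of $Y$. Gromov's h-principle for contact structures on open manifolds then deforms $E$, through almost contact structures, to an almost contact structure that is genuinely contact outside a disjoint union of small balls. It remains to extend the contact structure over each ball $B$, working relative to $\partial B$. Near $\partial B$ we already have a germ of a contact structure on $S^{2n}$. In the intrinsic language of Section \ref{sec:contact_Hamiltonian_manifolds}, this germ is a contact Hamiltonian structure $\eta$ on the sphere $S^{2n} = \partial B$, together with its contactization model from Lemma \ref{lem:std_nbhd}. The formal data on $B$ supplies an almost contact extension.

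The second step inserts an overtwisted disk. Before filling, we modify the germ near $\partial B$ inside the ball by adding a standard overtwisted piece, using a connected sum with a model overtwisted sphere that lies in the same formal class. This is the step where overtwistedness enters. The overtwisted disk is the local model that makes the filling problem flexible.

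The third step solves the extension problem for the shells, and it is the heart of the argument. Following \cite{borman2015existence}, we reduce the filling of a general germ on $S^{2n}$ to the filling of a specific family of germs. These are germs of the form $\partial(\text{ball in the contactization of } \mathbb{D}^{2n-2}\times$ a disk), parametrized by a contact Hamiltonian $K$ on the boundary. The reduction proceeds by induction on dimension, in the spirit of the plug and quasi-plug constructions of Section \ref{subsec:convex_density_higher_d}. One decomposes the sphere into pieces whose characteristic foliations are controlled, after approximating, by the gradient-like density result (Theorem \ref{thm:gradient_like_density}). One then shows that each piece is a circular model $\Sigma_K$. The key lemma is that any circular model $\Sigma_K$ with $K$ bounded above by the overtwisted model can be filled once an overtwisted disk is present: the overtwisted region lets us ``shrink'' $K$ by a contact isotopy until the model becomes standard and visibly fillable.

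The main obstacle is this key lemma. It involves showing that every shell is dominated by, and hence fillable relative to, the overtwisted model, with parametric control so that the resulting contact structure stays in the original formal homotopy class. I would first try to establish domination through a monotonicity argument on contact Hamiltonians, comparing $K$ with the overtwisted Hamiltonian, and then check that the formal class is preserved using the contractibility of the relevant framing space (Remark \ref{rmk:convexity_of_framing}). The final step glues the fillings back into $Y$ and verifies that the isotopy of almost contact structures was constructed in a relative way at every stage, which gives an overtwisted contact structure $\xi$ formally isotopic to $E$.
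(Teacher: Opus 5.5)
The paper gives no proof of this statement; it imports it as a black box from \cite{borman2015existence}. Your top-level outline matches the architecture of that paper: Gromov's h-principle to reduce to shells, circular models $\Sigma_K$ indexed by contact Hamiltonians, domination, and an overtwisted disk as the source of flexibility. But the proposal is not a proof, and the gap is exactly where the difficulty lies. The step you call the key lemma is that every circular model becomes fillable once an overtwisted disk is present, with control of the formal class. This is the main technical content of \cite{borman2015existence}, and a ``monotonicity argument on contact Hamiltonians'' cannot supply it. Monotonicity in $K$ only gives domination, meaning one circular model sits inside another with genuinely contact complement. That only transfers solidity from a model to any model containing it. It needs a solid model as input. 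You also give no mechanism by which an overtwisted disk changes $K$. Finally, a general $K$ is not bounded by the overtwisted one. So you would still need to reduce an arbitrary circular model to ones comparable with the overtwisted model, and nothing in your sketch does this.

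The surrounding steps also fail as written.

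\emph{The reduction step.} Your reduction via Theorem \ref{thm:gradient_like_density} is a category error. Pieces of $\partial B = S^{2n}$ are $2n$-dimensional, whereas a circular model $\Sigma_K$ is a $(2n+1)$-dimensional shell, so no piece of the sphere ``is'' a $\Sigma_K$. Making $\partial B$ convex merely repackages the germ as a dividing set with two Liouville halves. Filling a convex sphere with arbitrary halves is as hard as the original problem. Plugs and induction on dimension play no role here, and the reduction in \cite{borman2015existence} uses no convex hypersurface theory.

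\emph{The insertion step.} Your second step is circular: an overtwisted sphere in the standard formal class is the case $Y = S^{2n+1}$ of the theorem you are proving. What works is to plant a neighborhood of the overtwisted disk inside a Darboux ball at the cost of a new non-genuine region. You choose the almost contact interpolation there so that the total structure is formally unchanged rel boundary, and only then fill all the holes.

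\emph{The formal class.} Remark \ref{rmk:convexity_of_framing} concerns framings for the contactization of a contact Hamiltonian manifold and says nothing about formal classes. Fillings of a shell can differ rel boundary by elements of $\pi_{2n+1}(SO(2n+1)/U(n))$. This ambiguity has to be tracked through every step of the construction.
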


\noindent This implies a corresponding existence result for convex contact Hamiltonian structures.

\begin{prop}[Existence] \label{prop:exitence_of_CHS} Let $\Sigma$ be a closed oriented $2n$-manifold and let $E$ be an almost contact structure on $[-1,1] \times \Sigma$. Then there is an convex contact Hamiltonian structure $\eta$ on $\Sigma$ such that the induced contact structure $\xi$ on the contactization $C\Sigma = [-1,1] \times \Sigma$ is homotopic to $E$.
\end{prop}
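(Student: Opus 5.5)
Since $\partial_s$ is transverse to $0 \times \Sigma$, I will look for a contact structure $\xi$ on $Y = [-1,1]\times\Sigma$, homotopic to $E$, that is invariant under translation in $s$ near $0\times\Sigma$. The induced structure $\eta = \xi\cap T\Sigma$ is then convex by Definition \ref{def:convex_hypersurface}, or intrinsically by Definition \ref{def:convex_structure}. The plan has three steps: reduce to a closed manifold so that the contact h-principle applies, use the h-principle to get a genuine contact structure, and then arrange convexity of a copy of $\Sigma$ by a $C^0$-small perturbation.

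\emph{Step 1 (closing up).} Double $Y$ along its boundary, or glue the ends, to get a closed manifold $\hat Y$ such as $S^1\times\Sigma$. Extend $E$ to an almost contact structure $\hat E$ on $\hat Y$. Since $Y$ deformation retracts to $0\times\Sigma$, I may first homotope $E$ to be $s$-invariant, and then it extends to $S^1\times\Sigma$ by $s$-invariance. Applying the Contact H-Principle gives an overtwisted contact structure $\hat\xi$ on $\hat Y$ homotopic to $\hat E$. Restricting to $Y$ gives $\xi$ homotopic to $E$.

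\emph{Step 2 (convex approximation).} Apply Theorem \ref{thm:honda_huang} to the hypersurface $0\times\Sigma\subset(\hat Y,\hat\xi)$. This gives a $C^0$-close convex hypersurface $\Sigma'$, smoothly isotopic to $0\times\Sigma$; concretely, it comes from the ambient isotopy of Theorem \ref{thm:gradient_like_density} combined with Theorem \ref{thm:MS_criterion}. Pull back by a diffeomorphism of $\hat Y$ isotopic to the identity that carries $\Sigma'$ to $0\times\Sigma$. The pulled-back structure, still called $\xi$, is homotopic to the original, and $\eta = \xi\cap T\Sigma$ is a convex contact Hamiltonian structure.

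\emph{Step 3 (identify the contactization).} By Lemma \ref{lem:std_nbhd}, the contactization $C\Sigma$ of $(\Sigma,\eta)$ embeds as a neighborhood of $0\times\Sigma$ in $(Y,\xi)$. Hence the contact structure $\xi_\eta$ on $C\Sigma$ agrees, up to isotopy, with $\xi$ restricted to a thin collar. The collar inclusion is a homotopy equivalence, so almost contact structures on $[-1,1]\times\Sigma$ are determined up to homotopy by their restriction to the collar. Rescaling the interval, $\xi_\eta$ is therefore homotopic to $E$.

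The main obstacle is bookkeeping rather than geometry. One must check that the closing-up in Step 1 does not change the homotopy class on $Y$; the $s$-invariance trick handles this. One must also check that the diffeomorphism in Step 2 is isotopic to the identity, so the homotopy class is preserved; this holds because the approximating isotopy is $C^0$-small and ambient. If the doubling raises orientation issues, I would use $S^1\times\Sigma$ with the $s$-invariant extension throughout.
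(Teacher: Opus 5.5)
Your proposal is correct and follows essentially the same route as the paper. You use the Borman--Eliashberg--Murphy h-principle to get an overtwisted contact structure homotopic to $E$, make $0 \times \Sigma$ convex by the $C^0$-small ambient isotopy behind Theorem~\ref{thm:honda_huang}, and then use Lemma~\ref{lem:std_nbhd} to identify the contactization with a collar via an embedding homotopic to the inclusion. Two differences are only careful bookkeeping: you close up to $S^1 \times \Sigma$, whereas the paper applies the h-principle (stated for closed manifolds) directly to $[-1,1] \times \Sigma$, and you move the convex hypersurface back to $0 \times \Sigma$ by an ambient diffeomorphism, whereas the paper works with the perturbed embedding $\iota$.
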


\begin{proof} By applying the Borman-Eliashberg-Murphy $h$-principle  for contact structures \cite{borman2015existence} we can find an overtwisted contact structure $\xi$ on $[-1,1] \times \Sigma$ homotopic to $E$. By applying Theorem \ref{thm:honda_huang}, we can then perturb the embedding
\[
\Sigma = 0 \times \Sigma \to [-1,1] \times \Sigma \qquad\text{to a $C^0$-close embedding}\qquad \iota:\Sigma \to [-1,1] \times \Sigma
\]
such that $\eta = \iota^*\xi$ is convex. The contactization $C\Sigma = [-1,1] \times \Sigma$ with respect to $\eta$ then embeds into $[-1,1] \times \Sigma$ by Lemma \ref{lem:std_nbhd} by an embedding that is homotopic to the identity. It follows that the contact form on the contactization is homotopic to $E$. 
\end{proof}

Contact Hamiltonian manifolds possess a tight-overtwisted dichotomy that is inherited from the contact setting. In particular, we adopt the following terminology.

\begin{definition}[Overtwisted] A contact Hamiltonian structure $\eta$ on a manifold $\Sigma$ is \emph{overtwisted} if any neighborhood of $\Sigma$ in its contactization is overtwisted. 
\end{definition}

\noindent It seems likely that the contact Hamiltonian structure constructed in Proposition \ref{prop:exitence_of_CHS}, via the Borman-Eliashberg-Murphy existence result, is overtwisted. However, this is not immediate and thus we pose the following question.

\begin{question} Is the contact Hamiltonian structure $\eta$ from Proposition \ref{prop:exitence_of_CHS} always overtwisted?
\end{question}

\noindent More generally, it is natural to ask if every contact Hamiltonian manifold admits a tight contact Hamiltonian manifold. 

\begin{question}[Tight Structures] \label{qu:tight_structures} Let $\Sigma$ be a closed 4-manifold (or more generally, $2n$-manifold) such that $[-1,1] \times \Sigma$ is almost contact. Does $\Sigma$ admit a tight contact Hamiltonian structure? 
\end{question}

\begin{remark} It is well known by work of Etnyre-Honda \cite{e2001} that there are closed 3-manifolds admitting no tight contact structures. Question \ref{qu:tight_structures} asks for a similar example in dimension four.
\end{remark}

\begin{remark} By applying Theorem \ref{thm:honda_huang}, it is simple to show that any $2n$-manifold admitting a tight contact Hamiltonian structure also has a tight and convex one.
\end{remark}

\subsection{Doubles} There is a rich source of contact Hamiltonian manifolds that are both tight and convex, by taking certain kinds of doubles of Lioiville domains. 

\begin{example}[Doubles] Let $(W,\lambda)$ denote the Weinstein (or more generally, Liouville) domain and $\Phi:\partial W \to \partial W$ be a contactomorphism of the boundary. The \emph{double} is given by
\[
\Sigma(W,\Phi) = W \underset{\Phi}{\cup} \bar{W}
\]
The contact Hamiltonian structures $\eta = \on{ker}(\lambda)$ on $W$ and $\bar{W}$ glue together to give a convex contact Hamiltonian structure on $\Sigma(W,\Phi)$. Moreover, this structure only depends only on the contact isotopy classes of $\Phi$ up to isotopy through convex contact Hamiltonian structures. \end{example}

\begin{lemma} The double $(\Sigma(W,\Phi),\eta)$ is tight if $\Phi$ extends to an exact symplectomorphism $W \to W$.
\end{lemma}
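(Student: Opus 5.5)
The idea is to show that the contactization of the double embeds, up to isotopy of contact structures, into a contact manifold already known to be tight. Following the definition of overtwisted above, it suffices to find a tight contact manifold containing a neighborhood of the double $\Sigma(W,\Phi)$.

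The natural candidate is the contact boundary $\partial(W \times \mathbb{D}^2)$, or equivalently the contact open book $\on{OB}(W,\Psi)$ with page $W$ and monodromy $\Psi$, where $\Psi:W \to W$ is the exact symplectomorphism extending $\Phi$. First, for the trivial monodromy $\Psi = \on{Id}$, the double $W \cup_{\on{Id}} \bar W$ is the standard convex hypersurface $\{\theta = 0\} \cup \{\theta = \pi\}$ formed by two opposite pages in $\partial(W\times \mathbb{D}^2)$. This manifold is the boundary of the Liouville domain $W \times \mathbb{D}^2$, so it is strongly (indeed exactly) fillable and therefore tight by Borman--Eliashberg--Murphy / Niederkrüger plastikstufe-type arguments, namely that fillable implies tight.

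For general $\Phi$ I would construct an analogous contact manifold adapted to $\Psi$, and in fact I expect the cleaner route to be the following. Regard the mapping torus of $\Psi$ over an arc, and realize $\Sigma(W,\Phi)$ as the union of the page $W\times\{0\}$ and the page $W\times\{\pi\}$ in the open book with monodromy $\on{Id}$. The gluing along the binding $\partial W$ is twisted by $\Phi$. Since $\Psi$ is an exact symplectomorphism with $\Psi^*\lambda = \lambda + dh$, the map $\Psi \times \on{Id}_{\mathbb{D}^2}$, after the usual correction by $h$, is an exact symplectomorphism of $W \times \mathbb{D}^2$ near the relevant half. Applying it to one half of the filling identifies $\partial(W\times\mathbb{D}^2)$ with a contact manifold in which the two pages are glued by $\Phi$ along the binding. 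Concretely, I would cut $W\times \mathbb{D}^2$ along $W \times \{\text{real axis}\}$ and reglue one half by $\Psi\times\on{Id}$. The result is again a Liouville domain, since exactness makes the Liouville forms match up to an exact term that can be interpolated. Its boundary contains $\Sigma(W,\Phi)$ as a convex hypersurface with the correct characteristic foliation.

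The final step is to check that the induced contact Hamiltonian structure on the embedded hypersurface agrees with the doubled structure $\eta$ up to isotopy through convex structures. This follows because the restricted forms are the Liouville forms $\lambda$ on each copy of $W$, and the remark following the definition of the double says $\eta$ depends only on the contact isotopy class. Lemma~\ref{lem:std_nbhd} and Lemma~\ref{lem:stability_for_hypersurfaces} then give a contact embedding of the contactization $C\Sigma$ into the fillable, hence tight, manifold, so $\eta$ is tight.

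I expect the main obstacle to be the regluing step. One must verify that reglued Liouville domains have convex (contact-type) boundary, with corners smoothed, and that the exactness of $\Psi$ is what allows the primitive $h$ to be absorbed. Without exactness the glued symplectic form would not be exact, and the boundary would only be weakly fillable at best.
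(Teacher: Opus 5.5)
Your strategy is correct, and it takes a genuinely different route from the paper. The paper argues through sutured contact homology. By Avdek's theorem, a neighborhood of $\Sigma(W,\Phi)$ has non-vanishing contact homology if and only if the augmentations $\epsilon_\pm\colon A(\partial W)\to\mathbb{Q}$ induced by the fillings $(W,\mathrm{Id})$ and $(W,\Phi)$ are dga-homotopic, and the extension $\Psi$ makes them equal. You use the same basic input, namely that $\Psi$ identifies the filling $(W,\Phi)$ with $(W,\mathrm{Id})$, but for a different purpose. You reduce to the identity double, realize it as two opposite pages in $\partial(W\times\mathbb{D}^2)$, and conclude via Lemma \ref{lem:std_nbhd} and the fact that exactly fillable contact manifolds are tight (Borman--Eliashberg--Murphy together with the plastikstufe/bLob filling obstructions). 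Your route needs no sutured contact homology or augmentation theory. The paper's route gives the stronger conclusion that the neighborhood has non-vanishing contact homology, and it is an instance of a criterion that also applies when $\epsilon_+\simeq\epsilon_-$ without $\Phi$ extending.

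Your concrete regluing, however, puts the twisted double in the wrong place. When you cut along $W\times[-1,1]$ (the real axis), the pages at angles $0$ and $\pi$ make up exactly $\partial(W\times[-1,1])$. They therefore lie on the cut itself, and in the reglued boundary they are still joined along the binding by the identity. The union that is glued by $\Phi$ is a pair of pages lying in different halves, e.g. at angles $\pm\pi/2$.

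More importantly, the regluing is not the obstacle you expect, because $\mathrm{Id}\cup(\Psi\times\mathrm{Id})$ identifies the reglued domain with $W\times\mathbb{D}^2$ itself. Said directly: $\mathrm{Id}_W\cup\Psi^{-1}$ is a diffeomorphism $\Sigma(W,\Phi)\to\Sigma(W,\mathrm{Id})$. It carries $\eta$ to the double with $\lambda$ on one copy and $\Psi^*\lambda=\lambda+dh$ on the other. Since $\Psi|_{\partial W}=\Phi$ is a contactomorphism, each $\lambda+t\,dh$ restricts on $\partial W$ to a positive multiple of $\lambda|_{\partial W}$. So these are Liouville forms with outward-pointing Liouville field, and they give an isotopy through convex structures to the standard identity double. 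That isotopy, plus the fillability of $\partial(W\times\mathbb{D}^2)$, is the whole argument.
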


\begin{proof} This follows from the fact that the contact homology of a convex sutured neighborhood of $\Sigma(W,\Phi)$ is non-vanishing. Indeed, let $\Gamma = \partial W$ and consider the augmentations
\[
\epsilon_+:A(\Gamma) \to \Q \qquad\text{and}\qquad \epsilon_-:A(\Gamma) \to \Q
\]
induced by the fillings $(W,\on{Id})$ and $(W,\Phi)$. Here it is key to specify the precise identifications $\on{Id}$ and $\Phi$ of $\Gamma$ with the contact boundaries of the fillings in question. By Avdek \cite[Thm 1.1.1]{a2023}, the contact homology of a neighborhood of $\Sigma(W,\Phi)$ is non-vanishing if and only if $\epsilon_+$ is dga-homotopic to $\epsilon_-$. In the case where $\Phi$ extends to a symplectomorphism of $W$, one can deduce that in fact $\epsilon_+ = \epsilon_-$ for appropriate choice of data (e.g. almost complex structures, Kuranishi data, etc). This yields the desired result.\end{proof}

\noindent In contrast to the above result, it is also possible for a double $\Sigma(W,\Phi)$ to be overtwisted. Indeed, this occurs in the case of an overtwisted bypass as discussed by Honda-Huang \cite{hh2018}. 

\vspace{3pt}

It is interesting to ask if the overtwistedness of a double can be used to detect exotic symplectic phenomena. In particular, we pose the following question.

\begin{question}[Overtwisted Doubles] \label{qu:overtwisted_doubles} Does there exist a of a Liouville $2n$-manifold $W$ such that
\[
\Phi \text{ extends as a diffeomorphism over $W$}\qquad\text{and}\qquad \Sigma(W,\Phi) \text{ is overtwisted}
\]
\end{question}

\noindent Any contactomorphism as in Question \ref{qu:overtwisted_doubles} would represent a type of relative exotic behavior distinguishing the symplectic and smooth categories.

\subsection{Convexity Criteria} The Morse-Smale criterion for convexity is an essential ingredient in the proofs in Section \ref{subsec:convex_genericity} and \ref{subsec:convex_density_higher_d}. This motivates the following problem.

\begin{problem}[Convexity Criteria] Give (necessary and sufficient, or generic) criteria, via the dynamics of the characteristic foliation, for a contact Hamiltonian manifold $(\Sigma,\eta)$ to be convex.
\end{problem}

\begin{remark} Here a criterion is \emph{generic} if the criterion implies convexity for a $C^2$-comeager subset of all contact Hamiltonian structures.
\end{remark}

\noindent The Morse-Smale criterion in Theorem \ref{thm:MS_criterion} for convexity is an example of a sufficient convexity criterion, which is generically necessary and sufficient in dimension two. Similarly, the gradient-like condition is generically sufficient (since the Smale property is $C^\infty$-generic). We expect that more general criteria in higher dimensions will many interesting applications.

\vspace{3pt}

In forthcoming work, the author will present candidate criteria for convexity that generalize the Morse-Smale criterion via Conley theory \cite{conley1978isolated}. Let us sketch some of these results. Recall that the \emph{chain recurrent set} of a vector field $Z$ on a closed manifold $\Sigma$ is a closed invariant set
\[
\on{CR}(Z) \subset \Sigma
\]
consisting of \emph{chain recurrent} points. Chain recurrence is the weakest possible type of recurrence, generalizing periodicity and non-wandering. The chain recurrent set decomposes into connected \emph{chain components} $\Lambda$. The chain components possess a natural ordering $\limitsto$ where $\Lambda \limitsto \Xi$ if the chain component $\Lambda$ is there is a generalized heteroclinic trajectory from $\Lambda$ to $\Xi$. 

\begin{definition} A chain component $\Lambda$ of a vector field $Z$ is \emph{positive} if there exists a volume form $\mu$ with positive divergence along $\Lambda$. We define \emph{negative} chain components similarly.
\end{definition}

\begin{remark} There is actually a formulation of positive and negative chain components coming from ergodic theory. Precisely, there is a well-defined element
\[
\on{div}(Z,\mu)(Z,\on{Leb}) \in M(\Sigma,Z)^\vee 
\]
in the dual space of $Z$-invariant signed measures $M(\Sigma,Z)$. This is defined by integration against the divergence $\on{div}(Z,\mu)(Z,\mu)$ of any volume form with respect to the vector field, where
\[
\mathcal{L}_Z\mu = \on{div}(Z,\mu) \cdot \mu
\]
One may check that a chain component $\Lambda$ is positive if and only
\[
\langle \on{div}(Z,\on{Leb}), \nu\rangle > 0 \qquad\text{for every invariant measure $\nu$ with $\on{supp}(\nu) \subset \Lambda$}
\]\end{remark}

\begin{definition} \label{def:chain_convexity} A vector-field $Z$ is \emph{chain convex} if the following conditions hold.
\begin{itemize}
\item[(a)] Every chain component $\Lambda$ is positive or negative. 
\vspace{2pt}
\item[(b)] There does not exist a negative chain component $\Lambda$ and a positive chain component $\Xi$ such that $\Lambda \limitsto \Xi$. We call this a \emph{retrograde connection}.
\end{itemize}
\end{definition}

\noindent The following result result will establish chain convexity as a necessary and sufficient condition for convexity of hypersurfaces (or contact Hamiltonian structures). A proof uses the existence of certain nice Lyapunov functions along with an inductive argument similar to the Morse-Smale case. The details will appear elsewhere.

\begin{thm} (Forthcoming) A contact Hamiltonian manifold $(\Sigma,\eta)$ is convex if and only if the characteristic foliation $\Sigma_\eta$ is chain convex.
\end{thm}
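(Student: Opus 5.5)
The plan is to prove the two directions separately. Necessity is soft and rests on Lemma \ref{lem:liouville_halves}. Sufficiency is a Conley-theoretic version of the handle-by-handle argument behind Theorem \ref{thm:MS_criterion}.

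\emph{Convex implies chain convex.} Fix a framing function $u$ with dividing set $\Gamma = u^{-1}(0)$ and halves $\Sigma_\pm$. Since the characteristic foliation is transverse to $\Gamma$ and points from $\Sigma_+$ into $\Sigma_-$, a small collar of $\Gamma$ is wandering. For $\delta$ small, any $\delta$-pseudo-orbit also crosses $\Gamma$ only in the direction from $\Sigma_+$ to $\Sigma_-$. Hence every chain component lies in the interior of $\Sigma_+$ or of $\Sigma_-$.

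By Lemma \ref{lem:liouville_halves}, after rescaling there is a contact Hamiltonian form $\lambda_+$ that is positive Liouville on $\Sigma_+$, and similarly $\lambda_-$ is negative Liouville on $\Sigma_-$. Choose the volume form $\mu$ so that the corresponding characteristic field $Z$ satisfies (\ref{eq:characteristic_vectorfield}) with $\lambda_\pm$ near each half; this uses Lemma \ref{lem:spanning_vector_fields} to keep a single $Z$. The divergence identity (\ref{eq:divergence_equation}) then shows that components in $\Sigma_+$ are positive and those in $\Sigma_-$ are negative, which gives condition (a).

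For (b), a retrograde connection $\Lambda \limitsto \Xi$ with $\Lambda \subset \Sigma_-$ and $\Xi \subset \Sigma_+$ would require pseudo-orbits crossing $\Gamma$ from $\Sigma_-$ to $\Sigma_+$. By the same transversality argument this is impossible.

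\emph{Chain convex implies convex.} Write $\mathcal{P}$ and $\mathcal{N}$ for the unions of the positive and negative chain components. Condition (b), together with Conley's fundamental theorem, should yield an attractor--repeller pair $(A,R)$ for the flow with $\mathcal{N} \subset A$ and $\mathcal{P} \subset R$. Concretely, take $R$ to be the set of points chain-reachable backward from $\mathcal{P}$; condition (b) guarantees that this set misses $\mathcal{N}$. This step requires checking that the order relation $\limitsto$ is closed, and I expect it to be manageable using the compactness of $\CR$.

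Conley theory then gives a smooth Lyapunov function $f$ for the pair, strictly decreasing off $\CR$. A regular level $\Gamma = f^{-1}(c)$ is a transversal splitting $\Sigma$ into $\Sigma_+ \supset R$ and $\Sigma_- \supset A$. By Lemma \ref{lem:contact_transversals}, $\Gamma$ is contact with a symplectization collar.

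The goal is to build a contact Hamiltonian form $\lambda_+$ that is positive Liouville on all of $\Sigma_+$, standard of the form $e^s\alpha$ on the collar, and likewise $\lambda_-$ on $\Sigma_-$. Gluing these with $u$ equal to $\pm 1$ away from the collar and $u \approx s$ across it then produces a framing function with $u\,ds + \lambda$ contact. This gluing is the computation already implicit in Lemma \ref{lem:liouville_halves} run in reverse.

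\emph{Main obstacle.} The hard step is constructing the global Liouville form on $\Sigma_+$. Condition (a) only gives, for each positive component separately, a volume form with positive divergence near that component, hence a local positive Liouville form there by the Divergence Criterion. Off the chain recurrent set the dynamics is gradient-like, so one might expect to spread these local forms along the flow. The difficulty is that the chain components may be infinitely many and need not be isolated, and the rescaling functions $F_T$ from the flow must remain controlled along the connecting orbits.

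The approach I would try first is the following. Decompose $\Sigma_+$ using a Lyapunov function $g$ for the chain order restricted to $R$, chosen so that $dg(Z) < 0$ off $\CR$. Then seek a volume form of the shape $\mu = e^{Kg}\mu_0$. Here $\mu_0$ is assembled by a partition of unity from the local positive-divergence forms near each component, using compactness to reduce to finitely many neighborhoods. The constant $K$ is chosen large so that the term $-K\,dg(Z)$ dominates any negative divergence of $\mu_0$ on the wandering region.

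Because $\operatorname{div}(Z, e^{Kg}\mu_0) = \operatorname{div}(Z,\mu_0) + K\,dg(Z)$, the sign has to be arranged carefully: $g$ should increase along the flow in $\Sigma_+$ so that this term helps. Outside a neighborhood of $\CR$, $|dg(Z)|$ is bounded below, which is what makes large $K$ effective there. Inside the neighborhood, the local forms already have positive divergence, and the work is to check that the factor $e^{Kg}$ does not spoil this. This is the step most likely to need the refined Lyapunov functions mentioned in the text, rather than a naive partition-of-unity argument.
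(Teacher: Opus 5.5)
\paragraph{Overall assessment.} Your necessity argument is fine. It is the standard trapping-region argument for $\Sigma_-$ combined with Lemma \ref{lem:liouville_halves}. You also use implicitly that no chain component is both positive and negative; this holds because $\on{div}(Z,e^h\mu)-\on{div}(Z,\mu)=Z(h)$ cannot have a fixed sign on a compact invariant set.

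The architecture of your sufficiency argument is also sound. It is a different, non-inductive route from the one the paper indicates, which is Lyapunov functions plus a handle-by-handle induction as in the Morse--Smale case, with details deferred. In your route everything reduces to two things: a transverse $\Gamma$, and one volume form $\mu$ with $\on{div}(Z,\mu)>0$ on $\Sigma_+$ and $<0$ on $\Sigma_-$ outside a collar. Indeed, writing $\lambda=g\alpha$ in the collar, any $u$ equal to $\pm 1$ off the collar with $u/g$ strictly decreasing along the flow is a framing function. However, two steps do not work as you state them.

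\paragraph{The separation step.} Closedness of $\limitsto$ is not the input you need. If positive components accumulated on a negative one, no attractor--repeller pair could separate $\mathcal{P}$ from $\mathcal{N}$, and (b) does not exclude this. What you need is that $\mathcal{P}$ and $\mathcal{N}$ are closed, and this comes from (a):
\begin{itemize}
\item a volume form with negative divergence along $\Lambda$ has negative divergence on a whole neighborhood $U$;
\item a Hausdorff limit of chain components is chain transitive, so it lies in a single component;
\item hence components accumulating on $\Lambda$ eventually lie in $U$, and are therefore negative.
\end{itemize}
With $\mathcal{P}$ and $\mathcal{N}$ compact, a compactness argument gives a single $\epsilon$ such that no $(\epsilon,1)$-chain runs from $\mathcal{N}$ to $\mathcal{P}$. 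The set $U$ of endpoints of such chains starting in $\mathcal{N}$ is open with $\overline{\Phi_1(U)}\subset U$. Its attractor contains $\mathcal{N}$ and its dual repeller contains $\mathcal{P}$. The set you propose, defined using chains for every $\epsilon$, is only an intersection of repellers and need not itself be one.

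\paragraph{The main gap: constructing $\mu_0$.} This is where the proposal breaks, but not for the reason you give. Write the local forms as $\mu_i=e^{h_i}\mu$. The patched form $e^{\sum_i\rho_ih_i}\mu$ has divergence
\[
\sum_i\rho_i\on{div}(Z,\mu_i)+\sum_i h_i\,Z(\rho_i).
\]
The second term has no sign, since each $h_i$ is only determined up to an additive constant. Nothing in a naive choice of cover prevents it from being nonzero at chain recurrent points where the $\rho_i$ vary.

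The weight $e^{Kg}$ cannot repair this. Indeed, $Z(g)\ge 0$ forces $Z(g)=0$ on the support of every invariant measure, since $\int Z(g)\,d\nu=0$. Conversely, the weight is not the danger you fear: once $Z(g)\ge 0$ everywhere,
\[
\on{div}(Z,e^{Kg}\mu_0)=\on{div}(Z,\mu_0)+K\,Z(g)\ge\on{div}(Z,\mu_0).
\]

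\paragraph{How to fix $\mu_0$.} There are two options.
\begin{itemize}
\item Make the partition of unity flow-invariant near $\on{CR}(Z)$. Take a complete Lyapunov function $L$ and cut values outside the nowhere dense set $L(\on{CR}(Z))$. This splits $\mathcal{P}$ into finitely many Morse sets, each inside one of your good neighborhoods; then take disjoint neighborhoods of them.
\item Avoid patching altogether. By the invariant-measure formulation of (a) and ergodic decomposition, every invariant measure $\nu$ on the compact set $\mathcal{P}$ has $\int\on{div}(Z,\mu)\,d\nu>0$. So for large $T$ the time average $\frac{1}{T}\int_0^T\on{div}(Z,\mu)\circ\Phi_t\,dt$ is positive near $\mathcal{P}$. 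Setting
\[
h_T=\frac{1}{T}\int_0^T(T-t)\,\on{div}(Z,\mu)\circ\Phi_t\,dt,
\]
one has $\on{div}(Z,e^{h_T}\mu)=\frac{1}{T}\int_0^T\on{div}(Z,\mu)\circ\Phi_t\,dt$.
\end{itemize}
With $\mu_0$ in hand, your $e^{Kg}$ step closes the argument. Use a smooth $g$ with $Z(g)\ge 0$ everywhere and $Z(g)>0$ off the Morse sets of a finite Morse decomposition lying inside the neighborhood where $\on{div}(Z,\mu_0)>0$.
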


\noindent In the setting of Morse-Smale flows, the chain components consist of individual hyperbolic singularities and orbits, and the the chain relation is such that $\Lambda \limitsto \Xi$ if and only if there is a heteroclinic trajectory from $\Lambda$ to $\Xi$. The index constraints on positive and negative orbits (Lemmas \ref{lem:hh_divergence} and \ref{lem:breen_divergence}) and the Smale transversality condition then prevent retrograde connections. This is how the Morse-Smale condition is recovered from the above result.

\vspace{3pt}

In general, chain recurrent sets can be quite wild. However, the chain recurrent set of $C^1$-generic diffeomorphisms and flows are known to have more controlled behavior. In particular, there is a (partially conjectural) dichotomy stating that, $C^1$-generically, a chain component is either hyperbolic or contains a $C^1$-robust, coindex one heterodimensional cycle (cf. Bonatti-Diaz \cite{bonatti2008robust}).

\begin{definition}[Heterodimensional Cycle] A \emph{heterodimensional cycle} $C$ is a pair of heteroclinic trajectories $\Lambda \to \Xi$ and $\Lambda \to \Xi$ between a pair of hyperbolic invariant sets $\Lambda$ and $\Xi$. The \emph{coindex} is the difference in the dimension of the stable bundles of $\Lambda$ and $\Xi$.
\end{definition}

\noindent The first constructions of robust heterodimensional cycles were given via blender constructions, and such a cycle is present in the construction at the heart of Theorem \ref{thm:robust_non_convexity}. Moreover, if such a cycle contains a retrograde heteroclinic from a negative hyperbolic invariant set to a positive hyperbolic invariant set, then it provides a robust obstruction to convexity. We conjecture that, $C^2$-generically, such heteroclinic cycles are the universal convexity obstruction.

\begin{conjecture}[Cycle Criterion] A $C^2$-generic contact Hamiltonian structure $\eta$ on $\Sigma$ is either convex or has robust positive-negative heterodimensional cycles. 
\end{conjecture}

\noindent This conjecture would give a necessary and sufficient convexity condition that is $C^2$-generic: the absence of heteroclinic cycles connecting positive and negative hyperbolic invariant sets.

\subsection{Approximation} Another natural problem deals with the sharpness of the density and non-density results of Honda-Huang \cite{hh2019} and the author \cite{jc2024}. We pose the following question.

\begin{question}[$C^1$-Genericity] Are convex hypersurfaces $C^1$-generic in the space of closed hypersurfaces in any contact manifold?
\end{question}

\noindent Note that hypersurfaces with Morse-Smale characteristic foliation are not $C^1$-dense. Indeed, a $C^1$-small perturbation cannot introduce new singularities to a non-singular line field. Thus, the Morse-Smale criterion for convexity is not useful for this approximation problem. On the otherhand, the chain convexity condition in Definition \ref{def:chain_convexity} could potentially be dense. This is an interesting topic for future work.

\subsection{Liouville Flows} Finally, it is interesting to ask about the $C^1$-generic structure of Liouville flows on Liouville manifolds. This is the case of a contact Hamiltonian manifold with only positive regions. 

\begin{question}[Generic Liouville Flows] What properties of the Liouville flow on a Liouville manifold $(W,\lambda)$ are $C^1$-generic with respect to the Liouville form $\lambda$?
\end{question}

\noindent Any structural results in this direction could yield interesting constraints on the topology of Liouville manifolds. We pose the following more specific question, which is of particular interest to the author.

\begin{question}[Axiom A] Is the set of Liouville forms $\lambda$ with Axiom A Liouville flow $C^1$-dense in the space of all Liouville forms on a manifold $W$?
\end{question}
\bibliographystyle{hplain}
\bibliography{standard_bib}

\end{document}